\newtheorem{thm}{Theorem}[section]
\newtheorem{lem}[thm]{Lemma}
\newtheorem{prop}[thm]{Proposition}
\newtheorem{claim}[thm]{Claim}
\newtheorem{fact}[thm]{Fact}
\newtheorem{defn}[thm]{Definition}
\theoremstyle{definition}
\newcommand{\rr}{\mathbb{R}}
\newcommand{\nn}{\mathbb{N}}
\newcommand{\ee}{\varepsilon}
\newcommand{\meg}{\geqslant}
\newcommand{\mik}{\leqslant}
\newcommand{\ave}{\mathbb{E}}
\newcommand{\cala}{\mathcal{A}}
\newcommand{\calb}{\mathcal{B}}
\newcommand{\calf}{\mathcal{F}}
\newcommand{\calg}{\mathcal{G}}
\newcommand{\calh}{\mathcal{H}}
\newcommand{\cals}{\mathcal{S}}
\newcommand{\calp}{\mathcal{P}}
\newcommand{\calq}{\mathcal{Q}}
\newcommand{\calr}{\mathcal{R}}
\newcommand{\bbx}{\boldsymbol{X}}
\newcommand{\bmu}{\boldsymbol{\mu}}
\newcommand{\bbs}{\boldsymbol{\Sigma}}
\newcommand{\bnu}{\boldsymbol{\nu}}
\newcommand{\bpsi}{\boldsymbol{\psi}}
\newcommand{\bfca}{\mathbf{A}}
\newcommand{\bx}{\mathbf{x}}
\newcommand{\bfu}{\mathbf{f}}
\begin{document}

\title{$L_p$ regular sparse hypergraphs}

\author{Pandelis Dodos, Vassilis Kanellopoulos and Thodoris Karageorgos}

\address{Department of Mathematics, University of Athens, Panepistimiopolis 157 84, Athens, Greece}
\email{pdodos@math.uoa.gr}

\address{National Technical University of Athens, Faculty of Applied Sciences,
Department of Mathematics, Zografou Campus, 157 80, Athens, Greece}
\email{bkanel@math.ntua.gr}

\address{Department of Mathematics, University of Athens, Panepistimiopolis 157 84, Athens, Greece}
\email{tkarageo@math.uoa.gr}

\thanks{2010 \textit{Mathematics Subject Classification}: 05C35, 05C65,  46B25, 60G42.}
\thanks{\textit{Key words}: sparse hypergraphs, removal lemma, pseudorandomness.}


\begin{abstract}
We study sparse hypergraphs which satisfy a mild pseudorandomness condition known as \textit{$L_p$ regularity}.
We prove appropriate regularity and counting lemmas, and we extend the relative removal lemma of Tao in this setting.
This answers a question from \cite{BCCZ}.
\end{abstract}

\maketitle

\tableofcontents


\section{Introduction}

\numberwithin{equation}{section}

\noindent 1.1. In this paper we study a family of sparse weighted uniform hypergraphs which satisfy a mild pseudorandomness condition.
Our primary objective is to prove appropriate regularity and counting lemmas, and to extend the relative removal lemma for this class
of hypergraphs. Beside their intrinsic interest, these types of results have found significant applications in number theory and
Ramsey theory. To a large extend, the present paper was motivated by these applications.
\medskip

\noindent 1.2. To put our discussion in a proper context it is useful to recall one of the most well-known
pseudorandomness conditions for graphs, introduced in \cite{Ko,KoR}. Specifically, let $G=(V,E)$ be a finite graph
and let $p\coloneqq |E|/{|V| \choose 2}$ denote the edge density of $G$; the reader should have in mind that we are
interested in the case where $G$ is sparse, that is, in the regime $p=o(|V|^2)$. Also let $D\meg 1$ and $0<\gamma\mik 1$,
and recall that the graph $G$ is said to be \emph{$(D,\gamma)$-bounded} provided that for every pair $X,Y$ of disjoint
subsets of $V$ with $|X|\meg \gamma |V|$ and $|Y|\meg \gamma |V|$, we have $|E\cap (X\times Y)|\mik Dp |X|\, |Y|$. This
natural condition expresses the fact that the graph $G$ has ``no large dense spots"\!, and is satisfied by several
models of sparse random graphs (see, e.g., \cite{BR}).
\medskip

\noindent 1.3. The notion of pseudorandomness which we will consider in this paper is an extension of the aforementioned boundedness
hypothesis. It is relevant also in the continuous setting---e.g., in the context of graph limits \cite{L}---and as such we will be
working  with the following measure theoretic structures. As usual, for every positive integer $n$ we set $[n]\coloneqq\{1,\dots,n\}$.
\begin{defn}[\cite{Tao1}] \label{d1.1}
A \emph{hypergraph system} is a triple
\begin{equation} \label{e1.1}
\mathscr{H}=(n,\langle(X_i,\Sigma_i,\mu_i):i\in [n]\rangle,\calh)
\end{equation}
where $n$ is a positive integer, $\langle(X_i,\Sigma_i,\mu_i): i\in [n]\rangle$ is a finite sequence of probability spaces
and $\calh$ is a hypergraph on $[n]$. If\, $\calh$ is $r$-uniform, then $\mathscr{H}$ will be called
an \emph{$r$-uniform hypergraph system}. On the other hand, if $\mu_i(A)\mik \eta$ for every $i\in [n]$ and every
atom\footnote{Recall that an \emph{atom} of a probability space is a measurable event with positive probability which contains
no measurable event with smaller positive probability.} $A$ of $(X_i,\Sigma_i,\mu_i)$, then $\mathscr{H}$ will be called
\emph{$\eta$-nonatomic}.
\end{defn}
Observe that a hypergraph system which consists of discrete probability spaces is just a vertex-weighted $n$-partite hypergraph.
Also note that if $\mu$ is the uniform probability measure on a nonempty finite set $X$, then every atom of $(X,\mu)$
has probability $|X|^{-1}$. Therefore, $\eta$-nonatomicity is just an asymptotic condition.

At this point we need to recall some basic definitions. Given a hypergraph system
$\mathscr{H}=(n,\langle(X_i,\Sigma_i,\mu_i):i\in [n]\rangle,\calh)$ by $(\bbx,\bbs,\bmu)$ we denote the product of the spaces
$\langle (X_i,\Sigma_i,\mu_i): i\in [n]\rangle$. More generally, for every nonempty $e\subseteq [n]$ let $(\bbx_e,\bbs_e,\bmu_e)$
be the product of the spaces $\langle (X_i,\Sigma_i,\mu_i): i\in e\rangle$ and observe that the $\sigma$-algebra $\bbs_e$ can be
``lifted" to $\bbx$ by setting
\begin{equation} \label{e1.2}
\calb_e=\big\{ \pi^{-1}_e(\mathbf{A}): \mathbf{A}\in\bbs_e\big\}
\end{equation}
where $\pi_e\colon \bbx\to\bbx_e$ is the natural projection. If, in addition, $|e|\meg 2$, then let
$\partial e=\{e'\subseteq e: |e'|=|e|-1\}$ and define
\begin{equation} \label{e1.3}
\cals_{\partial e}\coloneqq \Big\{ \bigcap_{e'\in\partial e} A_{e'}: A_{e'}\in\calb_{e'} \text{ for every } e'\in\partial e\Big\}
\subseteq \calb_e.
\end{equation}
The family $\cals_{\partial e}$ is very easy to grasp if $n=2$ and $e=\{1,2\}$. It consists of all measurable
rectangles of $X_1\times X_2$, that is,
\[ \cals_{\partial e}=\{A\times B: A\in\Sigma_1 \text{ and } B\in\Sigma_2\}.\]
On the other hand, if $n=3$ and $e=\{1,2,3\}$, then observe that
\[ \cals_{\partial e} = \{ A\cap B\cap C: A\in\calb_{\{1,2\}}, B\in\calb_{\{2,3\}} \text{ and } C\in\calb_{\{1,3\}}\} \]
where $\calb_{\{1,2\}}=\{\mathbf{A}\times X_3: \mathbf{A}\subseteq X_1\times X_2 \text{ is measurable}\}$,
and similarly for $\calb_{\{2,3\}}$ and $\calb_{\{1,3\}}$. In general, notice that every member of $\cals_{\partial e}$
is the intersection of events which depend on fewer coordinates, and so it is useful to view the elements of $\cals_{\partial e}$
as ``lower-complexity" events. Much of the interest in these families of sets stems from the fact that they are ``characteristic"
for a number of structural properties of hypergraphs (see, e.g., \cite{DK2,TV}).
\medskip

\noindent 1.4. We are now in a position to introduce the class of weighted hypergraphs which we will consider in this paper.
It was recently defined\footnote{Actually, in \cite{BCCZ} only $L_p$ regular graphs were considered, but it
is straightforward to extend the definition to hypergraphs.} in \cite{BCCZ}.
\begin{defn} \label{d1.2}
Let $\mathscr{H}=(n,\langle(X_i,\Sigma_i,\mu_i):i\in [n]\rangle,\calh)$ be a hypergraph system. Also let $C,\eta>0$ and
$1\mik p\mik \infty$, and let $e\in \calh$ with $|e|\meg 2$. A random variable $f\in L_1(\bbx,\calb_e,\bmu)$ is said to be
\emph{$(C,\eta,p)$-regular} $($or simply \emph{$L_p$ regular} if\, $C$ and $\eta$ are understood$)$ provided that for every
partition $\calp$ of\, $\bbx$ with $\calp\subseteq \cals_{\partial e}$ and $\bmu(A)\meg \eta$ for every $A\in\calp$ we have
\begin{equation} \label{e1.4}
\|\ave(f\, | \, \cala_{\calp})\|_{L_p} \mik C
\end{equation}
where $\cala_{\calp}$ is the $\sigma$-algebra on\, $\bbx$ generated by $\calp$ and\, $\ave(f\, | \, \cala_{\calp})$ is the
conditional expectation of $f$ with respect to $\cala_{\calp}$.
\end{defn}
The main point in Definition \ref{d1.2} is that, even though we make no assumption on the existence of moments, an $L_p$ regular
random variable \emph{behaves} like a function in $L_p$ as long as we project it on sufficiently ``nice" $\sigma$-algebras of $\bbx$.

Notice that $L_p$ regularity becomes weaker as $p$ becomes smaller. In particular, the case ``$p=1$" is essentially of no interest
since every integrable random variable is $L_1$ regular. On the other hand, in the context of graphs, $L_{\infty}$ regularity reduces
to the boundedness hypothesis that we mentioned above. Indeed, it is not hard to see that a bipartite graph $G=(V_1,V_2,E)$ with
edge density $p$ is $(D,\gamma)\text{-bounded}$ for some $D,\gamma$ if and only if the random variable $\mathbf{1}_E/p$ is $L_{\infty}$
regular. (Here, we view $V_1$ and $V_2$ as discrete probability spaces equipped with the uniform probability measures.) For weighted
hypergraphs, however, $L_{\infty}$ regularity is a more subtle property. It is implied by the pseudorandomness conditions appearing
in the work of Green and Tao \cite{GT1,GT2}, though closer to the spirit of this paper is the work of Tao in \cite{Tao2}. We will
comment further on these connections in Subsection 2.2.

Between the above extremes there is a large class of sparse weighted hypergraphs (namely, those which are $L_p$ regular for some
$1<p<\infty$) which are, as we shall see, particularly well-behaved.


\section{Statement of the main results}

\numberwithin{equation}{section}

\noindent 2.1. \textbf{Regularity lemma.} We first recall some pieces of notation and terminology.
We say that a function $F\colon \nn\to\rr$ is a \emph{growth function} provided that: (i) $F$ is increasing, and (ii) $F(n)\meg n+1$
for every $n\in\nn$. Moreover, for every hypergraph system $\mathscr{H}=(n,\langle(X_i,\Sigma_i,\mu_i):i\in [n]\rangle,\calh)$,
every $e\in \calh$ with $|e|\meg 2$ and every $f\in L_1(\bbx,\calb_e,\bmu)$ let
\begin{equation} \label{e2.1}
\|f\|_{\cals_{\partial e}} =\sup\Big\{ \big| \int_A f\, d\bmu \big|: A\in\cals_{\partial e}\Big\}
\end{equation}
where $\cals_{\partial e}$ is as in \eqref{e1.3}. The quantity $\|f\|_{\cals_{\partial e}}$ is a measure of ``uniformity" of $f$
and it has been extensively used in extremal combinatorics (see, e.g., \cite{FK,Go1,L,Tao1}). It is sometimes referred to in the
literature as the \emph{cut norm} of $f$.

The following decomposition of $L_p$ regular random variables is the first main result of this paper.
As in Definition \ref{d1.2}, for every partition $\calp$ of a nonempty set $X$ by $\cala_{\calp}$
we denote the $\sigma$-algebra on $X$ generated by $\calp$.
\begin{thm} \label{t2.1}
Let $n,r\in\nn$ with $n\meg r\meg 2$, and let $C>0$ and $1<p\mik \infty$. Also let $F\colon \nn\to\rr$ be a growth function and $0<\sigma\mik 1$.
Then there exists a positive integer\, $\mathrm{Reg}=\mathrm{Reg}(n,r,C,p,F,\sigma)$ such that, setting\, $\eta=1/\mathrm{Reg}$, the following holds.
Let $\mathscr{H}=(n,\langle(X_i,\Sigma_i,\mu_i):i\in [n]\rangle,\calh)$ be an $\eta$-nonatomic, $r$-uniform hypergraph system.
For every $e\in\calh$ let $f_e\in L_1(\bbx,\calb_e,\bmu)$ be nonnegative and $(C,\eta,p)$-regular. Then there exist
\begin{enumerate}
\item[(a)] a positive integer $M$ with $M\mik \mathrm{Reg}$,
\item[(b)] for every $e\in\calh$ a partition $\calp_{\!e}$ of\, $\bbx$ with $\calp_{\!e}\subseteq \cals_{\partial e}$ and $\bmu(A)\meg 1/M$
for every $A\in\calp_{\!e}$, and
\item[(c)] for every $e\in\calh$ a refinement $\calq_e$ of\, $\calp_{\!e}$ with $\calq_e\subseteq \cals_{\partial e}$ and $\bmu(A)\meg \eta$
for every $A\in\calq_e$,
\end{enumerate}
such that for every $e\in\calh$, writing $f_e=f^e_{\mathrm{str}}+ f^e_{\mathrm{err}}+f^e_{\mathrm{unf}}$ with
\begin{equation} \label{e2.2}
f^e_{\mathrm{str}}=\ave(f_e\,|\,\cala_{\calp_{\!e}}), \
f^e_{\mathrm{err}}=\ave(f_e\,|\,\cala_{\calq_e})-\ave(f_e\,|\,\cala_{\calp_{\!e}}), \
f^e_{\mathrm{unf}}=f_e-\ave(f_e\,|\,\cala_{\calq_e}),
\end{equation}
we have the estimates
\begin{equation} \label{e2.3}
\|f^e_{\mathrm{str}}\|_{L_p} \mik C, \ \
\|f^e_{\mathrm{err}}\|_{L_{p^{\dagger}}}\mik \sigma \ \text{ and } \
\|f^e_{\mathrm{unf}}\|_{\cals_{\partial e}}\mik \frac{1}{F(M)}
\end{equation}
where $p^{\dagger}=\min\{2,p\}$.
\end{thm}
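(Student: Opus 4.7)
The plan is to prove Theorem~\ref{t2.1} by a variant of the Lovász--Szemerédi energy increment argument, carried out independently for each $e\in\calh$ (the decompositions for distinct edges do not interact, so I would work edge by edge and at the end take $M$ to be large enough to handle all of them). For fixed $e\in\calh$, consider the conditional $L_2$-energy $\Phi(\calp)=\|\ave(f_e\,|\,\cala_{\calp})\|_{L_2}^2$. The heart of the matter is the following single-refinement lemma: if $\calp\subseteq \cals_{\partial e}$ is a partition of $\bbx$ with $\|f_e-\ave(f_e\,|\,\cala_{\calp})\|_{\cals_{\partial e}}>\delta$, then a witnessing set $A=\bigcap_{e'\in\partial e}A_{e'}\in\cals_{\partial e}$ exists, and splitting each atom of $\calp$ by all the $A_{e'}$ and $A_{e'}^c$ produces a refinement $\calp'$ with $\calp'\subseteq\cals_{\partial e}$ (since each $\calb_{e'}$ is a $\sigma$-algebra), $|\calp'|\mik 2^r|\calp|$, and, by martingale orthogonality combined with Cauchy--Schwarz applied to $\mathbf{1}_A$,
\[
\Phi(\calp')-\Phi(\calp)=\|\ave(f_e\,|\,\cala_{\calp'})-\ave(f_e\,|\,\cala_{\calp})\|_{L_2}^2\meg \delta^2.
\]

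To make the iteration terminate I need an a priori bound on $\Phi$. For $p\meg 2$ this is direct: $L_p$-regularity gives $\|\ave(f_e\,|\,\cala_{\calp})\|_{L_2}\mik\|\ave(f_e\,|\,\cala_{\calp})\|_{L_p}\mik C$ whenever all atoms of $\calp$ have measure $\meg\eta$. For $1<p<2$ the $L_p$ bound does not dominate the $L_2$ norm, so I would combine it with the crude estimate $|\ave(f_e\,|\,\cala_{\calp})|\mik \|f_e\|_{L_1}/\eta\mik C/\eta$ (the $L_1$ bound on $f_e$ comes from applying $L_p$-regularity to the trivial partition) and interpolate via Hölder to obtain $\Phi(\calp)\mik (C/\eta)^{2-p}C^p$. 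Hence in both regimes $\Phi(\calp)\mik E$ for some constant $E=E(C,p,\eta)$.

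For the three-part decomposition I would build a tower of partitions. Starting from the trivial partition, construct inductively $\calr_0\subseteq\calr_1\subseteq\cdots\subseteq\calr_K$ in $\cals_{\partial e}$ so that, for each $k$, $\calr_{k+1}$ is reached from $\calr_k$ by iterating the refinement lemma until $\|f_e-\ave(f_e\,|\,\cala_{\calr_{k+1}})\|_{\cals_{\partial e}}\mik 1/F(|\calr_k|)$. Since $\Phi(\calr_k)$ is non-decreasing in $k$ and bounded by $E$, a pigeonhole argument with $K=\lceil E/\sigma^2\rceil+1$ locates an index $k$ with $\Phi(\calr_{k+1})-\Phi(\calr_k)\mik \sigma^2$. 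Setting $\calp_{\!e}=\calr_k$ and $\calq_e=\calr_{k+1}$, and taking $M$ to be a common upper bound on the reciprocal of the minimum atom measure of all the $\calp_{\!e}$'s, the required estimates follow: $\|f^e_{\mathrm{str}}\|_{L_p}\mik C$ by $L_p$-regularity (atoms of $\calp_{\!e}$ have measure $\meg 1/M\meg\eta$); $\|f^e_{\mathrm{err}}\|_{L_2}^2=\Phi(\calq_e)-\Phi(\calp_{\!e})\mik\sigma^2$ by orthogonality of the martingale increment, whence $\|f^e_{\mathrm{err}}\|_{L_{p^\dagger}}\mik\|f^e_{\mathrm{err}}\|_{L_2}\mik\sigma$ since $p^\dagger\mik 2$ on a probability space; and $\|f^e_{\mathrm{unf}}\|_{\cals_{\partial e}}\mik 1/F(|\calp_{\!e}|)\mik 1/F(M)$.

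The main technical obstacle I anticipate is ensuring that the atoms of the final partition $\calq_e$ all retain measure at least $\eta$, since the refinement step cuts by arbitrary sets in $\calb_{e'}$ which can produce arbitrarily small pieces. I would address this by replacing each witness $A_{e'}\in\calb_{e'}$ by an approximation that is measurable with respect to a pre-chosen finite partition of the factor space $(X_i,\Sigma_i,\mu_i)$ into pieces of measure close to $\eta$ (whose existence is furnished by $\eta$-nonatomicity), at a cost that can be absorbed into the cut-norm threshold. A secondary subtlety is that the energy bound $E(C,p,\eta)$ depends on $\eta=1/\mathrm{Reg}$ when $p<2$, so the definition of $\mathrm{Reg}(n,r,C,p,F,\sigma)$ is self-referential; however, the resulting inequality takes the form $\mathrm{Reg}^{p-1}\meg (\text{quantity depending only on }C,p,F,\sigma)$, which can always be solved since $p>1$.
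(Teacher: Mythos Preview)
Your outline is essentially the paper's argument when $p\meg 2$ (there $p^{\dagger}=2$ and the paper's martingale inequality reduces to $L_2$ orthogonality), but for $1<p<2$ there is a genuine gap, and your atom-size fix is also not adequate as stated.

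For $1<p<2$, your $L_2$-energy ceiling $E=E(C,p,\eta)\approx C^2\eta^{-(2-p)}=C^2\,\mathrm{Reg}^{\,2-p}$ depends on $\eta=1/\mathrm{Reg}$, and this dependence propagates both to the number $K\approx E/\sigma^2$ of outer pigeonhole levels and to the number $\mik E\cdot F(|\calr_k|)^2$ of inner refinements needed to pass from $\calr_k$ to $\calr_{k+1}$. The size of $\calr_K$ is therefore a tower whose height and whose exponents at each level all grow like $\mathrm{Reg}^{\,2-p}$. The constraint you must solve is \emph{not} of the form $\mathrm{Reg}^{\,p-1}\meg(\text{something independent of }\mathrm{Reg})$; it is rather $\mathrm{Reg}\meg \text{(tower of height }\approx\mathrm{Reg}^{\,2-p}\text{)}$, which has no solution. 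The paper breaks this circularity by abandoning the $L_2$ energy and instead using the uniform-convexity estimate for martingale difference sequences in $L_{p^{\dagger}}$ (Proposition~\ref{p3.1}): $\sum_i\|d_i\|_{L_{p^{\dagger}}}^2\mik(p^{\dagger}-1)^{-1}\big\|\sum_id_i\big\|_{L_{p^{\dagger}}}^2$. Since $p^{\dagger}\mik p$, $L_p$-regularity gives $\big\|\sum_id_i\big\|_{L_{p^{\dagger}}}\mik\|\ave(f_e\,|\,\cala_{\calp_N})\|_{L_p}\mik C$ \emph{independently of $\eta$}, so the number of increments is bounded purely in terms of $C,p,\sigma$ and the iteration closes.

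Your atom-size repair also needs more than you indicate. The witnesses $A_{e'}$ live in $\calb_{e'}$, a $\sigma$-algebra on an $(r-1)$-fold product, so approximating them by unions of boxes coming from a single coarse partition of each factor $X_i$ does not give the control you want. More seriously, because $f_e$ is merely in $L_1$, a small measure $\bmu(A\triangle B)$ does not by itself bound $\int_{A\triangle B}f_e\,d\bmu$; one needs precisely the H\"older-type inequality of Proposition~\ref{p4.1}. The paper handles this via Lemma~\ref{l5.1}: given $A\in\cals_{\partial e}$ and a partition $\calp$, it produces a refinement $\calq$ with $\iota(\calq)\meg(\vartheta\cdot\iota(\calp))^q$ and a $B\in\cala_{\calq}$ with $\int_{A\triangle B}f_e\,d\bmu$ controlled. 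This is what makes the atom sizes shrink in a predictable, $\eta$-independent way at each step.
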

Note that, unless $p=\infty$, the structured part of the above decomposition (namely, the function $f^e_{\mathrm{str}}$)
is not uniformly bounded. This is an intrinsic feature of $L_p$ regular hypergraphs, and it is an important difference
between $\text{Theorem \ref{t2.1}}$ and several related results (see, e.g., \cite{BR,CCF,CFZ,Go2,GT1,Ko,RTTV,Tao3,TZ1}).
Observe, however, that, by part (b) and \eqref{e2.3}, one has a very good control on the correlation between $f^e_{\mathrm{str}}$
and $f^{e'}_{\mathrm{unf}}$ for every $e,e'\in\calh$. Hence, by appropriately selecting the growth function $F$,
we can force the function $f^e_{\mathrm{str}}$ to behave like a bounded function for many practical purposes.

The proof of Theorem \ref{t2.1} is given in Section 5 and relies in an essential way on a uniform convexity estimate for martingale
difference sequences in $L_p$ spaces (Proposition \ref{p3.1} in the main text), and on a H\"{o}lder-type inequality for $L_p$ regular
random variables which is presented in Section 4. The argument is technically demanding mainly because, on the one hand, we aim to
the strong norm estimates in \eqref{e2.3} and, on the other hand, we need to ensure that the cells of each partition $\calp_{\!e}$
are fairly large. The reader is also referred to \cite{DKK1} where a similar method is used in a simpler (but closely related) context,
as well as to \cite{BK} where the algorithmic aspects of the proof of Theorem \ref{t2.1} are analysed and applications are given for
sparse instances of NP-hard problems.
\medskip

\noindent 2.2. \textbf{Relative counting and removal lemmas.} The second part of this paper deals with the existence
of counting and removal lemmas for $L_p$ regular hypergraphs. Firstly, it is necessary to note that $L_p$ regularity
alone is not strong enough to yield results of this type, even if we restrict our attention to $L_{\infty}$ regular
graphs. It is in fact a rather delicate question to find natural conditions which imply sparse versions of the counting
and removal lemmas\footnote{Recently, there was progress in this direction for sparse random graphs; see \cite{ST} and
the references therein.}. In this direction, a powerful transference method was introduced by Green and Tao \cite{GT1,GT2}
which can be applied to relatively dense subsets of several sufficiently pseudorandom combinatorial structures.
For hypergraph systems, the method was developed by Tao in \cite{Tao2} who obtained a ``relative removal lemma"
which essentially covers the case of $L_{\infty}$ regular hypergraphs. Our second main result extends the relative
removal lemma in the whole range of admissible $p$'s.
\begin{thm} \label{t2.2}
Let $n,r\in\nn$ with $n\meg r\meg 2$, and let $C\meg 1$ and $1<p\mik\infty$. Then for every $0<\ee\mik 1$ there exist two strictly
positive constants $\eta=\eta(n,r,C,p,\ee)$ and $\delta=\delta(n,r,C,p,\ee)$, and a positive integer $k=k(n,r,C,p,\ee)$ with the
following property. Let $\mathscr{H}=(n, \langle(X_i,\Sigma_i,\mu_i):i\in [n]\rangle,\calh)$ be an $\eta$-nonatomic, $r$-uniform
hypergraph system and let $\langle \nu_e:e\in\calh\rangle$ be a $(C,\eta,p)$-pseudorandom family. For every $e\in \calh$ let
$f_e\in L_1(\bbx,\calb_e,\bmu)$ with $0\mik f_e\mik \nu_e$ such that
\begin{equation} \label{e2.4}
\int \prod_{e\in\calh} f_e\, d\bmu \mik \delta.
\end{equation}
Then for every $e\in \calh$ there exists $F_e\in\calb_e$ with
\begin{equation} \label{e2.5}
\int_{\bbx\setminus F_e} f_e\, d\bmu\mik\ee \ \text{ and } \ \bigcap_{e\in \calh} F_e=\emptyset.
\end{equation}
Moreover, there exists a collection $\langle\calp_{\! e'}: e'\subseteq e \text{ for some } e\in\calh \rangle$
of partitions of\, $\bbx$ such that: \emph{(i)} $\calp_{\! e'}\subseteq \calb_{e'}$ and $|\calp_{\! e'}|\mik k$
for every $e'\subseteq e\in\calh$, and \emph{(ii)} for every $e\in\calh$ the set $F_e$ belongs to the algebra
generated by the family $\bigcup_{e'\varsubsetneq e} \calp_{\! e'}$.
\end{thm}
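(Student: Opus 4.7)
The overall strategy is \emph{transference}: regularize each $f_e$ into a low-complexity structured component plus negligible remainders, use a counting lemma to reduce the hypothesis \eqref{e2.4} to a statement about the structured parts alone, and finally apply a classical dense hypergraph removal lemma to the resulting simple functions, lifting the outcome back to the $f_e$'s.

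\emph{Regularization and counting.} Since $0\mik f_e\mik \nu_e$ and each $\nu_e$ is $(C,\eta,p)$-regular, the $f_e$'s are themselves $(C,\eta,p)$-regular. I would apply Theorem \ref{t2.1} to $\langle f_e:e\in\calh\rangle$ with an auxiliary constant $\sigma>0$ and a growth function $F\colon\nn\to\rr$, both to be chosen in terms of $\ee$, $n$, $r$, $p$ and $C$. This produces a scale $M$, partitions $\calp_{\!e}\subseteq\calq_e\subseteq\cals_{\partial e}$, and decompositions $f_e=f^e_{\mathrm{str}}+f^e_{\mathrm{err}}+f^e_{\mathrm{unf}}$ satisfying \eqref{e2.3}. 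Expanding $\int\prod_{e\in\calh} f_e\,d\bmu$ into $3^{|\calh|}$ terms, I would bound every term containing at least one uniform factor $f^{e_0}_{\mathrm{unf}}$ by a generalized von Neumann inequality, using the cut-norm bound $\|f^{e_0}_{\mathrm{unf}}\|_{\cals_{\partial e_0}}\mik 1/F(M)$ on that factor and the pseudorandom majorization $\mik \nu_e$ on the others. The remaining terms involving an error factor $f^{e_0}_{\mathrm{err}}$ (but no uniform ones) are estimated by the H\"{o}lder-type inequality for $L_p$ regular random variables from Section 4, exploiting the small $L_{p^{\dagger}}$ norm $\sigma$. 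Summing,
\begin{equation*}
\int\prod_{e\in\calh}f^e_{\mathrm{str}}\,d\bmu\mik \delta+\tfrac{1}{4}\ee^{|\calh|},
\end{equation*}
provided $\sigma$ and $1/F(M)$ are sufficiently small.

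\emph{Discretization and dense removal.} Each $f^e_{\mathrm{str}}$ is constant on the cells of $\calp_{\!e}\subseteq\cals_{\partial e}$, and each such cell is an intersection of sets drawn from $\calb_{e'}$, $e'\in\partial e$. Harvesting these building blocks across all $e\in\calh$ yields, for every pair $e'\subsetneq e\in\calh$, a partition $\calp_{\!e'}\subseteq\calb_{e'}$ with $|\calp_{\!e'}|\mik M$. To compensate for the fact that $f^e_{\mathrm{str}}$ is only $L_p$-bounded, I would truncate at height $\Lambda=\Lambda(\ee,C,p)$ by setting $g_e=\min(f^e_{\mathrm{str}},\Lambda)$; the bound $\|f^e_{\mathrm{str}}\|_{L_p}\mik C$ yields $\int(f^e_{\mathrm{str}}-g_e)\,d\bmu\mik C^p\Lambda^{1-p}$, which is $\mik \ee/4$ for $\Lambda$ large. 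The $g_e$'s are bounded simple functions on the finite algebra generated by the partitions $\calp_{\!e'}$, so the classical (dense) hypergraph removal lemma applied inside this finite structure produces sets $F_e$ in the algebra generated by $\bigcup_{e'\subsetneq e}\calp_{\!e'}$ with $\bigcap_{e\in\calh}F_e=\emptyset$ and $\int_{\bbx\setminus F_e}g_e\,d\bmu\mik \ee/4$.

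\emph{Lifting and main obstacle.} For each $e\in\calh$,
\begin{equation*}
\int_{\bbx\setminus F_e}f_e\,d\bmu\mik \int_{\bbx\setminus F_e}g_e\,d\bmu+\int(f^e_{\mathrm{str}}-g_e)\,d\bmu+\|f^e_{\mathrm{err}}\|_{L_1}+\Big|\int_{\bbx\setminus F_e}f^e_{\mathrm{unf}}\,d\bmu\Big|,
\end{equation*}
and each of these four terms can be made $\mik \ee/4$ by the previous steps; the last is controlled by decomposing $\bbx\setminus F_e$ into the atoms of $\cals_{\partial e}$ inside the finite algebra (at most some $N(M,r,|\calh|)$ of them) and applying the cut-norm estimate atom-wise. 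The main obstacle is the quantitative orchestration of these choices: the growth function $F$ must outrun $N(M,r,|\calh|)$, whose value depends on $M$---and $M$ is itself furnished by Theorem \ref{t2.1} in terms of $F$. This circular dependency is handled in the standard way by selecting $F$ sufficiently fast-growing, using the quantitative bound produced by the dense removal lemma at scale $M$; this is precisely the reason Theorem \ref{t2.1} is formulated with a growth-function parameter rather than a fixed error threshold. The unboundedness of $f^e_{\mathrm{str}}$ for finite $p$---which necessitates the truncation step and the $L_p$-tail estimate---is the key new feature separating this argument from Tao's $L_\infty$ treatment in \cite{Tao2}.
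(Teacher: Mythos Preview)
Your overall transference strategy is right, and you correctly identify the unboundedness of $f^e_{\mathrm{str}}$ as the new difficulty. But there is a real gap in the counting step, and it stems from performing the truncation too late. When you expand $\int\prod_e f_e$ into $3^{|\calh|}$ terms, a typical mixed term is $\int\prod_e a_e$ with $a_e\in\{f^e_{\mathrm{str}},f^e_{\mathrm{err}},f^e_{\mathrm{unf}}\}$. You propose to control any term containing a factor $f^{e_0}_{\mathrm{unf}}$ by a generalized von Neumann inequality, invoking ``the pseudorandom majorization $\mik\nu_e$ on the others''. However, the \emph{other} factors in such a term are themselves among $f^e_{\mathrm{str}},f^e_{\mathrm{err}},f^e_{\mathrm{unf}}$, none of which is pointwise dominated by $\nu_e$: the structured part is only $L_p$-bounded (hence unbounded when $p<\infty$), while the error and uniform parts are signed. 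Every relative von Neumann/counting statement available here---in particular the paper's Theorem~\ref{t7.1}---needs the remaining factors to satisfy $0\mik\cdot\mik\nu_e$ or $0\mik\cdot\mik\zeta$; your expansion does not deliver this. Likewise, the terms built only from structured and error factors are products of $|\calh|$ functions each merely in $L_p$ (or $L_{p^\dagger}$); Proposition~\ref{p4.1} controls a single $L_p$-regular function on a set in $\cals_{\partial e}$ and says nothing about such products.

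The paper fixes exactly this by truncating \emph{before} counting. One sets $A_e=[f^e_{\mathrm{str}}\mik\zeta]\in\cala_{\calp_{\!e}}$, then $g_e=f_e\cdot\mathbf{1}_{A_e}$ and $h_e=f^e_{\mathrm{str}}\cdot\mathbf{1}_{A_e}$; now $0\mik g_e\mik\nu_e$, $0\mik h_e\mik\zeta$, and Proposition~\ref{p7.2} (which packages your growth-function bookkeeping) gives $\|g_e-h_e\|_{\cals_{\partial e}}\mik\alpha$. The relative counting lemma Theorem~\ref{t7.1} then compares $\int\prod_e g_e$ with $\int\prod_e h_e$ in one stroke---no $3^{|\calh|}$ expansion is needed---and since $g_e\mik f_e$ one gets $\int\prod_e h_e\mik 2\delta$. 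The dense removal (Theorem~\ref{t3.4}) is applied to the level sets $E_e=[h_e\meg\Delta]$ rather than directly to the simple functions, and the lift-back to $f_e$ proceeds as in your final display, using in addition the tail bound $\int_{\bbx\setminus A_e}f_e\,d\bmu\mik(C+1)^p\zeta^{1-p}+\alpha$ from Proposition~\ref{p7.2}. In short: your outline becomes correct once you move the truncation ahead of the counting step and replace the $3^{|\calh|}$ expansion by a single application of Theorem~\ref{t7.1}.
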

Here, a \emph{$(C,\eta,p)$-pseudorandom family} is a collection $\langle\nu_e:e\in\calh\rangle$ of nonnegative random
variables each of which effectively resembles a function $\psi_e\in L_p(\bbx,\calb_e,\bbs)$ with $\|\psi_e\|_{L_p}\mik C$
(in particular, each $\nu_e$ is $L_p$ regular). The precise definition is given in Section 6, along with some basic properties.
If $\psi_e=1$ for every $e\in\calh$, then this definition coincides\footnote{Actually, it is weaker than the linear forms condition,
and formally coincides with the conditions which are discussed in \cite{CFZ}.} with the ``linear forms condition" introduced in \cite{Tao2},
and so Theorem \ref{t2.2} yields the stronger version of the relative removal lemma obtained recently in \cite{CFZ}. More important,
in the general case we do not demand that the functions $\langle \psi_e:e\in\calh\rangle$ are independent and, in fact,
there are natural examples of pseudorandom families consisting of mutually correlated random variables~(see~\cite{DKK2}).

As we have already indicated, the proof of Theorem \ref{t2.2} follows the general philosophy of the transfer method of Green and Tao.
It is based on Theorem~\ref{t2.1} as well as on a relative counting lemma for pseudorandom families (Theorem \ref{t7.1}
in the main text). This relative counting lemma in turn relies on a new decomposition method which we shall discuss in detail
in Subsection 7.1. The reader is also referred to \cite{DK1} for further applications of this method in an arithmetic context.
We also remark that the problem of obtaining a counting lemma for $L_p$ regular hypergraphs was asked in \cite[Section 8]{BCCZ}.
\medskip

\noindent 2.3. \textbf{Consequences.} We conclude our discussion in this section by noting that Theorem \ref{t2.2} yields
a new Szemer\'{e}di-type theorem for sparse pseudorandom subsets of finite additive groups. We present this result in Subsection 8.3.
Readers mainly interested in the integers will also find in Subsection 8.3 a purely arithmetic version of Theorem \ref{t2.2}
(Theorem \ref{t8.3} in the main text).


\section{Background material}

\numberwithin{equation}{section}

Our general notation and terminology is standard. By $\nn=\{0,1,\dots\}$ we denote the set of all natural numbers.
\medskip

\noindent 3.1. \textbf{Martingale difference sequences.} Let $(X,\Sigma,\mu)$ be a probability space and recall that a finite
sequence $(d_i)_{i=0}^n$ of integrable real-valued random variables on $(X,\Sigma,\mu)$ is said to be a \emph{martingale
difference sequence} if there exists a martingale $(f_i)_{i=0}^n$ such that $d_0=f_0$ and $d_i=f_i-f_{i-1}$
if $n\meg 1$ and $i\in [n]$.

It is clear that every square-integrable martingale difference sequence $(d_i)_{i=0}^n$ is orthogonal in $L_2$ and, therefore,
\begin{equation} \label{e3.1}
\Big( \sum_{i=0}^n \|d_i\|_{L_2}^2 \Big)^{1/2} = \big\| \sum_{i=0}^n d_i \big\|_{L_2}.
\end{equation}
We will need the following extension of this basic fact.
\begin{prop} \label{p3.1}
Let $(X,\Sigma,\mu)$ be a probability space and $1<p \mik 2$. Then for every martingale difference
sequence $(d_i)_{i=0}^n$ in $L_p(X,\Sigma,\mu)$ we have
\begin{equation} \label{e3.2}
\Big( \sum_{i=0}^n \|d_i\|^2_{L_p} \Big)^{1/2} \mik \Big(\frac{1}{p-1}\Big)^{1/2} \, \big\| \sum_{i=0}^n d_i\big\|_{L_p}.
\end{equation}
\end{prop}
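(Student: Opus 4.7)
My plan is to prove Proposition \ref{p3.1} by induction on $n$, reducing it to a sharp one-step inequality which I establish through a direct calculus argument on the function $\varphi(t)=\|F+tD\|_{L_p}^2$. The one-step lemma I aim for is: for every sub-$\sigma$-algebra $\calg\subseteq \Sigma$, every $F\in L_p(X,\calg,\mu)$, and every $D\in L_p(X,\Sigma,\mu)$ with $\ave[D\,|\,\calg]=0$,
\[
\|F+D\|_{L_p}^2 \meg \|F\|_{L_p}^2 + (p-1)\,\|D\|_{L_p}^2. \qquad (\ast)
\]
Granted $(\ast)$, I let $(\calf_k)_{k=0}^n$ be the natural filtration of $(d_i)$ and $f_k=\sum_{i=0}^k d_i$. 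Applying $(\ast)$ with $\calg=\calf_{k-1}$, $F=f_{k-1}$, $D=d_k$ for $k=1,\dots,n$ and telescoping gives
\[
\Big\|\sum_{i=0}^n d_i\Big\|_{L_p}^2 = \|f_n\|_{L_p}^2 \,\meg\, \|d_0\|_{L_p}^2 + (p-1)\sum_{k=1}^n \|d_k\|_{L_p}^2 \,\meg\, (p-1)\sum_{k=0}^n \|d_k\|_{L_p}^2,
\]
where the last inequality uses $p-1\mik 1$, and \eqref{e3.2} follows after rearrangement and taking square roots.

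To prove $(\ast)$ I consider $\varphi\colon[0,1]\to[0,\infty)$, $\varphi(t)=\|F+tD\|_{L_p}^2$. Differentiating under the integral sign yields
\[
\varphi'(t) \,=\, 2\,\|F+tD\|_{L_p}^{2-p}\int |F+tD|^{p-2}(F+tD)\,D\,d\mu,
\]
and $\varphi'(0)=0$ by the martingale hypothesis, since $|F|^{p-2}F$ is $\calg$-measurable and therefore $\int |F|^{p-2}F\cdot D\,d\mu = \ave\big[|F|^{p-2}F\cdot \ave[D\,|\,\calg]\big]=0$. Setting $I(t)=\int |F+tD|^p\,d\mu$, $J(t)=\int |F+tD|^{p-2}(F+tD)D\,d\mu$ and $K(t)=\int |F+tD|^{p-2}D^2\,d\mu$, a second differentiation yields $\varphi''(t) = 2(2-p)I(t)^{2/p-2}J(t)^2 + 2(p-1)I(t)^{2/p-1}K(t)$. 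The first summand is nonnegative since $p\mik 2$; for the second I invoke the reverse H\"older inequality with exponents $r=p/2\in(1/2,1]$ and $r/(r-1)=p/(p-2)<0$, applied to the nonnegative integrands $|D|^2$ and $|F+tD|^{p-2}$, which gives
\[
K(t)\,\meg\,\Big(\!\int |D|^p\,d\mu\Big)^{2/p}\Big(\!\int |F+tD|^p\,d\mu\Big)^{(p-2)/p} \,=\, \|D\|_{L_p}^2\cdot I(t)^{(p-2)/p}.
\]
The exponents of $I(t)$ then cancel, giving $I(t)^{2/p-1}K(t)\meg \|D\|_{L_p}^2$, and therefore $\varphi''(t)\meg 2(p-1)\|D\|_{L_p}^2$ for every $t\in[0,1]$. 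Plugging into Taylor's formula $\varphi(1)=\varphi(0)+\varphi'(0)+\int_0^1(1-s)\varphi''(s)\,ds$ and using $\int_0^1(1-s)\,ds=1/2$ yields $\varphi(1)-\varphi(0)\meg (p-1)\|D\|_{L_p}^2$, which is $(\ast)$.

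The only delicate point is the reverse H\"older step, where one must accommodate degeneracies at which $F+tD$ vanishes on a set of positive measure (making $|F+tD|^{p-2}$ infinite). This is handled routinely by an $\varepsilon$-approximation, replacing $F+tD$ with $F+tD+\varepsilon$ for $\varepsilon>0$ and passing to the limit $\varepsilon\downarrow 0$; all the estimates above are uniform in $\varepsilon$ and pass cleanly to the limit. Everything else in the argument -- the vanishing of the cross term via the martingale property, the computation of $\varphi''$, and the induction on $n$ -- is essentially mechanical, so the sharp constant $(p-1)$ in $(\ast)$ (and hence the sharp $1/\sqrt{p-1}$ in \eqref{e3.2}) comes out of the pointwise reverse H\"older estimate on $K(t)$.
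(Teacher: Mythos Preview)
Your argument is correct and yields the sharp constant. The paper itself does not prove Proposition~\ref{p3.1}; it attributes the result to Ricard and Xu, who deduce it from the Ball--Carlen--Lieb uniform convexity inequality (stated in the paper as Proposition~\ref{p3.2}). Your route is genuinely different: you bypass Proposition~\ref{p3.2} and establish the conditional one-step inequality~$(\ast)$ directly, via a second-derivative computation on $\varphi(t)=\|F+tD\|_{L_p}^2$ combined with the reverse H\"older inequality. This has the advantage of being fully self-contained and of making transparent where the constant $p-1$ arises (from the factor in $J'(t)=(p-1)K(t)$ together with the exact cancellation of the $I(t)$-exponents after reverse H\"older). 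In fact the same second-derivative bound, integrated over $[-1,1]$ without invoking $\varphi'(0)=0$, recovers Proposition~\ref{p3.2} itself; so the two approaches are cousins, but yours reaches the conditional inequality $(\ast)$ in one stroke rather than first proving the unconditional two-point inequality and then passing to the martingale setting as Ricard--Xu do.

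One small caveat: the regularization you propose, ``replace $F+tD$ by $F+tD+\varepsilon$'', does not actually avoid zeros (a constant shift does not prevent $F+tD$ from vanishing somewhere), so the differentiability and reverse-H\"older issues are not resolved by it. The standard fix is to replace $|F+tD|$ by $\big((F+tD)^2+\varepsilon^2\big)^{1/2}$, i.e.\ to work with $\varphi_\varepsilon(t)=\big(\int((F+tD)^2+\varepsilon^2)^{p/2}\,d\mu\big)^{2/p}$, carry out the identical computation (now genuinely $C^2$ in $t$), and let $\varepsilon\downarrow 0$ at the end. With this cosmetic change the proof is complete.
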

It is a remarkable fact that the constant $(p-1)^{-1/2}$ appearing in the right-hand side of \eqref{e3.2} is best possible.
This sharp estimate was recently proved by Ricard and Xu \cite{RX} who deduced it from the following uniform convexity inequality
for $L_p$ spaces (see \cite{BCL} or \cite[Lemma 4.32]{Pi}).
\begin{prop} \label{p3.2}
Let $(X,\Sigma,\mu)$ be an arbitrary measure space and $1<p \mik 2$. Then for every $x,y\in L_p(X,\Sigma,\mu)$ we have
\begin{equation} \label{e3.3}
\|x\|_{L_p}^2 + (p-1) \|y\|_{L_p}^2 \mik \frac{\|x+y\|_{L_p}^2+\|x-y\|_{L_p}^2}{2}.
\end{equation}
\end{prop}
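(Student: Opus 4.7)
The plan is to reduce \eqref{e3.3} to a sharp \emph{scalar} (two-point) inequality of the same shape, and then promote it to the functional level via two soft integration tools. Concretely, I would first establish the pointwise Ball--Carlen--Lieb inequality
\begin{equation*}
\Big(\frac{|a+b|^p+|a-b|^p}{2}\Big)^{2/p}\meg a^2+(p-1)\,b^2 \qquad (a,b\in\rr,\ 1<p\mik 2),
\end{equation*}
and then integrate it using the reverse Minkowski inequality on $L_{p/2}$ (valid because $p/2\mik 1$) together with the convexity of $t\mapsto t^{2/p}$ (valid because $2/p\meg 1$).

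For the scalar inequality, by homogeneity I can assume $|b|\mik |a|=1$, so the claim reduces to $\varphi(t)\meg \psi(t)$ on $[-1,1]$, where $\varphi(t)=\big(\frac{(1+t)^p+(1-t)^p}{2}\big)^{2/p}$ and $\psi(t)=1+(p-1)t^2$. Both functions are even and equal $1$ at $t=0$, and a Taylor expansion of $\varphi$ gives $\varphi(t)=1+(p-1)t^2+O(t^4)$, so $\varphi$ and $\psi$ agree to second order at the origin. The inequality on the full interval then follows from a calculus argument comparing derivatives (or, after a suitable substitution such as $s=t/(1+t)$, from a one-variable convexity check). This two-point inequality, with the sharp constant $p-1$, is the technical heart of the proof.

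Given the pointwise inequality, apply it to $x(\omega)$ and $y(\omega)$ and raise to the power $p/2$ to obtain
\begin{equation*}
\big(|x(\omega)|^2+(p-1)|y(\omega)|^2\big)^{p/2}\mik \frac{|x(\omega)+y(\omega)|^p+|x(\omega)-y(\omega)|^p}{2}.
\end{equation*}
Integrating and then raising to the power $2/p$, the right-hand side becomes $\big(\frac{\|x+y\|_{L_p}^p+\|x-y\|_{L_p}^p}{2}\big)^{2/p}$. For the left-hand side, the reverse Minkowski inequality for $\|\cdot\|_{L_{p/2}}$ applied to the nonnegative functions $|x|^2$ and $(p-1)|y|^2$ yields
\begin{equation*}
\|x\|_{L_p}^2+(p-1)\|y\|_{L_p}^2\mik \Big(\int \big(|x|^2+(p-1)|y|^2\big)^{p/2}\, d\mu\Big)^{2/p}.
\end{equation*}
Finally, the convexity of $t\mapsto t^{2/p}$ on $[0,\infty)$ gives the Jensen estimate
\begin{equation*}
\Big(\frac{\|x+y\|_{L_p}^p+\|x-y\|_{L_p}^p}{2}\Big)^{2/p}\mik \frac{\|x+y\|_{L_p}^2+\|x-y\|_{L_p}^2}{2},
\end{equation*}
and chaining the three displays produces \eqref{e3.3}.

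The main obstacle, and the only nontrivial step, is the sharp scalar inequality: matching the second-order Taylor coefficients is immediate, but verifying that the inequality persists on all of $[-1,1]$ with the sharp constant $p-1$ (rather than some smaller constant) is a delicate calculus argument. Everything else---reverse Minkowski on the quasi-norm $L_{p/2}$ and Jensen on the convex function $t\mapsto t^{2/p}$---is soft and, crucially, each integration step is tight enough not to spoil the sharp constant inherited from the two-point inequality.
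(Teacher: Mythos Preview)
The paper does not give a proof of Proposition~\ref{p3.2}; it simply quotes the inequality with a reference to Ball--Carlen--Lieb \cite{BCL} and Pisier \cite[Lemma~4.32]{Pi}. Your outline is correct and is in fact the standard Ball--Carlen--Lieb argument: reduce to the two-point inequality, then lift to $L_p$ via the reverse Minkowski inequality in $L_{p/2}$ (valid since $p/2\mik 1$) and the convexity of $t\mapsto t^{2/p}$. You have correctly identified the scalar inequality as the only genuinely nontrivial step; the calculus verification you allude to can be carried out, for instance, by the integral representation used in \cite{BCL}, and everything downstream is soft and constant-preserving as you say.
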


\noindent 3.2. \textbf{Probability spaces with small atoms.} Recall that a classical theorem of Sierpi\'{n}ski asserts that for every
nonatomic finite measure space $(X,\Sigma,\mu)$ and every $0\mik c\mik \mu(X)$ there exists $C\in\Sigma$ with $\mu(C)=c$.
We will need the following approximate version of this result.
\begin{prop} \label{p3.3}
Let $0<\eta< 1$ and let $(X,\Sigma,\mu)$ be a probability space such that $\mu(A)\mik \eta$ for every atom $A$ of $(X,\Sigma,\mu)$.
Also let $B\in\Sigma$ with $\mu(B)>\eta$, and let $\eta\mik c<\mu(B)$. Then there exists $C\in\Sigma$ with $C\subseteq B$ and $c\mik \mu(C)< c+\eta$.
\end{prop}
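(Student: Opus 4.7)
The plan is to reduce to the Sierpi\'{n}ski theorem by splitting $B$ into its nonatomic and its purely atomic parts, and then to close the gap with a greedy selection of atoms.

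First I would recall that any probability space decomposes as $X=X_{\mathrm{na}}\sqcup X_{\mathrm{a}}$, where $X_{\mathrm{a}}$ is the union of all atoms and $X_{\mathrm{na}}$ is nonatomic. Since the atoms are pairwise essentially disjoint and have positive measure summing to at most $1$, there are at most countably many, so $X_{\mathrm{a}}$ (and therefore $X_{\mathrm{na}}$) lies in $\Sigma$. Setting $B_{\mathrm{na}}=B\cap X_{\mathrm{na}}$, the restriction of $\mu$ to $B_{\mathrm{na}}$ is a finite nonatomic measure space, and enumerating the atoms contained in $B$ as $A_1,A_2,\dots$ (possibly finitely many) gives $\mu(B)=\mu(B_{\mathrm{na}})+\sum_i \mu(A_i)$, with each $\mu(A_i)\mik \eta$ by hypothesis.

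Next I would split into two cases. In the first case, $\mu(B_{\mathrm{na}})\meg c$; then Sierpi\'{n}ski's theorem applied to the nonatomic space $(B_{\mathrm{na}},\Sigma|_{B_{\mathrm{na}}},\mu|_{B_{\mathrm{na}}})$ produces $C\subseteq B_{\mathrm{na}}\subseteq B$ with $\mu(C)=c$, which a fortiori satisfies $c\mik \mu(C)<c+\eta$. In the second case, $\mu(B_{\mathrm{na}})<c$. Here I set $\delta\coloneqq c-\mu(B_{\mathrm{na}})>0$ and consider the partial sums $s_k\coloneqq \sum_{i=1}^{k}\mu(A_i)$, with $s_0=0$. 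Since the total atomic mass in $B$ equals $\mu(B)-\mu(B_{\mathrm{na}})>c-\mu(B_{\mathrm{na}})=\delta$, there exists a least index $k\meg 1$ with $s_k\meg\delta$. By minimality, $s_{k-1}<\delta$, so
\begin{equation*}
\delta \mik s_k = s_{k-1}+\mu(A_k) < \delta+\eta.
\end{equation*}
Then $C\coloneqq B_{\mathrm{na}}\cup A_1\cup\cdots\cup A_k\subseteq B$ satisfies $\mu(C)=\mu(B_{\mathrm{na}})+s_k\in [c,c+\eta)$, as required.

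There is no serious obstacle here; the only thing to be careful about is to invoke the atomic/nonatomic decomposition cleanly (so that Sierpi\'{n}ski's theorem applies to $B_{\mathrm{na}}$) and to verify that the total atomic mass available in $B$ exceeds the shortfall $\delta$, which is exactly where the hypothesis $c<\mu(B)$ enters. The bound $\mu(A_i)\mik \eta$ for atoms is what gives the ``$+\eta$'' slack in the approximation, which is the whole point of the statement as an asymptotic weakening of Sierpi\'{n}ski's theorem.
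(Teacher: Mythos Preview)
Your proof is correct but takes a genuinely different route from the paper's. The paper argues by contradiction via a transfinite exhaustion: assuming no such $C$ exists, it builds an increasing chain $(Z_\alpha)_{\alpha<\omega_1}$ of measurable subsets of $B$, each with $\mu(Z_\alpha)<c$ and $\mu(Z_{\alpha+1}\setminus Z_\alpha)>0$; at each successor stage Sierpi\'{n}ski's theorem is used to show that $B\setminus Z_\beta$ must contain an atom of the ambient space (otherwise one could carve out a subset of exactly the missing measure, contradicting the hypothesis), and that atom is adjoined. An uncountable family of pairwise disjoint sets of positive measure in a probability space is impossible, yielding the contradiction.

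Your argument is instead direct and constructive: you invoke the standard atomic/nonatomic decomposition once, apply Sierpi\'{n}ski's theorem exactly to the nonatomic piece $B_{\mathrm{na}}$, and absorb any remaining shortfall by a finite greedy selection of atoms, where the bound on atom sizes gives precisely the $\eta$ overshoot. This is more elementary and actually exhibits the set $C$, whereas the paper's transfinite argument trades constructivity for not having to set up the decomposition $X=X_{\mathrm{na}}\sqcup X_{\mathrm{a}}$ explicitly. Both proofs ultimately lean on Sierpi\'{n}ski's theorem in the same way; yours front-loads the structural decomposition, the paper's discovers atoms on the fly.
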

Proposition \ref{p3.3} is straightforward for discrete probability spaces. The general case follows from the aforementioned
result of Sierpi\'{n}ski and a transfinite exhaustion argument. We give a proof for the convenience of the reader.
\begin{proof}[Proof of Proposition \emph{\ref{p3.3}}]
Assume not, that is,
\begin{enumerate}
\item[(H)] for every $C\in\Sigma$ with $C\subseteq B$ either $\mu(C)<c$ or $\mu(C)\meg c+\eta$.
\end{enumerate}
We will use hypothesis (H) to construct an increasing family $\langle Z_{\alpha}:\alpha<\omega_1\rangle$ of measurable events of $(X,\Sigma,\mu)$
such that $Z_{\alpha}\subseteq B$, $\mu(Z_{\alpha})<c$ and $\mu(Z_{\alpha+1}\setminus Z_{\alpha})>0$ for every $\alpha<\omega_1$.
Clearly, this leads to a contradiction.

We begin by setting $Z_0=\emptyset$. If $\alpha$ is a limit ordinal, then we set $Z_{\alpha}=\bigcup_{\beta<\alpha} Z_{\beta}$;
notice that $\mu(Z_{\alpha})\mik c$ and so, by hypothesis (H), we have $\mu(Z_\alpha)<c$. Finally, let $\alpha=\beta+1$
be a successor ordinal. By Sierpi\'{n}ski's result and hypothesis (H), the set $B\setminus Z_{\beta}$ must contain an atom $A$
of $(X,\Sigma,\mu)$. We set $Z_{\alpha}=Z_{\beta}\cup A$ and we observe that $\mu(Z_{\beta+1}\setminus Z_{\beta})=\mu (A)>0$.
Also notice that $\mu(Z_{\alpha})<c+\eta$. Thus, invoking hypothesis (H) once again, we conclude that
$\mu(Z_{\alpha})<c$ and the proof of Proposition \ref{p3.3} is completed.
\end{proof}

\noindent 3.3. \textbf{Removal lemma for hypergraph systems.} We will need the following version of the removal lemma for
hypergraph systems which is due to Tao \cite{Tao1} (see also \cite{DK2} for an exposition). Closely related
discrete analogues were obtained earlier by Gowers \cite{Go1} and, independently, by Nagle, R\"{o}dl, Schacht and Skokan \cite{NRS,RSk}.
\begin{thm} \label{t3.4}
For every $n,r\in\nn$ with $n\meg r\meg 2$ and every $0<\ee\mik 1$ there exist a strictly positive constant $\Delta(n,r,\ee)$ and a
positive integer $K(n,r,\ee)$ with the following property. Let $\mathscr{H}=(n,\langle(X_i,\Sigma_i,\mu_i):i\in [n]\rangle,\calh)$
be an $r$-uniform hypergraph system and for every $e\in \calh$ let $E_e\in\calb_e$ such that
\begin{equation} \label{e3.4}
\bmu \Big(\bigcap_{e\in \calh}E_e\Big) \mik \Delta(n,r,\ee).
\end{equation}
Then for every $e\in \calh$ there exists $F_e\in\calb_e$ with
\begin{equation} \label{e3.5}
\bmu(E_e\setminus F_e)\mik \ee \ \text{ and } \ \bigcap_{e\in \calh} F_e=\emptyset.
\end{equation}
Moreover, there exists a collection $\langle\calp_{\! e'}: e'\subseteq e \text{ for some } e\in\calh \rangle$
of partitions of\, $\bbx$ such that: \emph{(i)} $\calp_{\! e'}\subseteq \calb_{e'}$ and $|\calp_{\! e'}|\mik K(n,r,\ee)$
for every $e'\subseteq e\in\calh$, and \emph{(ii)} for every $e\in\calh$ the set $F_e$ belongs to the algebra
generated by the family $\bigcup_{e'\varsubsetneq e} \calp_{\! e'}$.
\end{thm}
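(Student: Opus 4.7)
The plan is to follow the classical regularity-plus-counting paradigm underlying the hypergraph removal lemma of Gowers \cite{Go1} and Nagle, R\"{o}dl, Schacht and Skokan \cite{NRS,RSk}, in the probability-theoretic setup of Tao \cite{Tao1}. (The same paradigm, in $L_p$-refined form, will later drive the more delicate Theorem \ref{t2.2} of this paper.)

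\emph{Step 1 (regularize).} I would apply the hypergraph regularity lemma simultaneously to the indicator family $\langle \mathbf{1}_{E_e}:e\in\calh\rangle$. This produces, for every $e'\subsetneq e\in\calh$, a partition $\calp_{e'}\subseteq\calb_{e'}$ of $\bbx$ with $|\calp_{e'}|\mik K$ (where $K=K(n,r,\ee)$), and for every $e\in\calh$ a decomposition $\mathbf{1}_{E_e}=g_e+h_e$ with $g_e=\ave(\mathbf{1}_{E_e}\,|\,\cala_e)$, where $\cala_e$ is the algebra generated by $\bigcup_{e'\subsetneq e}\calp_{e'}$, and the cut-norm seminorm $\|h_e\|_{\cals_{\partial e}}<\sigma$ is as small as I wish, by tuning the growth function supplied to the regularity lemma to grow fast enough.

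\emph{Step 2 (threshold).} Setting $\tau:=\ee/2$, let $F_e$ be the union of those atoms $A$ of $\cala_e$ on which the density $\bmu(E_e\cap A)/\bmu(A)$ is at least $\tau$. Then $F_e\in\cala_e\subseteq\calb_e$, and a direct summation over the low-density atoms gives
\begin{equation*}
\bmu(E_e\setminus F_e)=\sum_{A:\,\bmu(E_e\cap A)<\tau\bmu(A)}\bmu(E_e\cap A)\mik \tau\mik \ee.
\end{equation*}

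\emph{Step 3 (count and empty).} The hypergraph counting lemma (which is a by-product of the same regularity argument; see \cite{DK2}) yields
\begin{equation*}
\Big|\int\prod_{e\in\calh}\mathbf{1}_{E_e}\,d\bmu-\int\prod_{e\in\calh}g_e\,d\bmu\Big|\mik C_{n,r,K}\sum_{e\in\calh}\|h_e\|_{\cals_{\partial e}},
\end{equation*}
and I tune $\sigma$ so that this error is at most $\Delta/2$. Since $g_e\meg\tau\,\mathbf{1}_{F_e}$ pointwise and $\prod_eg_e$ is constant on each atom of the common refinement $\cala$ generated by all the $\calp_{e'}$, we obtain
\begin{equation*}
\tau^{|\calh|}\,\bmu\Big(\bigcap_{e\in\calh}F_e\Big)\mik \int\prod_{e\in\calh}g_e\,d\bmu\mik \bmu\Big(\bigcap_{e\in\calh}E_e\Big)+\Delta/2\mik 3\Delta/2.
\end{equation*}
To upgrade from ``small measure'' to literal emptiness, I would trim each $F_e$ by discarding a controlled family of $\cala_e$-atoms that together cover every positive-measure atom of $\cala$ sitting inside $\bigcap_{e\in\calh}F_e$; the combined measure lost is absorbed into the $\ee/2$ slack of Step 2, provided $\Delta=\Delta(n,r,\ee)$ was chosen small enough relative to $\ee$ and $K$.

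The principal obstacle is the delicate hierarchy of parameter choices. The final cell-count $K$ is produced by the regularity lemma in terms of its growth function; the counting-lemma constant $C_{n,r,K}$ then depends on $K$; the cut-norm gain $\sigma$ must beat $\Delta\cdot C_{n,r,K}^{-1}/2$; and $\Delta$ itself must be small enough both to make $\bmu(\bigcap_eF_e)$ negligible compared to the surviving cell sizes required for the emptying step and to survive the threshold $\ee$. These choices must be made in reverse order, with the growth function selected last so as to absorb all the preceding quantitative requirements; getting the bookkeeping of the emptying step right without inflating the exceptional set past $\ee$ is the technical heart of the proof.
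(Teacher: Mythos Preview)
The paper does not prove Theorem \ref{t3.4}; it is quoted in Section 3.3 as background material due to Tao \cite{Tao1} (with \cite{DK2} cited for an exposition and \cite{Go1,NRS,RSk} for discrete analogues). So there is no ``paper's own proof'' to compare your attempt against.

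Your sketch follows the correct paradigm, but Step 3 as written has a genuine gap for $r\meg 3$. The displayed inequality
\[
\Big|\int\prod_{e\in\calh}\mathbf{1}_{E_e}\,d\bmu-\int\prod_{e\in\calh}g_e\,d\bmu\Big|\mik C_{n,r,K}\sum_{e\in\calh}\|h_e\|_{\cals_{\partial e}}
\]
is the \emph{graph} counting lemma; it fails for hypergraphs. Small top-level cut norm does not control the product when $r\meg 3$: one needs either the Gowers box norm $\|\cdot\|_{\square^r}$, or equivalently the full hierarchy of regularizations at every intermediate level $1\mik s\mik r-1$, together with an inductive counting argument that climbs through the levels. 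Your Step 1 does produce partitions $\calp_{e'}$ for all $e'\varsubsetneq e$, but you never use that lower-level structure in Step 3. This hierarchy is exactly the technical heart of the hypergraph removal lemma and the reason the proofs in \cite{Tao1,Go1,NRS,RSk} are substantially longer than the graph case.

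The ``emptying'' substep is also not right as stated. Discarding $\cala_e$-atoms to cover $\bigcap_{e\in\calh}F_e$ can remove far more measure than $\bmu\big(\bigcap_e F_e\big)$, since a single $\cala_e$-atom may be much larger than its trace on the intersection. The standard argument runs in the opposite direction: if $\bigcap_e F_e$ were nonempty it would contain a full ``simplex'' of regular atoms, and the (correctly stated) counting lemma then gives a \emph{lower} bound on $\bmu\big(\bigcap_e E_e\big)$ depending only on $\tau$ and the regularity parameters, contradicting \eqref{e3.4} once $\Delta$ is chosen small enough.
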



\section{A H\"{o}lder-type inequality}

\numberwithin{equation}{section}

In this section, our goal is to prove a H\"{o}lder-type inequality for $L_p$ regular random variables. We begin with a brief
motivating discussion. Let $(X,\Sigma,\mu)$~be~a probability space and let $f$ be a nonnegative real-valued random variable
which belongs to $L_p(X,\Sigma,\mu)$ for some $1<p\mik \infty$. Let $q$ denote the conjugate exponent~of~$p$ and notice that,
by H\"{o}der's inequality, for every $A\in\Sigma$ we have
\begin{equation} \label{e4.1}
\Big( \int_A f\, d\mu\Big)^q \mik \|f\|^q_{L_p} \cdot \mu(A).
\end{equation}
Of course, this uniform estimate does not hold true if $f$ is not in $L_p(X,\Sigma,\mu)$. Nevertheless, we do have a version
of \eqref{e4.1} if we merely assume that f is $L_p$ regular. This is the content of the following proposition.
\begin{prop} \label{p4.1}
Let  $n,r \in \nn$ with $n \meg r \meg 2$ and $0<\eta\mik (r+1)^{-1}$. Also let $C>0$ and $1<p \mik \infty$, and let $q$ denote
the conjugate exponent of $p$. Finally, let $\mathscr{H}=(n,\langle(X_i,\Sigma_i,\mu_i)\colon i \in [n]\rangle, \calh)$ be an
$\eta$-nonatomic hypergraph system, $e \in \calh$ with $|e|=r$, and let $f \in L_1(\bbx,\calb_e,\boldsymbol{\mu})$ be nonnegative.
Then the following hold.
\begin{enumerate}
\item [(a)] If $f$ is $(C,\eta,p)$-regular, then for every $A \in \cals_{\partial_e}$ we have
\begin{equation} \label{e4.2}
\Big( \int_A f\, d\bmu\Big)^q \mik C^q\big(\bmu(A) + (r+3)\eta\big).
\end{equation}
\item [(b)] On the other hand, if \eqref{e4.2} is satisfied for every $A \in \cals_{\partial_e},$ then the random variable $f$ is
$(K,\eta,p)$-regular where $K= C(r+4)^{1/q}\eta^{-1/p}$. In particular, if $p=\infty$, then  $f$ is $(C(r+4),\eta,\infty)$-regular.
\end{enumerate}
\end{prop}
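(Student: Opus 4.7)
The plan for part (a) is to embed $A$ into a cell $A'$ of a partition $\calp$ of $\bbx$ in $\cals_{\partial e}$ with each cell of measure $\meg\eta$ and $\bmu(A')\mik\bmu(A)+(r+3)\eta$. Once such $\calp$ is built, the $(C,\eta,p)$-regularity of $f$ and H\"{o}lder's inequality yield
\[
\int_A f\, d\bmu\;\mik\;\int_{A'}f\, d\bmu\;=\;\int\mathbf{1}_{A'}\,\ave(f\,|\,\cala_{\calp})\, d\bmu\;\mik\;C\,\bmu(A')^{1/q},
\]
and raising to the $q$-th power gives \eqref{e4.2}.

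To build such a partition I would write $A=\bigcap_{k=1}^r A_k$ with $A_k\in\calb_{e\setminus\{i_k\}}$ and run a greedy procedure: initialize $F=\bbx$; for $k=1,\dots,r$ in order, if both $\bmu(F\cap A_k)\meg\eta$ and $\bmu(F\cap A_k^c)\meg\eta$ then peel off $F\cap A_k^c$ as a new cell and update $F\leftarrow F\cap A_k$, otherwise skip. Outputting $A'=F$, every peeled cell has measure $\meg\eta$ and the whole partition lies in $\cals_{\partial e}$. When $\bmu(A)\meg\eta$ the inclusion $A\subseteq F\cap A_k$ is preserved at every step, which forces $\bmu(F\cap A_k)\meg\eta$, so a skip can only be triggered by $\bmu(F\cap A_k^c)<\eta$; since $A'\cap A_k^c\subseteq F_{k-1}\cap A_k^c$ at each such step, summing over the skipped indices yields $\bmu(A'\setminus A)<r\eta$, as desired.

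The delicate case is $\bmu(A)<\eta$, which I plan to handle by first enlarging $A$ to an auxiliary set $A^*\in\cals_{\partial e}$ with $A\subseteq A^*$ and $\bmu(A^*)<3\eta$, and then applying the greedy procedure to $A^*$. Since $\bmu(A^*)\meg\eta$ this falls into the easy case above and outputs a cell $A'$ with $\bmu(A')<\bmu(A^*)+r\eta<(r+3)\eta$. To construct $A^*$, let $j^*$ be the largest index with $\bmu(A_1\cap\cdots\cap A_{j^*})\meg\eta$ (which exists because the chain starts at $\bmu(\bbx)=1$ and ends below $\eta$, and in particular $j^*\mik r-1$). If $\bmu(A_1\cap\cdots\cap A_{j^*})<2\eta$ simply take $A^*=A_1\cap\cdots\cap A_{j^*}$; otherwise apply Proposition \ref{p3.3} to the finite measure $\lambda(E)\coloneqq\bmu(A_1\cap\cdots\cap A_{j^*}\cap E)$ on $\calb_{e\setminus\{i_{j^*+1}\}}$ to find $B_{j^*+1}\supseteq A_{j^*+1}$ in $\calb_{e\setminus\{i_{j^*+1}\}}$ with $\bmu(A_1\cap\cdots\cap A_{j^*}\cap B_{j^*+1})\in[\eta,3\eta)$, and set $A^*=A_1\cap\cdots\cap A_{j^*}\cap B_{j^*+1}$. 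The main technical obstacle is verifying that $\lambda$ is $\eta$-nonatomic so that Proposition \ref{p3.3} applies; this follows because $\lambda$ is absolutely continuous with respect to the product measure $\bmu_{e\setminus\{i_{j^*+1}\}}$ with density bounded by $1$, while the atoms of the latter are products of factor atoms and hence of measure at most $\eta$ by the $\eta$-nonatomicity of $\mathscr{H}$.

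Part (b) is a short computation. Given any partition $\calp\subseteq\cals_{\partial e}$ with $\bmu(B)\meg\eta$ for every $B\in\calp$, one has $\ave(f\,|\,\cala_{\calp})=\sum_{B\in\calp}\bmu(B)^{-1}\bigl(\int_B f\, d\bmu\bigr)\mathbf{1}_B$. The hypothesis \eqref{e4.2} combined with $\bmu(B)+(r+3)\eta\mik(r+4)\bmu(B)$ (which follows from $\bmu(B)\meg\eta$) gives $\int_B f\, d\bmu\mik C(r+4)^{1/q}\bmu(B)^{1/q}$. For $1<p<\infty$, raising to the $p$-th power, integrating, and using the identity $p/q-p+1=0$ yields
\[
\|\ave(f\,|\,\cala_{\calp})\|_{L_p}^p\;\mik\;|\calp|\cdot C^p(r+4)^{p/q}\;\mik\;\frac{C^p(r+4)^{p/q}}{\eta},
\]
where the last inequality uses $|\calp|\mik 1/\eta$, yielding $\|\ave(f\,|\,\cala_{\calp})\|_{L_p}\mik K$ with $K=C(r+4)^{1/q}\eta^{-1/p}$. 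The case $p=\infty$ follows directly from the pointwise bound $(\int_B f\, d\bmu)/\bmu(B)\mik C(r+4)$.
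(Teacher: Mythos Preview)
Your argument is correct and follows the same architecture as the paper: part (b) is identical, and for part (a) the paper also splits into the cases $\bmu(A)\meg\eta$ and $\bmu(A)<\eta$, handling the first with a greedy partition (the paper's Lemma~\ref{l4.2} is exactly your peeling procedure) and reducing the second to the first by enlarging $A$ to some $A^*\in\cals_{\partial e}$ with $\eta\mik\bmu(A^*)<3\eta$ (the paper's Lemma~\ref{l4.3}).

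The one substantive difference is in how $A^*$ is produced, and here your sketch needs two repairs. First, Proposition~\ref{p3.3} yields a \emph{subset} of a given set with prescribed measure, not a superset containing $A_{j^*+1}$; you should apply it inside $A_{j^*+1}^c$ (where $\lambda(A_{j^*+1}^c)>\eta$) to obtain $C$ with $\lambda(C)\in[\eta,2\eta)$ and then set $B_{j^*+1}=A_{j^*+1}\cup C$. Second, the implication ``$\lambda\ll\bmu_{e\setminus\{i_{j^*+1}\}}$ with density $\mik 1$ and the latter is $\eta$-nonatomic, hence $\lambda$ is $\eta$-nonatomic'' is true but not immediate: an atom of $\lambda$ need not be an atom of $\bmu_{e\setminus\{i_{j^*+1}\}}$, so one must argue separately on the atomic and nonatomic parts of $\bmu_{e\setminus\{i_{j^*+1}\}}$ that no $\lambda$-atom can have $\lambda$-measure exceeding $\eta$. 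The paper's Lemma~\ref{l4.3} sidesteps both issues by a different device: it argues by contradiction, repeatedly enlarging one coordinate set $A_{e'}$ at a time by a piece of $\bmu$-measure in $[\eta,2\eta)$ found via Proposition~\ref{p3.3} applied directly to $(\bbx,\calb_{e'},\bmu)$, whose $\eta$-nonatomicity is inherited straight from the factors. Your route is a bit more direct once patched; the paper's is cleaner on the measure-theoretic side.
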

Proposition \ref{p4.1} is based on the simple (but quite useful) observation that for every $A\in\cals_{\partial e}$
with $\bmu(A)\meg \eta$ we can find a partition of $\bbx$ which almost contains the set $A$, and whose members are contained in
$\cals_{\partial e}$ and are not too small. We present this fact in a slightly more general form in Subsection 4.1 (this form
is needed later on in Section 5). The proof of Proposition \ref{p4.1} is completed in Subsection 4.2.
\medskip

\noindent 4.1. \textbf{A partition lemma.} For every probability space $(X,\Sigma,\mu)$ and every finite partition $\calp$
of $X$ with $\calp\subseteq \Sigma$ we set
\begin{equation} \label{e4.3}
\iota(\calp)=\min\{\mu(P): P\in\calp\}.
\end{equation}
We have the following lemma.
\begin{lem} \label{l4.2}
Let $r$ be a positive integer and $0<\theta<1$. Also let $(X,\Sigma,\mu)$ be a probability space, $(\calb_i)_{i=1}^r$ a finite
sequence of sub-$\sigma$-algebras of\, $\Sigma$, and set
\[ \cals=\Big\{\bigcap_{i=1}^r A_i: A_i\in\calb_i \text{ for every } i\in [r] \Big\}. \]
Then for every $A\in\cals$ with $\mu(A)\meg \theta$ there exist: \emph{(i)} a partition $\calq$ of $X$ with
$\calq\subseteq\cals$ and  $\iota(\calq)\meg \theta$, \emph{(ii)} a set $B\in\calq$ with $A\subseteq B$, and \emph{(iii)} pairwise disjoint
sets $B_1,\dots,B_r\in \cals$ with $\mu(B_i)<\theta$ for every $i\in [r]$, such that $B\setminus A=\bigcup_{i=1}^r B_i$.
\end{lem}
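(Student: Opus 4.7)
The plan is to argue by induction on $r$. For the base case $r=1$, we have $\cals=\calb_1$, so $A\in\calb_1$; if $\mu(X\setminus A)\meg\theta$, the binary partition $\calq=\{A, X\setminus A\}$ with $B=A$ and $B_1=\emptyset$ works, while if $\mu(X\setminus A)<\theta$, the trivial partition $\calq=\{X\}$ with $B=X$ and $B_1=X\setminus A$ works.

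For the inductive step, I would set $A'=\bigcap_{i=1}^{r-1} A_i$ and write $\cals'=\{\bigcap_{i=1}^{r-1} D_i: D_i\in\calb_i\}$. Since $A\subseteq A'$ gives $\mu(A')\meg\mu(A)\meg\theta$, applying the induction hypothesis to $A'$ yields a partition $\calq'\subseteq\cals'$ of $X$ with $\iota(\calq')\meg\theta$, a cell $B'\in\calq'$ containing $A'$, and pairwise disjoint sets $B'_1,\dots,B'_{r-1}\in\cals'$ each of measure $<\theta$ whose union is $B'\setminus A'$. Note that $\cals'\subseteq\cals$ via the embedding appending $X\in\calb_r$ at coordinate $r$.

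I would then split on the size of $B'\cap A_r^c$. If $\mu(B'\cap A_r^c)\meg\theta$, I would refine $\calq'$ by replacing $B'$ with the two cells $B'\cap A_r$ and $B'\cap A_r^c$, take $B=B'\cap A_r$, and set $B_i=B'_i\cap A_r$ for $i<r$, padded with $B_r=\emptyset$; a direct computation gives $B\setminus A=(B'\setminus A')\cap A_r=\bigcup_{i=1}^{r-1}(B'_i\cap A_r)$, a disjoint union of sets in $\cals$ of measure $<\theta$, while $\mu(B'\cap A_r)\meg\mu(A)\meg\theta$ preserves $\iota(\calq)\meg\theta$. If instead $\mu(B'\cap A_r^c)<\theta$, I would keep $\calq=\calq'$ and $B=B'$, and decompose $B\setminus A=(B'\setminus A')\cup(A'\setminus A)=\bigcup_{i=1}^{r-1}B'_i\cup(A'\cap A_r^c)$, where the last piece belongs to $\cals$ and has measure at most $\mu(B'\cap A_r^c)<\theta$ because $A'\subseteq B'$, producing the required $r$ pairwise disjoint small pieces.

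I do not expect a serious obstacle; the main bookkeeping is verifying that every new cell and every new small piece still lies in $\cals$, and that the bound of $r$ small pieces is preserved when passing from $r-1$ to $r$. Both are automatic, since intersecting with $A_r$ or $A_r^c\in\calb_r$ (or appending $X\in\calb_r$) lifts elements of $\cals'$ into $\cals$, and each case either keeps the existing pieces intersected with $A_r$, or retains them unchanged and adds the single new piece $A'\cap A_r^c$.
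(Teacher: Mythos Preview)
Your inductive proof is correct. The base case and both cases of the inductive step check out; in particular, in Case~2 the decomposition $B\setminus A=(B'\setminus A')\cup(A'\cap A_r^c)$ is indeed a disjoint union (since $A'\cap A_r^c\subseteq A'$ while $B'\setminus A'$ is disjoint from $A'$), and the bound $\mu(A'\cap A_r^c)\mik\mu(B'\cap A_r^c)<\theta$ follows from $A'\subseteq B'$.

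However, the paper does \emph{not} argue by induction. It gives a direct one-shot construction: writing $A=\bigcap_{i=1}^r A_i$, it partitions $X\setminus A$ into the ``slices'' $C_{[r],i}=\big(\bigcap_{j<i}A_j\big)\cap(X\setminus A_i)$, lets $G\subseteq[r]$ be the set of indices for which $\mu(C_{[r],i})\meg\theta$, and then takes $B=\bigcap_{i\in G}A_i$ and $\calq=\{B\}\cup\{C_{G,i}:i\in G\}$; the small pieces $B_i$ are obtained by intersecting with $B$. Your approach peels off one $\sigma$-algebra at a time, whereas the paper identifies all the ``large'' coordinates simultaneously. The two constructions can yield different partitions in concrete cases, but both are equally valid and of comparable length; the paper's version has the minor advantage of an explicit closed-form description of $\calq$ and $B$, while your induction is arguably more transparent.
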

\begin{proof}
Fix $A\in\cals$ with $\mu(A)\meg \theta$ and write $A=\bigcap_{i=1}^r A_i$ where $A_i\in\calb_i$ for every $i\in [r]$.
For every nonempty $I\subseteq [r]$ and every $i\in I$ let
\[ C_{I, i}=\Big( \bigcap_{j\in \{\ell\in I: \ell <i\}} A_j \Big)\cap (X\setminus A_i) \]
with the convention that $ C_{I,i}=X\setminus A_i$ if $i=\min (I)$. It is clear that $C_{I,i}\in\cals$ for every $i\in I$.
Moreover, notice that the family $\{C_{I,i}: i\in I\}$ is a partition of $X \setminus\bigcap_{i\in I}A_i$. We set
$G=\big\{i\in [r]: \mu(C_{[r],i})\meg \theta\big\}$ and we observe that if $G=\emptyset$, then the trivial
partition $\calq=\{X\}$ and the sets $C_{[r],1},\dots,C_{[r],r}$ satisfy the requirements of the lemma. So, assume
that $G$ is nonempty and let
\[  B=\bigcap_{i\in G} A_i \ \text{ and } \ \calq=\{B\}\cup\big\{C_{G,i}: i\in G\big\}. \]
Also let $ B_i=B \cap C_{[r]\setminus G, i}$ if $i\notin G$, and $B_i=\emptyset$ if $i\in G$.
We will show that $\calq, B$ and $B_1,\dots,B_r$ are as desired.

Indeed, notice first that $\calq$ is a partition of $X$ with $\calq\subseteq \cals$, $B\in\calq$ and  $A\subseteq B$.
Next, let $Q\in\calq$ be arbitrary. If $Q=B$, then $\mu(Q)=\mu(B)\meg \mu(A)\meg \theta$. Otherwise,
there exists $i\in G$ such that $Q=C_{G,i}$. Since $C_{[r],i}\subseteq C_{G,i}$ and $i\in G$,
we see that $\mu(Q)=\mu(C_{G,i}) \meg \mu(C_{[r],i}) \meg \theta$. Thus, we have $\iota(\calq)\meg \theta$.
Finally, observe that $B_1,\dots,B_r\in\cals$ are pairwise disjoint and
\[ B\setminus A = \bigcup_{i=1}^r (B\cap C_{[r],i}) = \bigcup_{i\notin G} (B\cap C_{[r]\setminus G,i}) = \bigcup_{i=1}^r B_i. \]
Moreover, for every $i\notin G$ we have
\[ B_i=B\cap C_{[r]\setminus G,i}= \Big(\bigcap_{j\in G} A_j\Big) \cap C_{[r]\setminus G,i}\subseteq C_{[r],i} \]
and so $\mu(B_i) \mik \mu(C_{[r],i})<\theta$. The proof of Lemma \ref{l4.2} is completed.
\end{proof}

\noindent 4.2. \textbf{Proof of Proposition \ref{p4.1}.} We will need the following lemma. It is a version of Proposition \ref{p3.3}
in the context of hypergraph systems.
\begin{lem} \label{l4.3}
Let  $n,r \in \nn$ with $n\meg r \meg 2$ and $0<\alpha, \eta <1$ with $r \eta \mik 1-a$. Also let
$\mathscr{H}=(n,\langle(X_i,\Sigma_i,\mu_i)\colon i \in [n]\rangle, \calh)$ be an $\eta$-nonatomic hypergraph system,
and let $e \in \calh$ with $|e| = r$. Then for every $A\in \cals_{\partial e}$ with $\bmu(A)<a$ there exists
$B\in \cals_{\partial e}$ with $A\subseteq B$ and $a \mik \boldsymbol{\mu}(B)< a+2\eta$.
\end{lem}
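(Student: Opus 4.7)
The plan is to construct $B$ by starting from $A=\bigcap_{e'\in\partial e}A_{e'}$ and enlarging it in two stages: a greedy ``coarse'' step that replaces some of the factors $A_{e'}$ by the ambient space $\bbx$, followed by a one-coordinate fine adjustment modelled on Proposition \ref{p3.3}.

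Fix an enumeration $\partial e=\{e_1,\dots,e_r\}$ and, for $0\mik j\mik r$, set $B^{(j)}=\bigcap_{i>j}A_{e_i}$ with the convention $B^{(r)}=\bbx$. Each $B^{(j)}$ belongs to $\cals_{\partial e}$ (use the factor $\bbx\in\calb_{e_i}$ for $i\mik j$), and the chain $A=B^{(0)}\subseteq B^{(1)}\subseteq\dots\subseteq B^{(r)}=\bbx$ is monotone. Since $\bmu(A)<a\mik 1=\bmu(B^{(r)})$, there is a least $j\in [r]$ with $\bmu(B^{(j)})\meg a$; if furthermore $\bmu(B^{(j)})<a+2\eta$ we simply set $B=B^{(j)}$ and are done, so we may assume $\bmu(B^{(j)})\meg a+2\eta$. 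Put $T=B^{(j)}$ and define the finite measure $\nu$ on $(\bbx,\calb_{e_j})$ by $\nu(E)=\bmu(E\cap T)$. By the minimality of $j$, $\nu(A_{e_j})=\bmu(B^{(j-1)})<a$, while $\nu(\bbx)=\bmu(T)\meg a+2\eta$, so in particular $\nu(\bbx\setminus A_{e_j})>2\eta$. Since $\mathscr{H}$ is $\eta$-nonatomic and $|e_j|=r-1\meg 1$, every atom of $(\bbx,\calb_{e_j})$ has $\bmu$-measure at most $\eta^{r-1}\mik\eta$, whence every $\nu$-atom has $\nu$-measure at most $\eta$ as well. The proof of Proposition \ref{p3.3} now applies verbatim to the finite measure $\nu$, to the set $\bbx\setminus A_{e_j}$, and to the target value $c=\max\{\eta,\,a-\nu(A_{e_j})\}$, producing $C\in\calb_{e_j}$ with $C\subseteq \bbx\setminus A_{e_j}$ and $c\mik\nu(C)<c+\eta$. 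A short case split on whether $a-\nu(A_{e_j})\meg\eta$ then gives $\nu(A_{e_j}\cup C)\in [a,a+2\eta)$, and the set $B:=(A_{e_j}\cup C)\cap T$ lies in $\cals_{\partial e}$, contains $A$, and satisfies $\bmu(B)=\nu(A_{e_j}\cup C)\in [a,a+2\eta)$.

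The main difficulty is that $\cals_{\partial e}$ is closed under finite intersections but not under countable unions, so a direct transfinite exhaustion inside $\cals_{\partial e}$, in the spirit of Proposition \ref{p3.3}, is unavailable. The greedy prepass is precisely what allows us to bypass this: after collapsing all but one of the factors $A_{e'}$ to the ambient space, the fine adjustment takes place inside the single $\sigma$-algebra $\calb_{e_j}$, where the exhaustion argument of Proposition \ref{p3.3} goes through without change.
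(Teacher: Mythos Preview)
Your argument is correct and takes a genuinely different route from the paper's. The paper argues by contradiction: assuming no admissible $B$ exists, it repeatedly enlarges one coordinate factor $A_{e'}$ by a set of $\bmu$-measure in $[\eta,2\eta)$ (found via Proposition~\ref{p3.3}), each time staying below $a$ by the contradictory hypothesis; after $\lceil 2r/\eta\rceil$ steps the pigeonhole principle forces some factor to exceed measure $1$. Your proof is instead a direct two-stage construction: a greedy pass replaces factors by $\bbx$ until the measure first reaches $a$, and then a single application of Proposition~\ref{p3.3} (to the auxiliary measure $\nu=\bmu(\,\cdot\cap T)$ on $\calb_{e_j}$) performs the fine adjustment. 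This is cleaner, avoids the long iteration, and---as your argument shows---does not actually use the hypothesis $r\eta\mik 1-a$ that the paper needs for \eqref{e4.4}.

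One step that deserves a sentence more of justification is the assertion that every $\nu$-atom has $\nu$-measure at most $\eta$. The inequality $\nu(E)=\bmu(E\cap T)\mik\bmu(E)$ alone is not enough, since a $\nu$-atom need not be a $\bmu$-atom. What is true is that if $A$ is a $\nu$-atom then, passing to $A\cap\{g>1/n\}$ for large $n$ (where $g=\ave(\mathbf{1}_T\,|\,\calb_{e_j})\mik 1$ is the density of $\nu$ with respect to $\bmu$ on $\calb_{e_j}$), one obtains a set on which $\nu$ and $\bmu$ are mutually absolutely continuous; this set is then a $\bmu$-atom of $\calb_{e_j}$, hence has $\bmu$-measure at most $\eta^{r-1}\mik\eta$, and $\nu\mik\bmu$ finishes the bound. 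With this point made explicit, the proof is complete.
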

\begin{proof}
We argue as in the proof of Proposition \ref{p3.3}. Specifically, fix $A\in\cals_{\partial e}$ with $\bmu(A)<a$
and assume, towards a contradiction, that
\begin{enumerate}
\item[(H)] for every $B\in\cals_{\partial e}$ with $A\subseteq B$ either $\bmu(B)<a$ or $\bmu(B)\meg a+2\eta$.
\end{enumerate}
For every $e'\in \partial e$ we select $A_{e'}\in\calb_{e'}$ such that $A=\bigcap_{e'\in\partial e} A_{e'}$ and we observe that
\begin{equation} \label{e4.4}
\sum_{e'\in\partial e} \bmu(\bbx\setminus A_{e'})\meg \bmu\Big(\bbx\setminus \bigcap_{e'\in\partial e} A_{e'}\Big)>1-a \meg r\eta.
\end{equation}
Therefore, there exists $e'_1\in \partial e$ such that $\bmu(\bbx\setminus A_{e'_1})>\eta$. Since $\mathscr{H}$ is $\eta$-nonatomic,
we see that $\bmu(Z)\mik \eta^{r-1}\mik \eta$ for every atom $Z$ of $(\bbx,\calb_{e'_1},\bmu)$. Hence, by Proposition \ref{p3.3}
applied  for ``$B=\bbx\setminus A_{e'_1}$" and ``$c=\eta$", there exists $B_{e'_1}\in\calb_{e'_1}$ with $B_{e'_1}\subseteq \bbx\setminus A_{e'_1}$
and $\eta\mik\bmu(B_{e'_1})<2\eta$. We set $A^1_{e'_1}= A_{e'_1}\cup B_{e'_1}$ and $A^1_{e'}=A_{e'}$ if $e'\in \partial e\setminus \{e'_1\}$.
Notice that: (i) $\bmu(A^1_{e'_1})\meg \bmu(A_{e'_1})+\eta$, (ii) $\bigcap_{e' \in \partial e} A^1_{e'}\in\cals_{\partial e}$, and (iii)
$A\subseteq \bigcap_{e' \in \partial e}  A^1_{e'}$. Moreover, $\bmu\big(\bigcap_{e'\in\partial e}A^1_{e'}\big)\mik\bmu(A)+2\eta< a+2\eta$
and so, by hypothesis (H), we obtain that $\bmu\big(\bigcap_{e' \in \partial e} A^1_{e'}\big)<a$. It follows, in particular, that
the estimate in \eqref{e4.4} is satisfied for the family $\langle A^1_{e'}:e' \in \partial e\rangle$.

Thus, setting $M=\lceil 2r/\eta\rceil$, we select recursively: (a) a finite sequence $(e'_m)_{m=1}^M$ in $\partial e$, and (b) for every
$e'\in\partial e$ a finite sequence $(A^m_{e'})_{m=0}^M$ in $\calb_{e'}$ with $A^0_{e'}=A_{e'}$, such that for every $m\in [M]$ the following hold.
\begin{enumerate}
\item[(C1)] For every $e' \in \partial e$ we have $A^{m-1}_{e'}\subseteq A^m_{e'}$.
Moreover, $\bmu(A^m_{e'_m}) \meg \bmu(A^{m-1}_{e'_m})+\eta$.
\item[(C2)] We have $\bmu\big(\bigcap_{e' \in \partial e} A^m_{e'}\big)<a$.
\end{enumerate}
By the classical pigeonhole principle, there exist $L\subseteq [M]$ with $|L|\meg M/r$ and $g\in\partial e$ such that $e'_m=g$ for every $m\in L$.
If $\ell=\max(L)$, then by (C1) we conclude that $\bmu(A^\ell_g)\meg 2$ which is clearly a contradiction.
\end{proof}
We are ready to give the proof of Proposition \ref{p4.1}.
\begin{proof}[Proof of Proposition \emph{\ref{p4.1}}]
(a) Fix $A\in\cals_{\partial e}$. If $\eta\mik \bmu(A)$, then we claim that
\begin{equation} \label{e4.5}
\Big( \int_A f \, d\bmu \Big)^q \mik C^q\big(\bmu(A)+r\eta\big).
\end{equation}
Indeed, by Lemma \ref{l4.2}, there exist a partition $\calq$ of $\bbx$ with $\calq\subseteq \cals_{\partial e}$ and $\iota(\calq)\meg \eta$,
and $B\in\calq$ with $A\subseteq B$ and $\bmu(B\setminus A)<r\eta$. Since $f$ is $(C,\eta,p)$-regular we see that
\[ \frac{\int_B f\, d\bmu}{\bmu(B)}\, \bmu(B)^{1/p}\mik \|\ave(f\, | \, \cala_{\calq})\|_{L_p}\mik C. \]
(Here, we have $\bmu(B)^{1/p}=1$ if $p=\infty$.) Hence,
\[ \Big( \int_A f\, d\bmu\Big)^q \mik \Big( \int_B f\, d\bmu\Big)^q \mik C^q\bmu(B) \mik C^q\big(\bmu(A)+r\eta\big). \]
Next, assume that $0\mik \bmu(A)<\eta$. Our hypothesis that $0<\eta \mik (r+1)^{-1}$ yields that $r\eta \mik 1-\eta$ and so,
by Lemma \ref{l4.3}, there exists $B\in\cals_{\partial e}$ with $A\subseteq B$ and $\eta\mik\bmu(B)<3\eta$. Therefore,
\begin{equation} \label{e4.6}
\Big( \int_A f\, d\bmu\Big)^q \mik \Big( \int_B f\, d\bmu \Big)^q \stackrel{\eqref{e4.5}}{\mik}
C^q \big(\bmu(B)+r\eta\big) \mik C^q \big(\bmu(A)+(r+3)\eta\big)
\end{equation}
and the proof of part (a) is completed.
\medskip

\noindent (b) Let $\calp$ be an arbitrary partition of $\bbx$ with $\calp\subseteq\cals_{\partial e}$ and $\iota(\calp)\meg\eta$.
By \eqref{e4.2}, for every $P\in\calp$ we have $\int_P f\, d\bmu \mik C(r+4)^{1/q} \bmu(P)^{1/q}$. Therefore, if $1 < p < \infty$,
\begin{eqnarray*}
\|\ave( f\, | \,\cala_\calp)\|_{L_p}^p & = & \sum_{P \in \calp} \Big( \frac{\int_{P} f\, d\bmu}{\bmu(P)}\Big)^p \bmu(P)
\mik C^p (r+4)^{p/q} \sum_{P \in \calp} \bmu(P)^{p/q+1-p} \\
& = & C^p (r+4)^{p/q} |\calp| \mik C^p (r+4)^{p/q} \eta^{-1}.
\end{eqnarray*}
On the other hand, if $p=\infty$,
\[ \|\ave( f\, | \, \cala_\calp)\|_{L_{\infty}} = \max\Big\{ \frac{\int_P f\, d\bmu}{\bmu(P)}: P \in \calp\Big\}
\mik \frac{C\big( \bmu(P)+(r+3)\eta\big)}{\bmu(P)} \mik C(r+4) \]
as desired.
\end{proof}


\section{Regularity lemma for $L_p$ regular hypergraphs}

\numberwithin{equation}{section}

In this section we give the proof of Theorem \ref{t2.1}. Specifically, let $n,r\in\nn$ with $n\meg r\meg 2$,
and let $C>0$ and $1<p\mik \infty$. Let $q$ denote the conjugate exponent of $p$ and set $p^{\dagger}=\min\{2,p\}$.
Also let $\mathscr{H}=(n,\langle(X_i,\Sigma_i,\mu_i):i\in [n]\rangle,\calh)$ be an $r$-uniform hypergraph system.
\emph{These data will be fixed throughout this section}.
\medskip

\noindent 5.1. \textbf{Preliminary tools.} The following result is a refinement of the partition lemma presented in Subsection 4.1.
Recall that for every probability space $(X,\Sigma,\mu)$ and every partition $\calp$ of $X$ with $\calp\subseteq \Sigma$
we set $\iota(\calp)=\min\{\mu(P):P\in\calp\}$.
\begin{lem} \label{l5.1}
Let $0<\vartheta,\eta<1$ and $e\in\calh$. Let $f\in L_1(\bbx,\calb_e,\bmu)$ be nonnegative and $(C,\eta,p)$-regular,
and $\calp$ a finite partition of\, $\bbx$ with $\calp\subseteq\cals_{\partial e}$. Assume that
\begin{equation} \label{e5.1}
\eta\mik \big(\vartheta \cdot \iota(\calp)\big)^q
\end{equation}
and that $\mathscr{H}$ is $\eta$-nonatomic. Then for every $A\in\cals_{\partial e}$ there exist:
\emph{(i)} a refinement $\calq$ of\, $\calp$ with $\calq\subseteq \cals_{\partial e}$ and
$\iota(\calq)\meg \big(\vartheta \cdot \iota(\calp)\big)^q$, and \emph{(ii)} a set $B\in\cala_{\calq}$, such that
\begin{equation} \label{e5.2}
\int_{A\triangle B} \! \ave(f\,|\, \cala_{\calp}) \, d\bmu\mik C r\vartheta  \ \text{ and } \
\int_{A\triangle B} \! f \, d\bmu \mik 5C r^2\vartheta.
\end{equation}
\end{lem}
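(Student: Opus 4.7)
The plan is to construct the refinement $\calq$ and the set $B$ cell by cell inside $\calp$. Setting $\theta'=\vartheta^q\iota(\calp)^{q-1}$ makes $\theta'\iota(\calp)=(\vartheta\iota(\calp))^q$, which is exactly the desired lower bound on $\iota(\calq)$. For each $P\in\calp$, writing $P=\bigcap_{e'\in\partial e}P_{e'}$ with $P_{e'}\in\calb_{e'}$, I would regard $P$ as a probability space under the conditional measure $\bmu_P(\cdot)=\bmu(\cdot\cap P)/\bmu(P)$, so that $A\cap P$ is an intersection of sets in the restricted sub-$\sigma$-algebras $\calb_{e'}|_P$. If $\bmu_P(A\cap P)\meg\theta'$, I would apply Lemma \ref{l4.2} inside $(P,\bmu_P)$ with parameter $\theta'$ to obtain a partition $\calq_P\subseteq\cals_{\partial e}$ of $P$ (intersecting any $\calb_{e'}|_P$-set with $P_{e'}$ returns a $\calb_{e'}$-set, so the ambient family is $\cals_{\partial e}$) with minimum $\bmu_P$-mass at least $\theta'$, a cell $B_P\in\calq_P$ containing $A\cap P$, and pairwise disjoint $B^1_P,\dots,B^r_P\in\cals_{\partial e}$ of $\bmu_P$-mass less than $\theta'$ whose union is $B_P\setminus(A\cap P)$; otherwise I would set $\calq_P=\{P\}$ and $B_P=\emptyset$. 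Taking $\calq=\bigcup_{P\in\calp}\calq_P$ and $B=\bigsqcup_P B_P\in\cala_\calq$, each cell $Q\subseteq P$ of $\calq$ satisfies $\bmu(Q)=\bmu_P(Q)\bmu(P)\meg\theta'\iota(\calp)=(\vartheta\iota(\calp))^q$, so $\iota(\calq)\meg(\vartheta\iota(\calp))^q$ as required; moreover, in both cases $\bmu_P((A\triangle B)\cap P)\mik r\theta'$, and $(A\triangle B)\cap P$ is a disjoint union of at most $r$ sets in $\cals_{\partial e}$, each of $\bmu$-mass less than $\theta'\bmu(P)$.

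For the first estimate, since $\ave(f\,|\,\cala_\calp)$ is $\cala_\calp$-measurable, the adjoint property of conditional expectation gives
\[
\int_{A\triangle B}\ave(f\,|\,\cala_\calp)\,d\bmu=\int f\cdot\ave(\mathbf{1}_{A\triangle B}\,|\,\cala_\calp)\,d\bmu.
\]
On each cell $P$ the conditional indicator equals $\bmu_P((A\triangle B)\cap P)\mik r\theta'$, while $\int f\,d\bmu\mik C$ follows by applying $(C,\eta,p)$-regularity to the trivial partition $\{\bbx\}$. Hence this integral is at most $Cr\theta'\mik Cr\vartheta$, the last inequality using $\vartheta^q\mik\vartheta$ and $\iota(\calp)^{q-1}\mik 1$.

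The main work is the second estimate; this is the step I expect to be hardest because $A\triangle B$ is not $\cala_\calq$-measurable and $f$ need not be bounded. I would apply Proposition \ref{p4.1}(a) to each of the at most $r$ pieces $D\subseteq P$ of $(A\triangle B)\cap P$: since $D\in\cals_{\partial e}$ with $\bmu(D)<\theta'\bmu(P)$, one has $\int_D f\,d\bmu\mik C(\bmu(D)+(r+3)\eta)^{1/q}$. Summing inside each $P$ via $\sum_{i=1}^r a_i^{1/q}\mik r^{1/p}(\sum_i a_i)^{1/q}$ yields $\int_{(A\triangle B)\cap P}f\,d\bmu\mik Cr\big(\theta'\bmu(P)+(r+3)\eta\big)^{1/q}$, and a second application of the same inequality over $P\in\calp$ gives
\[
\int_{A\triangle B}f\,d\bmu\mik Cr\,|\calp|^{1/p}\big(\theta'+|\calp|(r+3)\eta\big)^{1/q}.
\]
The hypothesis $\eta\mik(\vartheta\iota(\calp))^q=\theta'\iota(\calp)$ forces $|\calp|(r+3)\eta\mik(r+3)\theta'$, so the last factor is $\mik((r+4)\theta')^{1/q}$. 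Combined with $|\calp|^{1/p}\mik\iota(\calp)^{-1/p}$ and the identity $\iota(\calp)^{-1/p}(\theta')^{1/q}=\iota(\calp)^{-1/p}\vartheta\iota(\calp)^{(q-1)/q}=\vartheta$ (which uses $1/p+1/q=1$ together with $\theta'=\vartheta^q\iota(\calp)^{q-1}$), this bounds $\int_{A\triangle B}f\,d\bmu$ by $Cr(r+4)^{1/q}\vartheta\mik 5Cr^2\vartheta$. The main technical obstacle is exactly this exponent balancing: the number of cells of $\calp$, which controls the accumulated H\"older error, grows like $\iota(\calp)^{-1}$ and can be absorbed by the smallness of the pieces only because $\theta'$ is calibrated as $\vartheta^q\iota(\calp)^{q-1}$.
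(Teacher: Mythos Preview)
Your proof is correct and follows essentially the same route as the paper: the same calibration $\theta'=\vartheta^q\iota(\calp)^{q-1}$, the same cell-by-cell construction of $\calq$ and $B$ via Lemma~\ref{l4.2}, and the same use of Proposition~\ref{p4.1} followed by concavity of $x^{1/q}$ and the identity $\iota(\calp)^{-1/p}(\theta')^{1/q}=\vartheta$ for the second estimate. The only difference is in the first estimate: the paper bounds $\bmu(A\triangle B)\mik r\theta'$ globally and applies H\"older against $\|\ave(f\,|\,\cala_\calp)\|_{L_p}\mik C$, whereas you use the pointwise bound $\ave(\mathbf{1}_{A\triangle B}\,|\,\cala_\calp)\mik r\theta'$ together with $\int f\,d\bmu\mik C$; both are valid and yield the same bound $Cr\vartheta$.
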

\begin{proof}
It is based on the following greedy algorithm. Let $P$ be a cell of the partition~$\calp$ and assume that the trace $A\cap P$
of the given set $A$ on $P$ is a relatively large subset of $P$. In this case, using Lemma \ref{l4.2} applied for the conditional
probability measure with respect to $P$, we may select a partition $\calq_P$ of $P$ and an element $B_P$ of $\calq_P$ which is a
rather strong approximation of $A\cap P$. On the other hand, if the measure of $A\cap P$ is negligible when compared with the measure
of $P$, then we do not split~$P$. After scanning all the cells of the partition $\calp$, this process will eventually produce a refinement
$\calq$ of $\calp$ and a fairly accurate global approximation $B$ of $A$ which belongs to the $\sigma$-algebra generated by $\calq$.
Once this is done, the estimates in \eqref{e5.2} will follow from H\"{o}lder's inequality and Proposition \ref{p4.1} respectively.

We proceed to the details. We fix $A\in\cals_{\partial e}$ and we set
\begin{equation} \label{e5.3}
\theta=\vartheta^q \cdot\iota(\calp)^{q-1}.
\end{equation}

First, for every $P\in\calp$ we select a partition $\calq_P$ of $P$ with $\calq_P\subseteq \cals_{\partial e}$ and a set
$B_P\in \cals_{\partial e}$ as follows. Let $P\in\calp$ be arbitrary. If $\bmu(A\cap P)< \theta\bmu(P)$, then we set
$\calq_P=\{P\}$ and $B_P=\emptyset$. Otherwise, let $(P,\bbs_P,\bmu_P)$ be the probability space where
$\bbs_P=\{C\cap P: C\in \bbs\}$ and $\bmu_P$ is the conditional probability measure of $\bmu$ with respect to $P$, that is,
$\bmu_P(C)=\bmu(C\cap P)/\bmu(P)$ for every $C\in\bbs$. Write $\partial e=\{e'_1,\dots, e'_r\}$ and for every $i\in [r]$
let $\calb_i=\{B\cap P: B\in \calb_{e'_i}\}$; observe that $\calb_i$ is a sub-$\sigma$-algebra of $\bbs_P$. Also let
$\cals=\big\{\bigcap_{i=1}^r B_i: B_i\in\calb_i \text{ for every }i\in [r] \big\}\subseteq \cals_{\partial e}$.
By Lemma \ref{l4.2} applied to the probability space $(P,\bbs_P,\bmu_P)$ and the set $A\cap P\in \cals$, we obtain:
(i) a partition $\calq_P$ of $P$ with $\calq_P\subseteq \cals$ and $\iota(\calq_P)\meg \theta$, (ii) a set $B_P\in\calq_P$
with  $A\cap P\subseteq B_P$, and (iii) pairwise disjoint sets $B^P_1,\dots,B^P_r\in\cals$ with $\bmu_P(B^P_i)<\theta$
for every $i\in [r]$, such that $B_P\setminus(A\cap P)=\bigcup_{i=1}^r B^P_i$.

Next, we define
\begin{equation} \label{e5.4}
\calq=\bigcup_{P\in\calp}\calq_P \ \text{ and } \ B=\bigcup_{P\in\calp} B_P.
\end{equation}
Observe that $\calq$ is a refinement of\, $\calp$ with $\calq\subseteq \cals_{\partial e}$ and
$\iota(\calq)\meg \theta \cdot \iota(\calp) = \big( \vartheta \cdot \iota(\calp)\big)^q$.
Also note that $B\in\cala_{\calq}$ and, setting $\calp^*=\{P\in \calp: \bmu(A\cap P)\meg \theta\bmu(P)\}$, we have
\begin{equation} \label{e5.5}
A\, \triangle\, B = \Big( \bigcup_{P\in\calp\setminus \calp^*}(A\cap P)\Big) \cup
\Big( \bigcup_{P\in\calp^*}\big(\bigcup_{i=1}^{r} B^P_{i}\big) \Big)
\end{equation}
where for every $P\in\calp^*$ the sets $B_1^P,\dots,B^P_r$ are as in (iii) above. In particular, noticing that
$\bmu(A\cap P)< \theta\bmu(P)$ for every $P\notin\calp^*$ and $\bmu(B^P_i)<\theta\bmu(P)$ for every $P\in\calp^*$
and every $i\in [r]$, we see that
\begin{equation} \label{e5.6}
\bmu(A\,\triangle\, B)\mik r\theta.
\end{equation}
On the other hand, by \eqref{e5.1}, we have $\iota(\calp)\meg \eta$, and so $\|\ave(f\,|\,\cala_{\calp})\|_{L_p}\mik C$
since $f$ is $(C,\eta,p)$-regular. Hence, by H\"older's inequality, we obtain that
\[ \int_{A\triangle B} \ave(f\,|\,\cala_{\calp})\,d\bmu\mik \|\ave(f\,|\,\cala_{\calp})\|_{L_p} \cdot
\bmu(A\,\triangle\, B)^{1/q}\stackrel{\eqref{e5.6}}{\mik} C r^{1/q}\theta^{1/q} \stackrel{\eqref{e5.3}}{\mik} C r \vartheta. \]
We proceed to show that $\int_{A\triangle B} f\,d\bmu \mik  5C r^2\vartheta$. To this end notice first that
\begin{equation} \label{e5.7}
\int_{A\triangle B} f\, d\bmu = \sum_{P\in\calp\setminus\calp^*} \int_{A\cap P} f\, d\bmu +
\sum_{P\in\calp^*}\sum_{i=1}^r \int_{B^P_i} f\,d\bmu.
\end{equation}
By \eqref{e5.1} and \eqref{e5.3}, we have $\eta\mik \theta\bmu(P)$ for every $P\in\calp$. Hence, if $P\in\calp\setminus \calp^*$,
then, by Proposition \ref{p4.1},
\begin{eqnarray*}
\Big( \int_{A\cap P} f\,d\bmu\Big)^q & \mik & C^q \big(\bmu(A\cap P) +(r+3)\eta\big) \\
& \mik & C^q\big(\theta \bmu(P)+(r+3)\theta \bmu(P)\big)\mik 5 C^q r\theta \bmu(P)
\end{eqnarray*}
and so
\begin{equation} \label{e5.8}
\sum_{P\in\calp\setminus \calp^*} \int_{A\cap P} f\,d\bmu \mik 5Cr\theta^{1/q} \sum_{P\in\calp\setminus\calp^*} \bmu(P)^{1/q}.
\end{equation}
Respectively, for every $P\in\calp^*$ and every $i\in [r]$ we have
\[\Big( \int_{B^P_i} f\,d\bmu\Big)^q \mik C^q\big(\bmu(B^P_i)+(r+3)\eta\big) \mik 5C^q r\theta \bmu(P) \]
which yields that
\begin{equation} \label{e5.9}
\sum_{P\in\calp^*} \sum_{i=1}^r \int_{B^P_i} f\,d\bmu \mik 5C r^2 \theta^{1/q} \sum_{P\in\calp^*}\bmu(P)^{1/q}.
\end{equation}
Finally, notice that the function $x^{1/q}$ is concave on $\rr^+$ since $q\meg 1$. Therefore,
\begin{equation} \label{e5.10}
\sum_{P \in\calp} \bmu(P)^{1/q} \mik |\calp|^{1/p}\mik \iota(\calp)^{-1/p}.
\end{equation}
Combining \eqref{e5.7}--\eqref{e5.10} we conclude that
\[ \int_{A \triangle B} f\,d\bmu \mik 5Cr^2\theta^{1/q} \sum_{P \in \calp} \bmu(P)^{1/q} \mik
5Cr^2\theta^{1/q} \cdot \iota(\calp)^{-1/p} \stackrel{\eqref{e5.3}}{=} 5Cr^2\vartheta \]
and the proof of Lemma \ref{l5.1} is completed.
\end{proof}
Lemma \ref{l5.1} is used in the proof of the following result which asserts (roughly speaking) that if a given approximation
of an $L_p$ regular random variable $f$ is not sufficiently close to $f$ in the cut norm, then we can find a much finer approximation.
\begin{lem} \label{l5.2}
Let  $0<\delta,\eta< 1$ and set $\vartheta=\delta(12Cr^2)^{-1}$. Also let $e\in\calh$ and let $f\in L_1(\bbx,\calb_e,\bmu)$
be nonnegative and $(C,\eta,p)$-regular. Finally, let $\calp$ be a finite partition of\, $\bbx$ with $\calp\subseteq\cals_{\partial e}$
such that $\|f-\ave(f\,|\,\cala_{\calp})\|_{\cals_{\partial e}}>\delta$. Assume that
\begin{equation} \label{e5.11}
\eta\mik \big(\vartheta \cdot \iota(\calp)\big)^q
\end{equation}
and that $\mathscr{H}$ is $\eta$-nonatomic. Then there exists a refinement $\calq$ of\, $\calp$ with $\calq\subseteq \cals_{\partial e}$
and $\iota(\calq)\meg \big(\vartheta \cdot \iota(\calp)\big)^q$, such that
$\|\ave(f\,|\,\cala_{\calq})-\ave(f\,|\,\cala_{\calp})\|_{L_{p^{\dagger}}}\meg \delta/2$.
\end{lem}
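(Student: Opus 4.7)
\medskip

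\noindent \textbf{Proof sketch of Lemma \ref{l5.2}.} The plan is to use the cut-norm witness provided by the hypothesis to locate a set $A\in\cals_{\partial e}$ on which the deficit between $f$ and $\ave(f\,|\,\cala_{\calp})$ is large, then feed $A$ into Lemma \ref{l5.1} to produce the refinement $\calq$ together with a set $B\in\cala_{\calq}$ that approximates $A$ well enough to transfer the deficit from $A$ to $B$, and finally peel off a factor via H\"older.

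\medskip

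\noindent \emph{Step 1 (choose a witness).} By the assumption $\|f-\ave(f\,|\,\cala_{\calp})\|_{\cals_{\partial e}}>\delta$ we fix $A\in\cals_{\partial e}$ with
\[
\Big|\int_A \big(f-\ave(f\,|\,\cala_{\calp})\big)\,d\bmu\Big|>\delta.
\]

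\smallskip

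\noindent \emph{Step 2 (apply Lemma \ref{l5.1}).} The hypothesis \eqref{e5.11} is precisely what is needed to invoke Lemma \ref{l5.1} with the same $\vartheta$. This yields a refinement $\calq$ of $\calp$ with $\calq\subseteq\cals_{\partial e}$ and $\iota(\calq)\meg(\vartheta\cdot\iota(\calp))^q$, together with a set $B\in\cala_{\calq}$ such that
\[
\int_{A\triangle B}\ave(f\,|\,\cala_{\calp})\,d\bmu\mik Cr\vartheta \ \text{ and } \
\int_{A\triangle B} f\,d\bmu\mik 5Cr^2\vartheta.
\]

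\smallskip

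\noindent \emph{Step 3 (transfer the mass from $A$ to $B$).} Combining the two displays above yields
\[
\Big|\int_{A\triangle B}\big(f-\ave(f\,|\,\cala_{\calp})\big)\,d\bmu\Big|\mik 5Cr^2\vartheta+Cr\vartheta\mik 6Cr^2\vartheta=\frac{\delta}{2},
\]
by the definition $\vartheta=\delta(12Cr^2)^{-1}$. Together with Step 1 this gives
\[
\Big|\int_B\big(f-\ave(f\,|\,\cala_{\calp})\big)\,d\bmu\Big|\meg\delta-\frac{\delta}{2}=\frac{\delta}{2}.
\]
Since $B\in\cala_{\calq}$, the defining property of conditional expectation gives $\int_B f\,d\bmu=\int_B\ave(f\,|\,\cala_{\calq})\,d\bmu$, and therefore
\[
\Big|\int_B\big(\ave(f\,|\,\cala_{\calq})-\ave(f\,|\,\cala_{\calp})\big)\,d\bmu\Big|\meg\frac{\delta}{2}.
\]

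\smallskip

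\noindent \emph{Step 4 (H\"older).} Let $r^{\dagger}$ denote the conjugate exponent of $p^{\dagger}=\min\{2,p\}$. Since $\bmu(B)\mik 1$ and $r^{\dagger}\meg 1$ we have $\bmu(B)^{1/r^{\dagger}}\mik 1$, hence by H\"older's inequality applied to $\mathbf{1}_B$ and $\ave(f\,|\,\cala_{\calq})-\ave(f\,|\,\cala_{\calp})$,
\[
\big\|\ave(f\,|\,\cala_{\calq})-\ave(f\,|\,\cala_{\calp})\big\|_{L_{p^{\dagger}}}\meg\frac{\delta}{2},
\]
which is the desired conclusion.

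\medskip

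\noindent \emph{Where the real work sits.} All of the technical effort has already been discharged in the proof of Lemma \ref{l5.1}; once that greedy partition construction is in hand, the present lemma is essentially bookkeeping. The one point that requires care is the precise dependence $\vartheta=\delta(12Cr^2)^{-1}$, which is chosen exactly so that the error in Step 3 does not swallow the deficit obtained in Step 1, thereby allowing the final estimate to come out with the clean constant $1/2$ on the right-hand side.
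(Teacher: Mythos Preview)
Your proof is correct and mirrors the paper's argument step for step: pick a cut-norm witness $A$, invoke Lemma~\ref{l5.1} to obtain $\calq$ and $B\in\cala_{\calq}$, transfer the deficit from $A$ to $B$, and finish with the trivial $L_1\mik L_{p^{\dagger}}$ bound (which you phrase via H\"older, the paper via monotonicity of $L_p$ norms). One cosmetic slip in Step~3: the quantity you actually need to bound is $\big|\int_A g\,d\bmu-\int_B g\,d\bmu\big|\mik\int_{A\triangle B}|g|\,d\bmu\mik\int_{A\triangle B}f\,d\bmu+\int_{A\triangle B}\ave(f\,|\,\cala_{\calp})\,d\bmu$, not $\big|\int_{A\triangle B}g\,d\bmu\big|$ as written, but the two estimates from Lemma~\ref{l5.1} control exactly this and the conclusion is unchanged.
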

\begin{proof}
We select $A\in\cals_{\partial e}$ such that
\begin{equation} \label{e5.12}
\big| \int_A \big(f-\ave(f\,|\,\cala_{\calp})\big)\, d\bmu\big|>\delta.
\end{equation}
Next, we apply Lemma \ref{l5.1} and we obtain a refinement $\calq$ of $\calp$ with $\calq\subseteq \cals_{\partial e}$
and $\iota(\calq)\meg \big(\vartheta \cdot \iota(\calp)\big)^q$, and a set $B\in\cala_{\calq}$ such that
$\int_{A\triangle B} \ave(f\,|\,\cala_{\calp}) \, d\bmu\mik C r\vartheta$ and
$\int_{A\triangle B} f\, d\bmu\mik 5C r^2\vartheta$. Then, by the choice of $\vartheta$, we have
\begin{multline*}
\ \ \ \ \big| \int_A \big(f-\ave(f\,|\,\cala_{\calp})\big)\,d\bmu - \int_B \big(f-\ave(f\,|\,\cala_{\calp})\big)\,d\bmu\big|\mik \\
\mik \int_{A\triangle B} f\,d\bmu + \int_{A\triangle B} \ave(f\,|\,\cala_{\calp})\,d\bmu
\mik 5Cr^2\vartheta + C r\vartheta \mik 6C r^2\vartheta=\delta/2
\end{multline*}
and so, by \eqref{e5.12},
\begin{equation} \label{e5.13}
\big|\int_B \big(f-\ave(f\,|\,\cala_{\calp})\big)\,d\bmu\big|\meg\delta/2.
\end{equation}
On the other hand, the fact that $B\in\cala_{\calq}$ yields that
\begin{equation} \label{e5.14}
\int_B \big(f-\ave(f\,|\,\cala_{\calp})\big)\, d\bmu=
\int_B \big(\ave(f\,|\,\cala_{\calq})-\ave(f\,|\,\cala_{\calp})\big)\, d\bmu.
\end{equation}
Therefore, by the monotonicity of the $L_p$ norms, we conclude that
\begin{multline*}
\ \ \ \ \|\ave(f\,|\,\cala_{\calq})-\ave(f\,|\,\cala_{\calp})\|_{L_{p^\dagger}} \meg
\|\ave(f\,|\,\cala_{\calq})-\ave(f\,|\,\cala_{\calp})\|_{L_1} \meg \\
\big|\int_B \big(\ave(f\,|\,\cala_{\calq})-\ave(f\,|\,\cala_{\calp})\big)\, d\bmu\big|
\stackrel{\eqref{e5.14}}{=} \big|\int_B \big(f-\ave(f\,|\,\cala_{\calp})\big)\, d\bmu\big|
\stackrel{\eqref{e5.13}}{\meg} \delta/2
\end{multline*}
as desired.
\end{proof}
\noindent 5.2. \textbf{Proof of Theorem \ref{t2.1}.} We now enter into the main part of the proof. We begin with the following lemma.
\begin{lem} \label{l5.3}
Let $0<\delta,\eta<1$ and $0<\sigma\mik 1$, and set $\vartheta=\delta(12Cr^2)^{-1}$ and $N=\lceil 4 ({p^{\dagger}}-1)^{-1}\sigma^2 \delta^{-2}\rceil$.
Also let $e\in\calh$, let $f\in L_1(\bbx,\calb_e,\boldsymbol{\mu})$ be nonnegative and $(C,\eta,p)$-regular, and let $\calp$ be a finite partition
of\, $\bbx$ with $\calp\subseteq\cals_{\partial e}$. Assume that
\begin{equation} \label{e5.15}
\eta\mik \big(\vartheta^{N} \cdot \iota(\calp)\big)^{q^N}
\end{equation}
and that $\mathscr{H}$ is $\eta$-nonatomic. Then there exists a refinement $\calq$ of\, $\calp$ with $\calq\subseteq\cals_{\partial e}$
and $\iota(\calq) \meg \big(\vartheta^{N} \cdot \iota(\calp)\big)^{q^N}$\!, such that either
\begin{enumerate}
\item[(a)] $\|\ave(f\,|\,\cala_{\calq})-\ave(f\,|\,\cala_{\calp})\|_{L_{p^{\dagger}}}>\sigma$, or
\item[(b)] $\|\ave(f\,|\,\cala_{\calq})-\ave(f\,|\,\cala_{\calp})\|_{L_{p^{\dagger}}}\mik\sigma$ and
$\|f-\ave(f\,|\,\cala_{\calq})\|_{\cals_{\partial e}}\mik\delta$.
\end{enumerate}
\end{lem}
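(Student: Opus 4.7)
My plan is to iterate Lemma \ref{l5.2}. Starting with $\calp_0=\calp$, I will construct a chain $\calp_0\preceq\calp_1\preceq\calp_2\preceq\cdots$ of refinements inside $\cals_{\partial e}$ by the following rule at stage $k$: if $\|\ave(f\,|\,\cala_{\calp_k})-\ave(f\,|\,\cala_\calp)\|_{L_{p^\dagger}}>\sigma$ I terminate, outputting $\calq=\calp_k$ and case (a); otherwise, if $\|f-\ave(f\,|\,\cala_{\calp_k})\|_{\cals_{\partial e}}\mik\delta$ I terminate, outputting $\calq=\calp_k$ and case (b); otherwise both stopping conditions fail, and (subject to the granularity hypothesis checked below) I invoke Lemma \ref{l5.2} to produce a refinement $\calp_{k+1}\subseteq\cals_{\partial e}$ with $\iota(\calp_{k+1})\meg(\vartheta\cdot\iota(\calp_k))^q$ and $\|\ave(f\,|\,\cala_{\calp_{k+1}})-\ave(f\,|\,\cala_{\calp_k})\|_{L_{p^\dagger}}\meg\delta/2$.

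A straightforward induction gives $\iota(\calp_k)\meg \vartheta^{q+q^2+\dots+q^k}\,\iota(\calp)^{q^k}$, and since $q\meg 1$ (as $q$ is the conjugate exponent of $p\in(1,\infty]$) one has $q+q^2+\dots+q^k\mik kq^k\mik Nq^N$ for every $k\mik N$. Hence $\iota(\calp_k)\meg(\vartheta^N\iota(\calp))^{q^N}$ throughout; combined with the hypothesis \eqref{e5.15}, this both delivers the granularity required of the output $\calq$, and verifies the hypothesis $\eta\mik(\vartheta\cdot\iota(\calp_k))^q$ of Lemma \ref{l5.2} at every stage $k\mik N-1$, since the same inductive estimate yields $(\vartheta\cdot\iota(\calp_k))^q\meg \vartheta^{q+\dots+q^{k+1}}\,\iota(\calp)^{q^{k+1}}\meg(\vartheta^N\iota(\calp))^{q^N}\meg\eta$.

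To force termination within $N$ stages, I use the fact that the increments $d_k\coloneqq\ave(f\,|\,\cala_{\calp_k})-\ave(f\,|\,\cala_{\calp_{k-1}})$ form a martingale difference sequence in $L_{p^\dagger}(\bbx,\bmu)$ relative to the filtration $(\cala_{\calp_k})_{k\meg 0}$: since $\calp_k$ refines $\calp_{k-1}$ we have $\cala_{\calp_{k-1}}\subseteq\cala_{\calp_k}$, and the tower property yields $\ave(d_k\,|\,\cala_{\calp_{k-1}})=0$. If the algorithm reached stage $N$ without terminating, failure of the first stopping criterion at stage $N$ would force $\|\sum_{k=1}^N d_k\|_{L_{p^\dagger}}=\|\ave(f\,|\,\cala_{\calp_N})-\ave(f\,|\,\cala_\calp)\|_{L_{p^\dagger}}\mik\sigma$, while each $\|d_k\|_{L_{p^\dagger}}\meg\delta/2$; Proposition \ref{p3.1}, valid since $1<p^\dagger\mik 2$, would then yield
$$N\cdot\tfrac{\delta^2}{4}\mik\sum_{k=1}^N\|d_k\|_{L_{p^\dagger}}^2\mik\frac{1}{p^\dagger-1}\,\big\|\sum_{k=1}^N d_k\big\|_{L_{p^\dagger}}^2\mik\frac{\sigma^2}{p^\dagger-1},$$
contradicting $N=\lceil 4(p^\dagger-1)^{-1}\sigma^2\delta^{-2}\rceil$. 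The main delicate point will be the coupling of two budgets — the $L_{p^\dagger}$ martingale energy capped by Proposition \ref{p3.1}, and the cell-measure that is contracted by the nonlinear map $t\mapsto(\vartheta t)^q$ at every refinement — whose interplay is precisely what dictates the form of the exponents appearing in \eqref{e5.15}.
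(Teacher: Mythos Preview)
Your proposal is correct and follows essentially the same approach as the paper: iterate Lemma~\ref{l5.2} to build a chain $\calp_0=\calp\preceq\calp_1\preceq\cdots$ of refinements in $\cals_{\partial e}$, track $\iota(\calp_k)$ through the recursion $\iota(\calp_{k+1})\meg(\vartheta\cdot\iota(\calp_k))^q$, and invoke Proposition~\ref{p3.1} on the martingale difference sequence $d_k=\ave(f\,|\,\cala_{\calp_k})-\ave(f\,|\,\cala_{\calp_{k-1}})$ to bound the number of iterations by $N$. The paper frames this as a proof by contradiction (assume alternative (a) never occurs, derive (b)), while you present it as an explicit algorithm with a termination analysis; these are logically equivalent, and the granularity bookkeeping you give matches the paper's.

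One cosmetic point: your final display yields $N\mik 4(p^{\dagger}-1)^{-1}\sigma^2\delta^{-2}$, which only contradicts $N=\lceil 4(p^{\dagger}-1)^{-1}\sigma^2\delta^{-2}\rceil$ when the argument of the ceiling is not an integer. The paper resolves this by using the strict inequality $\|d_k\|_{L_{p^{\dagger}}}>\delta/2$ (which the \emph{proof} of Lemma~\ref{l5.2} actually delivers, since the cut-norm hypothesis is strict), giving $\sigma<\|\ave(f\,|\,\cala_{\calp_N})-\ave(f\,|\,\cala_{\calp})\|_{L_{p^{\dagger}}}\mik\sigma$; you can make the same adjustment.
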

The case ``$p\meg 2$" in Lemma \ref{l5.3} can be proved with an ``energy increment strategy"\!, a method which originates from the
work of Szemer\'{e}di \cite{Sz1,Sz2} and is essentially based upon the fact that martingale difference sequences are orthogonal in $L_2$
(see, e.g., \cite{DK2,Tao1}). The general case follows a method introduced recently~in~\cite{DKK1} and relies on Proposition \ref{p3.1}
(see also \cite{DKT} for another application).
\begin{proof}[Proof of Lemma \emph{\ref{l5.3}}]
Assume that part (a) is not satisfied, that is,
\begin{enumerate}
\item[(H1)] for every refinement $\calq$ of $\calp$ with $\calq\subseteq\cals_{\partial e}$
and $\iota(\calq)\meg\big(\vartheta^{N}\cdot\iota(\calp)\big)^{q^N}$ we have
$\|\ave(f\,|\,\cala_{\calq})-\ave(f\,|\,\cala_{\calp})\|_{L_{p^{\dagger}}}\mik\sigma$.
\end{enumerate}
We claim that there exists a refinement $\calq$ of $\calp$ which satisfies the second part of the lemma. Indeed, if not, then,
by (H1) and Lemma \ref{l5.2}, we see that
\begin{enumerate}
\item[(H2)] for every refinement $\calq$ of $\calp$ with $\calq\subseteq\cals_{\partial e}$ and
$\iota(\calq)\meg\big(\vartheta^{N}\cdot \iota(\calp)\big)^{q^N}$  there exists a refinement $ \calr$ of $\calq$
with $\calr\subseteq \cals_{\partial e}$ and $\iota(\calr) \meg\big(\vartheta \cdot \iota(\calq)\big)^q$ such that
$\|(f\,|\,\cala_{\calr})-\ave(f\,|\,\cala_{\calq})\|_{L_{p^{\dagger}}}>\delta/2$.
\end{enumerate}
Recursively and using (H2), we select partitions $\calp_0,\dots,\calp_N$ of $\bbx$ with $\calp_0=\calp$
such that for every $i\in [N]$ we have: (P1) $\calp_i$ is a refinement of $\calp_{i-1}$ with $\calp_i\subseteq\cals_{\partial e}$
and $\iota(\calp_i)\meg \big(\vartheta \cdot \iota(\calp_{i-1})\big)^q$, and (P2)
$\|\ave(f\,|\,\cala_{\calp_i})-\ave(f\,|\,\cala_{\calp_{i-1}})\|_{L_{p^{\dagger}}}>\delta/2$.

Next, set $g=f-\ave(f\,|\,\cala_{\calp})$ and let $(d_i)_{i=0}^N$ be the difference sequence associated with the finite
martingale $\ave(g\,|\,\cala_{\calp_0}),\dots,\ave(g\,|\,\cala_{\calp_N})$. Notice that for every $i \in [N]$ we have
$d_i=\ave(f\,|\,\cala_{\calp_i})-\ave(f\,|\,\cala_{\calp_{i-1}})$ which implies, by (P2), that $\|d_i\|_{L_{p^{\dagger}}}>\delta/2$.
Therefore, by the choice of $N$ and Proposition \ref{p3.1},
\begin{eqnarray*}
\sigma \mik (p^{\dagger}-1)^{1/2}\,\frac{\delta}{2}\, N^{1/2} & < & (p^{\dagger}-1)^{1/2} \Big( \sum_{i=0}^N \|d_i\|_{L_{p^{\dagger}}}^2 \Big)^{1/2} \\
& \mik & \big\|\sum_{i=0}^N d_i \big\|_{L_{p^{\dagger}}}= \| \ave(f\,|\,\cala_{\calp_N})-\ave(f\,|\,\cala_{\calp})\|_{L_{p^{\dagger}}}.
\end{eqnarray*}
On the other hand, by (P1), we see that $\calp_N$ is a refinement of $\calp$ with $\calp_N\subseteq\cals_{\partial e}$
and $\iota(\calq)\meg\big(\vartheta^{N}\cdot\iota(\calp)\big)^{q^N}$ and so, by (H1),
$\|\ave(f\,|\,\cala_{\calp_N})-\ave(f\,|\,\cala_{\calp})\|_{L_{p^{\dagger}}} \mik \sigma$ which contradicts, of course, the above estimate.
The proof is thus completed.
\end{proof}
We introduce some numerical invariants. For every growth function $F\colon \nn\to\rr$ and every $0<\sigma\mik 1$ we define, recursively,
a sequence $(N_m)$ in $\nn$ and two sequences $(\eta_m)$ and $(\vartheta_m)$ in $(0,1]$ by setting $N_0=0$, $\eta_0=1$,
$\theta_0=\big(12Cr^2F(1)\big)^{-1}$ and
\begin{equation} \label{e5.16}
\begin{cases}
N_{m+1}=\lceil 4 (p^{\dagger}-1)^{-1} \sigma^2 F(\lceil\eta_m^{-1}\rceil)^2\rceil, \\
\eta_{m+1}= (\vartheta_m^{N_{m+1}} \cdot \eta_m)^{q^{N_{m+1}}}, \\
\vartheta_{m+1}=\big( 12Cr^2 F(\lceil\eta_{m+1}^{-1}\rceil)\big)^{-1}.
\end{cases}
\end{equation}
The following lemma is the last step of the proof of Theorem \ref{t2.1}.
\begin{lem} \label{l5.4}
Let  $0<\sigma\mik 1$ and $F\colon\nn\to \rr$ a growth function. Set
\begin{equation} \label{e5.17}
L=\lceil C^2(p^{\dagger}-1)^{-1} \sigma^{-2}n^r\rceil
\end{equation}
and let $(\eta_m)$ be as in \eqref{e5.16}. Let\, $0<\eta\mik \eta_{L}$ and assume that $\mathscr{H}$ is $\eta$-nonatomic.
For every $e\in\calh$ let $f_e\in L_1(\bbx,\calb_e,\boldsymbol{\mu})$ be nonnegative and $(C,\eta,p)$-regular. Then there exist:
\emph{(i)} a positive integer $m\in\{0,\dots,L-1\}$, \emph{(ii)} for every $e\in\calh$ a partition $\calp_{\!e}$ of\, $\bbx$ with
$\calp_{\!e}\subseteq \cals_{\partial e}$ and $\iota(\calp_{\!e})\meg \eta_m$, and \emph{(iii)} for every $e\in\calh$ a refinement
$\calq_e$ of\, $\calp_{\!e}$ with $\calq_e\subseteq \cals_{\partial e}$ and $\iota(\calq_e)\meg \eta_{m+1}$, such that for every
$e\in\calh$ we have $\|\ave(f_e\,|\,\cala_{\calq_e})-\ave(f_e\,|\,\cala_{\calp_{\!e}})\|_{L_{p^\dagger}}\mik\sigma$ and
$\|f_e-\ave(f_e\,|\,\cala_{\calq_e})\|_{\cals_{\partial e}}\mik 1/F(\lceil\eta_m^{-1}\rceil)$.
\end{lem}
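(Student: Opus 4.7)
The plan is an energy-increment argument: iteratively refine, for each edge $e\in\calh$, a partition of $\bbx$ by means of Lemma \ref{l5.3}, and show that either the process terminates within $L$ steps producing the required output, or an application of Proposition \ref{p3.1} forces a contradiction.

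\textbf{Construction.} First I would set $\calp_e^0=\{\bbx\}$ for every $e\in\calh$, so that $\iota(\calp_e^0)=1=\eta_0$. At each step $m\in\{0,\dots,L-1\}$, for every $e\in\calh$ I would apply Lemma \ref{l5.3} to $\calp_e^m$ with parameters $\delta_m:=1/F(\lceil\eta_m^{-1}\rceil)$ and the given $\sigma$; this choice makes $\vartheta$ and $N$ from the lemma equal to the quantities $\vartheta_m$ and $N_{m+1}$ of \eqref{e5.16}, and hypothesis \eqref{e5.15} is satisfied because $\iota(\calp_e^m)\meg\eta_m$ combined with the recursive definition of $\eta_{m+1}$ and the assumption $\eta\mik\eta_L\mik\eta_{m+1}$ yields $\eta\mik (\vartheta_m^{N_{m+1}}\cdot\iota(\calp_e^m))^{q^{N_{m+1}}}$. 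If at the current step option (b) of Lemma \ref{l5.3} holds simultaneously for every $e\in\calh$, then the partitions $\calp_{\!e}:=\calp_e^m$ together with the refinements $\calq_e$ supplied by the lemma fulfil all the conclusions of Lemma \ref{l5.4}. Otherwise I fix one edge $e_m^\star\in\calh$ for which option (a) holds, set $\calp_{e_m^\star}^{m+1}$ to be the refinement provided by Lemma \ref{l5.3}, and $\calp_e^{m+1}:=\calp_e^m$ for every $e\neq e_m^\star$; in either case $\iota(\calp_e^{m+1})\meg\eta_{m+1}$.

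\textbf{Termination via Proposition \ref{p3.1}.} Assume, towards a contradiction, that the process runs through all $L$ rounds without stopping. Fix $e\in\calh$; since $\calp_e^{m+1}$ refines $\calp_e^m$, the sequence $(f_m)_{m=0}^L$ defined by $f_m:=\ave(f_e\,|\,\cala_{\calp_e^m})$ is a martingale, with associated difference sequence $(d_m)_{m=0}^L$ given by $d_0=f_0$ and $d_m=f_m-f_{m-1}$ for $m\meg 1$. Because $1<p^\dagger\mik 2$, Proposition \ref{p3.1} applies; combining it with the monotonicity of $L_q$-norms on a probability space and with the bound $\|f_L\|_{L_p}\mik C$ (which follows from $\iota(\calp_e^L)\meg\eta_L\meg\eta$ and the $(C,\eta,p)$-regularity of $f_e$), I would obtain
\[
\sum_{m=0}^L \|d_m\|_{L_{p^\dagger}}^2 \mik \frac{1}{p^\dagger-1}\,\|f_L\|_{L_{p^\dagger}}^2 \mik \frac{C^2}{p^\dagger-1}.
\]
Let $N_e$ denote the number of $m\in\{0,\dots,L-1\}$ with $e_m^\star=e$. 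For each such $m$, option (a) of Lemma \ref{l5.3} forces $\|d_{m+1}\|_{L_{p^\dagger}}>\sigma$, so $N_e\sigma^2\mik\sum_{m=0}^L\|d_m\|_{L_{p^\dagger}}^2\mik C^2/(p^\dagger-1)$.

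\textbf{Pigeonhole.} Summing over $e\in\calh$ and using the strict estimate $|\calh|\mik\binom{n}{r}<n^r$ (valid since $n\meg r\meg 2$), I would conclude
\[
\sum_{e\in\calh}N_e \mik \frac{C^2|\calh|}{(p^\dagger-1)\sigma^2} < \frac{C^2 n^r}{(p^\dagger-1)\sigma^2} \mik L.
\]
On the other hand, the assumption that the process never stops means that exactly one witness $e_m^\star$ is selected at every step $m\in\{0,\dots,L-1\}$, whence $\sum_{e\in\calh}N_e=L$, a contradiction. The main subtle points I anticipate are: carefully checking that hypothesis \eqref{e5.15} of Lemma \ref{l5.3} propagates through all $L$ rounds via the recursive definition \eqref{e5.16}, and including $d_0$ (rather than only $d_1,\dots,d_L$) in the martingale difference sequence, since $\sum d_m=f_L$ yields the sharp bound $C^2/(p^\dagger-1)$ instead of $(2C)^2/(p^\dagger-1)$, which is precisely what makes the pigeonhole in $L$ tight.
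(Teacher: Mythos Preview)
Your proposal is correct and follows essentially the same approach as the paper's proof: iterate Lemma \ref{l5.3}, and if the process fails to terminate within $L$ rounds, apply Proposition \ref{p3.1} to the martingale $\big(\ave(f_e\,|\,\cala_{\calp_e^m})\big)_{m=0}^L$ together with the $(C,\eta,p)$-regularity bound $\|\ave(f_e\,|\,\cala_{\calp_e^L})\|_{L_{p^\dagger}}\mik C$ to reach a contradiction. The only cosmetic differences are that the paper refines all partitions at every step (rather than freezing the non-witness edges) and phrases the final contradiction via pigeonhole on a single edge $\boldsymbol{e}$ with $|I|\meg L/n^r$ increments, whereas you sum the per-edge bound $N_e\sigma^2\mik C^2/(p^\dagger-1)$ over $\calh$; both routes exploit exactly the strict inequality $|\calh|\mik\binom{n}{r}<n^r$.
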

\begin{proof}
It is similar to the proof of Lemma \ref{l5.3} and so we will briefly sketch the argument. If the lemma is false,
then using Lemma \ref{l5.3} we select for every $e\in\calh$ partitions $\calp_0^e,\dots,\calp_{L}^e$ of $\bbx$ with
$\calp_0^e=\{\bbx\}$ as well as $e_1,\dots,e_L\in\calh$ such that for every $m\in [L]$ we have: (P1) $\calp_m^e$ is a refinement
of $\calp_{m-1}^e$ with $\calp_{m}^e\subseteq \cals_{\partial e}$ and $\iota(\calp_m^e)\meg \eta_{m}$ for every $e\in\calh$,
and (P2) $\|\ave(f_{e_m}\,|\,\cala_{\calp_m^{e_m}})-\ave(f_{e_m}\,|\,\cala_{\calp_{m-1}^{e_m}})\|_{L_{p^{\dagger}}}>\sigma$.
By the pigeonhole principle, there exist $\boldsymbol{e}\in\calh$ and $I\subseteq [L]$ with $|I|\meg L/n^r$, such that
$\boldsymbol{e}=e_m$ for every $m\in I$. Let $(d_m)_{m=0}^L$ be the difference sequence associated with the finite martingale
$\ave(f_{\boldsymbol{e}}\,|\,\cala_{\calp_0^{\boldsymbol{e}}}),\dots,\ave(f_{\boldsymbol{e}}\,|\,\cala_{\calp_L^{\boldsymbol{e}}})$
and notice that $\|d_m\|_{L_{p^{\dagger}}}>\sigma$ for every $m\in I$. Moreover, since $f_{\boldsymbol{e}}$ is
$(C,\eta,p)$-regular and $\iota(\calp_m^{\boldsymbol{e}})\meg \eta_m\meg \eta_L\meg \eta$ we see that
$\|\ave(f_{\boldsymbol{e}}\,|\,\cala_{\calp_m^{\boldsymbol{e}}})\|_{L_{p^{\dagger}}}\mik
\|\ave(f_{\boldsymbol{e}}\,|\,\cala_{\calp_m^{\boldsymbol{e}}})\|_{L_p}\mik C$ for every $m\in [L]$. Hence, by the choice of
$L$ in \eqref{e5.17} and Proposition \ref{p3.1}, we conclude that
\[ C < (p^{\dagger}-1)^{1/2}\Big( \sum_{m=0}^L \|d_m\|^2_{L_{p^{\dagger}}} \Big)^{1/2} \mik \big\|\sum_{m=0}^L d_m \big\|_{L_{p^{\dagger}}}
= \|\ave(f_{\boldsymbol{e}}\,|\,\cala_{\calp_L^{\boldsymbol{e}}})\|_{L_{p^{\dagger}}} \mik C \]
which is clearly a contradiction.
\end{proof}
We are ready to complete the proof of Theorem \ref{t2.1}.
\begin{proof}[Proof of Theorem \emph{\ref{t2.1}}]
Let $F\colon\nn\to \rr$ be a growth function and $0<\sigma\mik 1$, and let $L$ and $\eta_L$ be as in \eqref{e5.17} and \eqref{e5.16}
respectively. We set $\mathrm{Reg}=\lceil\eta_L^{-1}\rceil$ and we claim that with this choice the result follows. Indeed, set
$\eta\coloneqq 1/\mathrm{Reg}\mik \eta_L$ and assume that $\mathscr{H}$ is $\eta$-nonatomic. For every $e\in\calh$ let
$f_e\in L_1(\bbx,\calb_e,\bmu)$ be nonnegative and $(C,\eta,p)\text{-regular}$. Let $m\in\{0,\dots,L-1\}$,
$\langle \calp_{\!e}:e\in\calh\rangle$ and $\langle \calq_e:e\in\calh\rangle$ be as in Lemma \ref{l5.4} and define
$M=\lceil\eta_m^{-1}\rceil$. It is clear that $M$, $\langle \calp_{\!e}:e\in\calh\rangle$ and $\langle \calq_e:e\in\calh\rangle$
are as desired.
\end{proof}


\section{Pseudorandom families}

\numberwithin{equation}{section}

Throughout this section let $n,r$ be two positive integers with $n\meg r\meg 2$ and let
$\mathscr{H}=(n,\langle(X_i,\Sigma_i,\mu_i):i\in [n]\rangle,\calh)$ be an $r$-uniform hypergraph system.
\medskip

\noindent 6.1. \textbf{Definitions and basic properties.} We begin by introducing some pieces of notation.
Let $e\subseteq [n]$ be nonempty and recall that $\pi_e\colon\bbx\to\bbx_e$ stands for the natural projection. Observe
that for every $f\in L_1(\bbx,\calb_e,\bmu)$ there exists a unique random variable $\bfu\in L_1(\bbx_e,\bbs_e,\bmu_e)$ such that
\begin{equation} \label{e6.1}
f=\bfu\circ \pi_e
\end{equation}
and note that the map $L_1(\bbx,\calb_e,\bmu)\ni f\mapsto \bfu\in L_1(\bbx_e,\bbs_e,\bmu_e)$ is a linear isometry.
If, in addition, the set $[n]\setminus e$ is nonempty, then for every $\bx_e\in\bbx_e$ and every
$\bx_{[n]\setminus e}\in\bbx_{[n]\setminus e}$ by $(\bx_e,\bx_{[n]\setminus e})$ we denote the unique element
$\bx$ of $\bbx$ such that $\bx_e=\pi_e(\bx)$ and $\bx_{[n]\setminus e}=\pi_{[n]\setminus e}(\bx)$. Finally,
for every $f\colon \bbx\to\rr$ and every $\bx_e\in\bbx_e$ let $f_{\bx_e}\colon\bbx_{[n]\setminus e}\to \rr$ be
the section of $f$ at $\bx_e$, that is, $f_{\bx_e}(\bx_{[n]\setminus e})=f(\bx_e,\bx_{[n]\setminus e})$.
\begin{defn} \label{d6.1}
Let $C\meg 1$ and\, $0<\eta<1$. Also let $1<p\mik\infty$ and let $q$ denote the conjugate exponent of $p$.
For every $e\in\calh$ let $\nu_e\in L_1(\bbx,\calb_e,\bmu)$ be a nonnegative random variable. We say that the family
$\langle \nu_e:e\in\calh\rangle$ is \emph{$(C,\eta,p)$-pseudorandom} provided that the following conditions are satisfied.
\begin{enumerate}
\item [(C1)] \emph{(Copies of sub-hypergraphs of $\calh$)} For every nonempty $\calg\subseteq\calh$ we have
\begin{equation} \label{e6.2}
\int \prod_{e\in\calg} \nu_e\,d\bmu\meg 1-\eta.
\end{equation}
\item [(C2)] For every $e\in\calh$ there exists $\psi_e\in L_p(\bbx,\calb_e,\bmu)$ with $\|\psi_e\|_{L_p} \mik C$
and satisfying the following properties.
\begin{enumerate}
\item[(a)] \emph{(The cut norm of $\nu_e-\psi_e$ is negligible)} We have $\|\nu_e-\psi_e\|_{\cals_{\partial e}}\mik \eta$.
\item[(b)] \emph{(Local linear forms)} For every $j\in\{0,1\}$ and every $e'\in\calh\setminus\{e\}$ let
$g_{e'}^{(j)}\in L_1(\bbx,\calb_{e'},\bmu)$ such that either $0 \mik g_{e'}^{(j)} \mik \nu_{e'}$ or\, $0 \mik g_{e'}^{(j)}\mik 1$.
If\, $\boldsymbol{\nu}_e$ and $\boldsymbol{\psi}_e$ are as in \eqref{e6.1} for $\nu_e$ and $\psi_e$ respectively, then
\begin{equation} \label{e6.3}
\big| \int (\boldsymbol{\nu}_e-\boldsymbol{\psi}_e)(\bx_e) \prod_{j\in\{0,1\}} \Big(\int \prod_{e'\in\calh\setminus\{e\}}
(g_{e'}^{(j)})_{\bx_e}\,d\bmu_{[n]\setminus e}\Big)\,  d\bmu_e\big| \mik \eta.
\end{equation}
\end{enumerate}
\item[(C3)] \emph{(Integrability of marginals)} Let $e\in\calh$ and let $\calg\subseteq\calh\setminus \{e\}$ be nonempty,
and define $\boldsymbol{\nu}_{e,\calg}\colon\bbx_e \to \rr$ by\,
$\boldsymbol{\nu}_{e,\calg}(\bx_e)=\int\prod_{e'\in\calg}(\nu_{e'})_{\bx_e}\, d\bmu_{[n]\setminus e}$. Then, setting
\begin{equation} \label{e6.4}
\ell \coloneqq \min\Big\{2n :n\in\nn \text{ and } 2n \meg 2q + \Big(1 -\frac{1}{C }\Big) + \frac{1}{p}\Big\}
\end{equation}
$($where $1/p=0$ if $p=\infty$$)$, we have
\begin{equation} \label{e6.5}
\int \boldsymbol{\nu}_{e,\calg}^{\ell}\, d\bmu_e \mik C+\eta.
\end{equation}
\end{enumerate}
\end{defn}
As we have noted, examples of pseudorandom families are presented in \cite{DKK2}. Here, as an illustration, we will describe
a class of these examples which are quite easy to grasp. (The verification of their properties, although not particularly difficult,
is somewhat lengthy and is given in \cite{DKK2}.) They are of the form $\langle \nu_e+\varphi_e:e\in\calh\rangle \text{ where}$:
(i) $\calh$ is the $r$-simplex, that is, the complete $r$-uniform hypergraph on $[r+1]\text{, (ii) each}$ $\nu_e\in L_1(\bbx,\calb_e,\bmu)$
is nonnegative and the family $\langle \nu_e:e\in\calh\rangle$ satisfies a slight modification of the ``linear forms condition"
introduced in \cite[Definition 2.8]{Tao2}, and (iii) each $\varphi_e\in L_p(\bbx,\calb_e,\bmu)$ is also nonnegative and satisfies
$\|\boldsymbol{\varphi}_e^p\|_{\square_{\ell}(\bbx_e)}\mik C^p$ where $\ell$ is as in \eqref{e6.4} and $\|\cdot\|_{\square_{\ell}(\bbx_e)}$
is a variant of the Gowers box norm \cite{Go1} which was first considered by Hatami \cite{Ha}. In other words, the family
$\langle \nu_e+\varphi_e:e\in\calh\rangle$ is a perturbation of a system of measures which appears in \cite{Tao2}, the main
point being that only integrability conditions are imposed on each ``noise" $\varphi_e$.

We now proceed to discuss some properties of pseudorandom families.
\medskip

\noindent 6.1.1. First observe that condition (C1) expresses a natural combinatorial requirement, namely that the weighted hypergraph
$\langle \nu_e:e\in\calh\rangle$ contains many copies of every sub-hypergraph of $\calh$.
\medskip

\noindent 6.1.2. Condition (C2.a) is also rather mild and implies that each $\nu_e$ is, to some extend, well-behaved.
Specifically, we have the following lemma.
\begin{lem} \label{l6.2}
If the family $\langle\nu_e:e\in\calh\rangle$ satisfies condition \emph{(C2.a)}, then for every $e\in \calh$
the random variable $\nu_e$ is $(C+1,\eta,p)$-regular.
\end{lem}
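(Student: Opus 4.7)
The plan is to split $\nu_e$ into its $L_p$-bounded approximant $\psi_e$ and the cut-norm-small error $\nu_e-\psi_e$, and then exploit the fact that conditional expectation onto $\cala_{\calp}$ is an $L_p$-contraction for the first piece and is uniformly bounded by the cut norm for the second.

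More precisely, fix an edge $e\in\calh$, and let $\psi_e\in L_p(\bbx,\calb_e,\bmu)$ be the function provided by condition (C2.a), so that $\|\psi_e\|_{L_p}\mik C$ and $\|\nu_e-\psi_e\|_{\cals_{\partial e}}\mik \eta$. Let $\calp\subseteq\cals_{\partial e}$ be any partition of $\bbx$ with $\bmu(A)\meg \eta$ for every $A\in\calp$. By linearity,
\[
\ave(\nu_e\,|\,\cala_{\calp}) = \ave(\psi_e\,|\,\cala_{\calp}) + \ave(\nu_e-\psi_e\,|\,\cala_{\calp}),
\]
so by the triangle inequality it suffices to bound each summand.

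For the first summand, conditional expectation is a contraction on $L_p$ (Jensen's inequality), hence $\|\ave(\psi_e\,|\,\cala_{\calp})\|_{L_p}\mik \|\psi_e\|_{L_p}\mik C$. For the second summand, observe that on each cell $A\in\calp$ the function $\ave(\nu_e-\psi_e\,|\,\cala_{\calp})$ is the constant $\bmu(A)^{-1}\int_A (\nu_e-\psi_e)\,d\bmu$. Since $A\in\cals_{\partial e}$ and $\bmu(A)\meg\eta$, the definition of the cut norm in \eqref{e2.1} together with (C2.a) yields
\[
\Big|\frac{1}{\bmu(A)}\int_A (\nu_e-\psi_e)\,d\bmu\Big| \mik \frac{\|\nu_e-\psi_e\|_{\cals_{\partial e}}}{\bmu(A)}\mik \frac{\eta}{\eta}=1.
\]
Hence $\|\ave(\nu_e-\psi_e\,|\,\cala_{\calp})\|_{L_{\infty}}\mik 1$, and since $\bmu$ is a probability measure this gives $\|\ave(\nu_e-\psi_e\,|\,\cala_{\calp})\|_{L_p}\mik 1$ for every $1\mik p\mik\infty$.

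Combining the two estimates yields $\|\ave(\nu_e\,|\,\cala_{\calp})\|_{L_p}\mik C+1$, which is precisely the $(C+1,\eta,p)$-regularity of $\nu_e$. There is no real obstacle here; the only subtle point to notice is the interplay between the lower bound $\bmu(A)\meg\eta$ on cell sizes and the cut-norm bound $\|\nu_e-\psi_e\|_{\cals_{\partial e}}\mik\eta$, which exactly cancel to produce a universal $L_{\infty}$ bound of $1$ on the error term regardless of $p$.
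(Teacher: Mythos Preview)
Your proof is correct and follows essentially the same approach as the paper: decompose $\nu_e=\psi_e+(\nu_e-\psi_e)$, use the $L_p$-contractivity of conditional expectation on the first piece, and for the second piece use the cut-norm bound together with $\bmu(A)\meg\eta$ to get an $L_\infty$ bound of $1$ on $\ave(\nu_e-\psi_e\,|\,\cala_{\calp})$, then combine via the triangle inequality. The argument and the key observation about the cancellation between $\eta$ in the numerator and denominator are identical to the paper's proof.
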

\begin{proof}
Let $e\in\calh$ and let $\calp$ be a partition of $\bbx$ with $\calp\subseteq\cals_{\partial e}$ and $\bmu(P)\meg\eta$
for every $P\in\calp$. By condition (C2.a), for every $P\in\calp$ we have
\[\frac{|\int_P (\nu_e - \psi_e) \,d\bmu|}{\bmu(P)}\mik 1\]
and, consequently, $\|\ave(\nu_e-\psi_e\,|\,\cala_{\calp})\|_{L_\infty}\mik 1$. Therefore, by the triangle inequality
and the monotonicity of the $L_p$ norms, we conclude that
\[ \|\ave(\nu_e\,|\,\cala_{\calp})\|_{L_p} \mik \|\ave(\psi_e\,|\,\cala_{\calp})\|_{L_p}+
\|\ave(\nu_e-\psi_e\,|\,\cala_{\calp})\|_{L_p} \mik C+1 \]
and the proof is completed.
\end{proof}
Condition (C2.b), the local linear forms condition, is the strongest (and as such, the most restrictive) condition of all.
In the case where $\psi_e=1$ for every $e\in\calh$ it was explicitly isolated\footnote{We remark that in \cite{CFZ} condition (C2.b)
is referred to as the ``strong linear forms" condition.} in \cite[Lemma 6.3]{CFZ}, though closely related variants had appeared
earlier in the work of Green and Tao \cite{GT1}. One of the signs of the strength of the local linear forms condition is that it
implies condition (C2.a) as long as the hypergraph $\calh$ is not too sparse. More precisely, assume that for every $e\in\calh$
we have $\partial e\subseteq\{e'\cap e: e'\in\calh\}$ (this is the case, for instance, if $\calh$ is the $r$-simplex).
Fix~$e\in\calh$ and for every $f\in\partial e$ let $A_f\in\calb_f$. We set $g_{e'}^{(0)}=\mathbf{1}_{A_{f}}$ if $e'\cap e=f$;
otherwise, let $g_{e'}^{(j)}=1$. By \eqref{e6.3}, we see that
$|\int (\nu_e-\psi_e)\prod_{f \in \partial e}\mathbf{1}_{A_{f}}\,d\bmu |\mik\eta$ which implies, of course, that
$\|\nu_e-\psi_e\|_{\cals_{\partial e}}\mik \eta$.
\medskip

\noindent 6.1.3. Condition (C3) can be seen as an instance of the general fact that by taking averages we improve integrability.
In a nutshell, we demand that the marginal $\boldsymbol{\nu}_{e,\calg}$ belongs to $L_\ell(\bbx_e,\bbs_e,\bmu_e)$ where $\ell$
is an even integer strictly bigger than~$2q$, though in the special case $C=1$ and $p=\infty$ we can simply take $\ell=2$.
The exact choice of $\ell$ in \eqref{e6.4} expresses these requirements in a formal way, but the reader may actually think
of\footnote{Note that this simplification is not far from the truth. Indeed, if $q$ is an integer and $C>1$, then, by \eqref{e6.4},
we have that $\ell=2(q+1)$. It is also useful to point out that in the proofs of Theorems \ref{t2.2} and \ref{t7.1} (which are our
main results concerning pseudorandom families), we will not use the fact that the number $\ell$ in \eqref{e6.4} is an even integer.
However, this property is needed when one is working with the Gowers box norms and their variants (see, e.g.,
\cite[Proposition 2.1]{DKK2}), and so it is reasonable to add this harmless condition in Definition \ref{d6.1}.} the parameter
$\ell$ as being equal to $2(q+1)$.

The following lemma reduces condition (C3) to certain moment estimates. We point out that these estimates are needed
for the analysis of pseudorandom families.
\begin{lem} \label{l6.3}
If the family $\langle\nu_e:e\in\calh\rangle$ satisfies condition \emph{(C3)}, then for every $e\in\calh$ and every
nonempty $\calg\subseteq\calh\setminus\{e\}$ the following hold.
\begin{enumerate}
\item[(a)] If either $C>1$ or $1<p<\infty$, then $\ell>2q$ and for every $\bfca\in\bbs_e$ we have
\begin{equation} \label{e6.6}
\int_\bfca \boldsymbol{\nu}_{e,\calg}^{2q}\, d\bmu_e \mik (C+1)\,\bmu_{e}(\bfca)^{e(C,p)}
\end{equation}
where $e(C,p)=(4pq)^{-1}$ if\, $1<p<\infty$, and $e(C,\infty)=1/2$ if $C>1$.
\item[(b)] Assume that the family $\langle\nu_e:e\in\calh\rangle$ also satisfies condition \emph{(C1)},
and that $C=1$ and $p=\infty$. Then $\ell =2$ and $\|\bnu_{e,\calg} - 1\|^2_{L_2} \mik 4\eta^{1/2}$.
In particular, for every $\bfca\in\bbs_e$ we have
\begin{equation} \label{e6.7}
\int_\bfca \boldsymbol{\nu}_{e,\calg}^2\,d\bmu_e \mik 2\bmu_e(\bfca) + 8\eta^{1/2}.
\end{equation}
\end{enumerate}
\end{lem}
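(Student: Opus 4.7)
Both parts reduce to combining condition (C3) with Hölder's inequality. For part (a), I would first verify $\ell > 2q$: in the sub-case $1 < p < \infty$, the term $1/p$ in \eqref{e6.4} gives $\ell \meg 2q + 1/p > 2q$, whereas in the sub-case $p = \infty$ with $C > 1$ one has $q = 1$ and the term $1 - 1/C > 0$ forces $\ell$ to be the smallest even integer exceeding $2 + (1-1/C)$, namely $\ell = 4$. Then Hölder with conjugate exponents $\ell/(2q)$ and $\ell/(\ell - 2q)$ yields
\[
\int_{\bfca} \bnu_{e,\calg}^{2q}\, d\bmu_e \mik \Big( \int \bnu_{e,\calg}^{\ell}\, d\bmu_e \Big)^{2q/\ell} \bmu_e(\bfca)^{1 - 2q/\ell},
\]
and (C3) controls the first factor by $(C+\eta)^{2q/\ell} \mik (C+1)^{2q/\ell} \mik C+1$ (since $2q/\ell < 1$). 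What remains is to check that $1 - 2q/\ell \meg e(C,p)$: this is immediate when $p = \infty$ and $C > 1$ (equality holds since $\ell = 4$), and for $1 < p < \infty$ it reduces, after plugging in $\ell \meg 2q + 1/p$ and a short algebraic manipulation, to the trivial bound $2pq \meg 1$.

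For part (b), the hypothesis $C = 1$, $p = \infty$ makes the threshold in \eqref{e6.4} equal to $2q = 2$, so $\ell = 2$. Fubini combined with (C1) gives
\[
\int \bnu_{e,\calg}\, d\bmu_e = \int \prod_{e' \in \calg} \nu_{e'}\, d\bmu \meg 1 - \eta,
\]
while (C3) with $\ell = 2$ gives $\int \bnu_{e,\calg}^{2}\, d\bmu_e \mik 1 + \eta$. Expanding $\|\bnu_{e,\calg} - 1\|_{L_2}^{2} = \int \bnu_{e,\calg}^{2}\, d\bmu_e - 2\int \bnu_{e,\calg}\, d\bmu_e + 1$ then yields the crude bound $3\eta$, which is at most $4\eta^{1/2}$ for $\eta \in (0,1)$. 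For the consequence \eqref{e6.7}, I would use the elementary inequality $\bnu_{e,\calg}^{2} \mik 2(\bnu_{e,\calg} - 1)^{2} + 2$, integrate over $\bfca$, and apply the $L_2$ estimate just established to obtain $\int_{\bfca} \bnu_{e,\calg}^{2}\, d\bmu_e \mik 2\bmu_e(\bfca) + 6\eta \mik 2\bmu_e(\bfca) + 8\eta^{1/2}$.

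There is no serious obstacle; the only calculation requiring care is the exponent verification in part (a) for $1 < p < \infty$, where one must confirm that the slack built into the definition of $\ell$ through the $1/p$ summand is exactly enough to reproduce the target exponent $(4pq)^{-1}$ on $\bmu_e(\bfca)$. Everything else reduces to routine manipulations of integrals.
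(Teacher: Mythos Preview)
Your proposal is correct and follows essentially the same approach as the paper: H\"older's inequality with exponents $\ell/(2q)$ and $\ell/(\ell-2q)$ for part (a), and the expansion of $\|\bnu_{e,\calg}-1\|_{L_2}^2$ for part (b). The only difference is cosmetic: in part (b) the paper routes through Jensen to bound $\int\bnu_{e,\calg}\,d\bmu_e\mik(1+\eta)^{1/2}$ and then uses the triangle inequality in $L_2$ for \eqref{e6.7}, whereas you plug the first-moment and second-moment bounds directly into the expansion (obtaining the sharper $3\eta$) and use the pointwise inequality $\bnu_{e,\calg}^2\mik 2(\bnu_{e,\calg}-1)^2+2$ for \eqref{e6.7}; both are equivalent elementary manipulations.
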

\begin{proof}
(a) The fact that $\ell> 2q$ follows immediately by \eqref{e6.4}. Next, fix $\bfca\in\bbs_e$. By  H\"older's inequality, we have
\begin{equation} \label{e6.8}
\int_\bfca \boldsymbol{\nu}_{e,\calg}^{2q}\,d \bmu_e \mik \|\boldsymbol{\nu}_{e,\calg}\|_{L_\ell}^{2q} \cdot
\bmu_e(\bfca)^{1-\frac{2q}{\ell}} \stackrel{\eqref{e6.5}}{\mik} (C+1)\,\bmu_e(\bfca)^{1-\frac{2q}{\ell}}.
\end{equation}
On the other hand, by \eqref{e6.4} and the choice of $e(C,p)$, we see that $1-\frac{2q}{\ell}\meg e(C,p)$.
By \eqref{e6.8}, the proof of part (a) is completed.
\medskip

\noindent (b) First observe that $\ell =2$. Moreover, by Fubini's theorem and Jensen's inequality,
\[ 1-\eta \stackrel{\mathrm{(C1)}}{\mik} \int\prod_{e'\in\calg}\nu_{e'}\,d\bmu =\int \boldsymbol{\nu}_{e,\calg}\,d\bmu_e \mik
\Big( \int \boldsymbol{\nu}_{e,\calg}^2\,d\bmu_e \Big)^{1/2} \stackrel{\mathrm{(C3)}}{\mik} (1+\eta)^{1/2} \]
and, consequently, $|\int(\boldsymbol{\nu}_{e,\calg}^2-1)\, d\bmu_e|\mik 2\eta$\, and
$|\int (\boldsymbol{\nu}_{e,\calg}-1)\,d \bmu_e| \mik \eta^{1/2}$. Therefore,
\begin{eqnarray} \label{e6.9}
\|\bnu_{e,\calg} - 1\|_{L_2}^2 & = & \int (\bnu_{e,\calg}^2 - 2\,\bnu_{e,\calg} +1)\,d\bmu_e \\
& \mik & \big|\int(\bnu_{e,\calg}^2 -1)\,d\bmu_e\big| + 2\,\big|\int (\bnu_{e,\calg} -1)\,d\bmu_e\big| \mik 4\eta^{1/2}. \nonumber
\end{eqnarray}
Now let $\bfca\in\bbs_e$ and note that $\|\bnu_{e,\calg}\cdot\mathbf{1}_\bfca-\mathbf{1}_\bfca\|_{L_2}\mik \|\bnu_{e,\calg}- 1\|_{L_2}$.
Hence, by \eqref{e6.9} and the triangle inequality, we have $\|\bnu_{e,\calg}\cdot \mathbf{1}_\bfca\|_{L_2} \mik \|\mathbf{1}_\bfca\|_{L_2} +
(4\eta^{1/2})^{1/2}$ and so
\[ \int_\bfca \bnu_{e,\calg}^2 \,d\bmu_e\mik \big( \bmu_e(\bfca)^{1/2} + (4\eta^{1/2})^{1/2}\big)^2\mik 2\bmu_e(\bfca) + 8\eta^{1/2} \]
as desired.
\end{proof}

\noindent 6.2. \textbf{Conditions on the majorants.} Although for the analysis of pseudorandom families we need precisely conditions
(C1)--(C3), in practice some of these conditions are not so easily checked. This is the case, for instance, with the local linear forms
condition, since it requires verifying the estimate in \eqref{e6.3} not only for the ``majorants" $\langle \nu_e:e\in\calh\rangle$ but
also for all nonnegative functions which are pointwise bounded by them. However, this problem can be effectively resolved by imposing
some slightly stronger conditions on $\langle \nu_e:e\in\calh\rangle$, and then reducing \eqref{e6.3} to these conditions by repeated
applications of the Cauchy--Schwarz inequality. This method was developed extensively by Green and Tao \cite{GT1,GT2} and has become
standard in the field. As such, our presentation in this subsection will be rather brief.

First we need to introduce some notation. Recall that by $n,r$ we denote two positive integers with $n\meg r\meg 2$
and $\mathscr{H}=(n,\langle(X_i,\Sigma_i,\mu_i):i\in [n]\rangle,\calh)$ stands for an $r$-uniform hypergraph system.
For every $m\in\nn$ with $m\meg 2$ and every nonempty $e\subseteq [n]$ let $(\bbx_e^m,\bbs_e^m,\bmu_e^m)$ be the product
of $m$ copies of the probability space $(\bbx_e,\bbs_e,\bmu_e)$. (If $e=[n]$, then this product will be denoted by
$(\bbx^m,\bbs^m,\bmu^m)$.) Next, let $h\subseteq [n]$ with $e\subseteq h$. For every
$\bx_h^{(0)}=(x_i^{(0)})_{i\in h},\dots,\bx_h^{(m -1)}=(x_i^{(m-1)})_{i\in h}$ in $\bbx_h$ and every
$\omega=(\omega_i)_{i\in e}\in\{0,\ldots,m-1\}^e$ we set
\[ \bx_e^{(\omega)}=(x_i^{(\omega_i)})_{i \in e}\in\bbx_e. \]
Note that if $\omega=j^e$ for some $j\in\{0,\dots,m-1\}$ (that is, $\omega=(\omega_i)_{i\in e}$ with $\omega_i=j$ for every $i\in e$),
then $\bx_e^{(\omega)}=\pi_{h,e}(\bx_h^{(j)})$ where $\pi_{h,e}\colon \bbx_h\to\bbx_e$ is the natural projection.
\begin{prop} \label{p6.4}
Let $C\meg 1$ and\, $0<\eta<1$. Also let $1<p\mik\infty$ and let $q$ denote the conjugate exponent of $p$.
For every $e\in\calh$ let $\nu_e \in L_1(\bbx,\calb_e,\bmu)$ be a nonnegative random variable and let $\bnu_e$ be as in \eqref{e6.1}
for $\nu_e$. Assume that the following properties are satisfied.
\begin{enumerate}
\item[(P1)] If $\ell$ is as in \eqref{e6.4}, then
\[ 1-\eta \mik \int \prod_{e\in\calh} \prod_{\omega \in\{0,\dots,\ell-1\}^e}\bnu_e^{n_{e,\omega}}(\bx_e^{(\omega)})\, d\bmu^{\ell}\mik C+\eta \]
for any choice of\, $n_{e,\omega}\in\{0,1\}$.
\item[(P2)] For every $e\in\calh$ there exists $\psi_e\in L_p(\bbx,\calb_e, \bmu)$ with $\|\psi_e\|_{L_p}\mik C$ such that
\[ \big|\int\prod_{\omega\in\{0,1\}^{e}} (\bnu_e-\boldsymbol{\psi}_e)(\bx_e^{(\omega)})
\prod_{e'\in\calh\setminus\{e\}} \prod_{\omega\in\{0,1\}^{e'}} \bnu_{e'}^{n_{e'\!,\omega}}(\bx_{e'}^{(\omega)})\, d\bmu^2\big| \mik \eta \]
for any choice of\, $n_{e'\!,\omega}\in\{0,1\}$.
\end{enumerate}
Then $\langle \nu_e:e \in\calh\rangle$ is a $(C,\eta',p)$-pseudorandom family where $\eta'=(C+1)\,\eta^{1/2^r}$.
\end{prop}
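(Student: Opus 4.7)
The plan is to verify the four defining conditions of Definition \ref{d6.1} in turn, each by a reduction to (P1) or (P2); the weakening from $\eta$ to $\eta'=(C+1)\eta^{1/2^r}$ is the cumulative cost of taking $2^r$-th roots after Cauchy--Schwarz reductions and of absorbing stray factors via the marginal integrability of Lemma \ref{l6.3}.

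Condition (C1) follows from the lower bound in (P1) by choosing $n_{e',\omega}=1$ exactly when $e'\in\calg$ and $\omega=0^{e'}$; the coordinates $(x_i^{(j)})$ not appearing in the integrand integrate to $1$, and the surviving integral collapses to $\int\prod_{e'\in\calg}\nu_{e'}\,d\bmu$. Condition (C3) follows from the upper bound in (P1) via a more delicate choice: for each $e'\in\calg$ and each $j\in\{0,\ldots,\ell-1\}$, I would set $n_{e',\omega}=1$ precisely for the $\omega\in\{0,\ldots,\ell-1\}^{e'}$ with $\omega_i=0$ for $i\in e'\cap e$ and $\omega_i=j$ for $i\in e'\setminus e$. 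After integrating out the irrelevant coordinates, this produces exactly $\int\bnu_{e,\calg}^{\ell}\,d\bmu_e$. Condition (C2.a) is immediate from (P2) with all $n_{e',\omega}=0$: the resulting estimate bounds the $2^r$-th power of the Gowers box norm $\|\bnu_e-\bpsi_e\|_{\square_e(\bbx_e)}$ by $\eta$, and a standard $r$-fold Cauchy--Schwarz gives $\|f\|_{\cals_{\partial e}}\mik \|f\|_{\square_e(\bbx_e)}$ for any $f$ on $\bbx_e$, so $\|\nu_e-\psi_e\|_{\cals_{\partial e}}\mik \eta^{1/2^r}\mik \eta'$.

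The main task is condition (C2.b). Writing $I$ for the quantity on the left-hand side of \eqref{e6.3} and expanding the product over $j\in\{0,1\}$ as an iterated integral, one rewrites
\[ I=\int (\bnu_e-\bpsi_e)(\bx_e)\prod_{j\in\{0,1\}}\prod_{e'\in\calh\setminus\{e\}} g_{e'}^{(j)}(\bx_e,\by^{(j)}_{[n]\setminus e})\,d\bmu_e\,d\bmu_{[n]\setminus e}^{2}. \]
I would apply the Gowers--Cauchy--Schwarz inequality $r$ times, once per coordinate $i\in e$, duplicating each such coordinate. After these iterations the exponent on $|I|$ becomes $2^r$, and the factor $\bnu_e-\bpsi_e$ appears at every $\bx_e^{(\omega)}$ with $\omega\in\{0,1\}^e$, matching the template in (P2). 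Bounding each $g_{e'}^{(j)}$ pointwise by either $\bnu_{e'}$ or $1$ produces products of $\bnu_{e'}$'s indexed by $\omega\in\{0,1\}^{e'}$; the surplus multiplicities caused by coordinates $i\in e\setminus e'$ (along which each $g$ is constant) are absorbed using H\"older's inequality together with the marginal bound $\|\bnu_{e,\calg}\|_{L_{\ell}}\mik (C+1)^{1/\ell}$ of Lemma \ref{l6.3}, producing the factor $C+1$. A suitable choice of $n_{e',\omega}\in\{0,1\}$ in (P2) then yields $|I|^{2^r}\mik (C+1)^{2^r}\eta$, whence $|I|\mik (C+1)\eta^{1/2^r}=\eta'$.

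The main obstacle is the careful bookkeeping in (C2.b): the naive Gowers--Cauchy--Schwarz iteration in the $\bx_e$ variable introduces $2^{r+1}$ independent copies of the $\by$-variables (one fresh copy per pair $(j,\omega)$), whereas (P2) is a two-copy statement. Reconciling this mismatch---by grouping the duplicated copies judiciously, absorbing extra powers of $\bnu_{e'}$ via the marginal integrability bounds, and invoking (P2) with the right choice of $n_{e',\omega}$---is the delicate technical core of the argument, and closely parallels the transference calculations of Green--Tao \cite{GT1,GT2} and Conlon--Fox--Zhao \cite{CFZ}.
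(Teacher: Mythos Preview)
Your proposal is correct and follows essentially the same route as the paper: conditions (C1) and (C3) are reduced to (P1) by choosing the exponents $n_{e,\omega}$ exactly as you describe, and (C2.a) follows from (P2) with all $n_{e',\omega}=0$ combined with the box-norm domination of the cut norm (Lemma~\ref{l6.5}). For (C2.b) the paper, like you, defers the details to the literature, packaging the iterated Cauchy--Schwarz argument of \cite[Lemma~6.3]{CFZ} and \cite[Proposition~5.1]{Tao2} as Lemma~\ref{l6.6}.

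One correction to your sketch of (C2.b): the ``obstacle'' you flag is not quite the right difficulty. The variables $\by^{(0)},\by^{(1)}\in\bbx_{[n]\setminus e}$ are \emph{not} duplicated by the Cauchy--Schwarz iteration over the $e$-coordinates---they are carried along as parameters---so there remain only two copies of them throughout, and this matches the $\bmu^2$-structure of (P2) exactly. The real bookkeeping issue is that at each of the $r$ Cauchy--Schwarz steps one splits off a weight factor (a product of $\bnu_{e'}$'s obtained after bounding $g_{e'}^{(j)}\mik\nu_{e'}$ or $g_{e'}^{(j)}\mik 1$), and these weights must be controlled. This is done directly by the upper bound in (P1), not via the marginal estimate of Lemma~\ref{l6.3}; the accumulated weight bounds are what produce the prefactor $C+1$ in $\eta'$.
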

For comparison, it is useful at this point to recall that a family $\langle \nu_e:e\in\calh\rangle$ satisfies the ``linear forms condition"
in the sense of \cite[Definition 2.8]{Tao2} if we have
\begin{equation} \label{e6.10}
\int\prod_{e\in\calh} \prod_{\omega \in\{0,1\}^e}\bnu_e^{n_{e,\omega}}(\bx_e^{(\omega)})\, d\bmu^{2}=1+o(1)
\end{equation}
for any choice of\, $n_{e,\omega}\in\{0,1\}$. (Here, the quantity $o(1)$ is an error term that can be made arbitrarily small.)
Thus, we see that if $C=1$, $p=\infty$ and $\psi_e=1$ for every $e\in\calh$, then properties (P1) and (P2) in
Proposition \ref{p6.4} are equivalent to \eqref{e6.10}.

For the proof of Proposition \ref{p6.4} we need the following lemmas. The first one is a straightforward consequence of the
Gowers--Cauchy--Schwarz inequality (see, e.g., \cite[Lemma B.2]{GT2}).
\begin{lem} \label{l6.5}
Let $e\in\calh$ and $f\in L_1(\bbx,\calb_e,\bmu)$. If\, $\mathbf{f}$ is as in \eqref{e6.1} for $f$, then
\[ \|f\|_{\cals_{\partial e}}\mik\Big(\int \prod_{\omega\in\{0,1\}^e} \mathbf{f}(\bx_e^{(\omega)})\, d\bmu_e^2\Big)^{1/2^r}. \]
\end{lem}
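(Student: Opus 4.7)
The plan is to deduce this from the standard Gowers--Cauchy--Schwarz iteration. Fix an arbitrary $A\in\cals_{\partial e}$ and write $A=\bigcap_{e'\in\partial e}A_{e'}$ with $A_{e'}\in\calb_{e'}$. Since $e'\subseteq e$, each $A_{e'}$ is a $\calb_e$-measurable set that is independent of the coordinate in $e\setminus e'$; concretely, $A_{e'}=\pi_{e'}^{-1}(\bfca_{e'})$ for some $\bfca_{e'}\in\bbs_{e'}$. By Fubini and the identification $f=\mathbf{f}\circ\pi_e$, we obtain
\[
\int_A f\, d\bmu \;=\; \int_{\bbx_e}\mathbf{f}(\bx_e)\prod_{e'\in\partial e}\mathbf{g}_{e'}(\bx_e)\, d\bmu_e(\bx_e),
\]
where $\mathbf{g}_{e'}(\bx_e)\coloneqq \mathbf{1}_{\bfca_{e'}}(\pi_{e,e'}(\bx_e))$ is $\{0,1\}$-valued and does not depend on the coordinate indexed by the unique element of $e\setminus e'$.

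Next, enumerate $e=\{i_1,\dots,i_r\}$ and iterate Cauchy--Schwarz, eliminating one coordinate at a time. At step $k$, the factor $\mathbf{g}_{e\setminus\{i_k\}}$ is the unique one among the remaining $\mathbf{g}_{e'}$'s that is independent of $x_{i_k}$. Separating the integration in $x_{i_k}$, we write the current expression as $\int \mathbf{G}_k\cdot \mathbf{K}_k\, d(\text{rest})$, where $\mathbf{G}_k$ is a product of copies of $\mathbf{g}_{e\setminus\{i_k\}}$ (hence uniformly bounded by $1$) and $\mathbf{K}_k$ is the integral over $x_{i_k}$ of the remaining factors. Applying Cauchy--Schwarz in the $x_{i_k}$-independent variables gives a bound of $\|\mathbf{G}_k\|_{L_2}^2\cdot\|\mathbf{K}_k\|_{L_2}^2\mik \|\mathbf{K}_k\|_{L_2}^2$, and expanding $\|\mathbf{K}_k\|_{L_2}^2$ duplicates the coordinate $x_{i_k}$ into $(x_{i_k}^{(0)},x_{i_k}^{(1)})$, thereby doubling the number of remaining $\mathbf{f}$ and $\mathbf{g}$ factors. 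The key point is that every surviving $\mathbf{g}_{e'}$ factor is bounded by $1$, so it can be discarded at the step where coordinate $i_k\notin e'$ is duplicated, and all duplicated $\mathbf{f}$ factors remain intact.

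After $r$ such applications, each coordinate $x_i$ has been duplicated into $(x_i^{(0)},x_i^{(1)})$, all $\mathbf{g}_{e'}$'s have been discarded, and what is left is exactly $\int\prod_{\omega\in\{0,1\}^e}\mathbf{f}(\bx_e^{(\omega)})\, d\bmu_e^2$. Collecting the squaring powers gives
\[
\Big|\int_A f\, d\bmu\Big|^{2^r} \;\mik\; \int \prod_{\omega\in\{0,1\}^e} \mathbf{f}(\bx_e^{(\omega)})\, d\bmu_e^2,
\]
and taking the $2^r$-th root followed by the supremum over $A\in\cals_{\partial e}$ yields the lemma. There is no real conceptual obstacle; the only thing to be careful with is the bookkeeping of which factors depend on which coordinate at each stage, and verifying that the $\mathbf{g}_{e'}$ factors can indeed be discarded (which is immediate because they take values in $\{0,1\}$).
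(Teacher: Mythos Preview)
Your proposal is correct and is exactly the approach the paper has in mind: the paper does not give a proof at all but simply states that the lemma ``is a straightforward consequence of the Gowers--Cauchy--Schwarz inequality (see, e.g., [GT2, Lemma B.2])''. What you have written is precisely the standard iterated Cauchy--Schwarz derivation of that inequality specialized to the present setting, so your argument and the paper's citation amount to the same thing.
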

The second lemma follows arguing precisely as in the proof of \cite[Lemma 6.3]{CFZ} or \cite[Proposition 5.1]{Tao2}.
We leave the details to the interested reader.
\begin{lem} \label{l6.6}
Let $C\meg 1$, $0<\eta<1$ and $1<p\mik\infty$. For every $e \in \calh$ let $\nu_e \in L_1(\bbx,\calb_e,\bmu)$ be a nonnegative
random variable and assume that the family $\langle \nu_e: e\in \calh\rangle$ satisfies properties \emph{(P1)} and \emph{(P2)}
in Proposition \emph{\ref{p6.4}}. Fix $e\in\calh$, and for every $j\in\{0,1\}$ and every $e'\in\calh\setminus\{e\}$ let
$g_{e'}^{(j)}\in L_1(\bbx,\calb_{e'},\bmu)$ such that either $0\mik g_{e'}^{(j)}\mik\nu_{e'}$ or\, $0\mik g_{e'}^{(j)}\mik 1$. Then we have
\[ \big|\int (\bnu_e-\boldsymbol{\psi}_e)(\bx_e) \prod_{j \in \{0,1\}} \Big(\int \prod_{e'\in\calh\setminus \{e\}}(g_{e'}^{(j)})_{\bx_e}
\, d\bmu_{[n]\setminus e}\Big)\, d\bmu_e\big| \mik (C+1)\, \eta^{1/2^r}. \]
That is, condition \emph{(C2.b)} is satisfied for the parameter $(C+1)\, \eta^{1/2^r}$.
\end{lem}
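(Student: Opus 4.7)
My plan is to prove this by a Cauchy--Schwarz duplication argument, iterated $r$ times (once for each coordinate of $e$), to reduce the estimate to the ``cube-type'' linear forms bound guaranteed by property (P2) of Proposition \ref{p6.4}. This is the standard Green--Tao transference technique, carried out in detail in \cite[Lemma 6.3]{CFZ} and \cite[Proposition 5.1]{Tao2}.

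Setting $\phi := \bnu_e - \bpsi_e$, the quantity to be estimated expands as
\[
Q = \int_{\bbx_e} \phi(\bx_e) \prod_{j\in\{0,1\}} \Big( \int_{\bbx_{[n]\setminus e}} \prod_{e'\in\calh\setminus\{e\}} (g_{e'}^{(j)})_{\bx_e}\, d\bmu_{[n]\setminus e}\Big) d\bmu_e.
\]
Initially, each coordinate $x_i$ with $i\in e$ appears once, while each coordinate $x_i$ with $i\notin e$ appears twice (one copy per $j$). The target form of (P2) involves two copies of \emph{every} coordinate, with $\phi$ appearing at all $2^r$ vertices of the cube $\{0,1\}^e$. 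I would bridge the gap by enumerating $e=\{i_1,\dots,i_r\}$ and applying Cauchy--Schwarz $r$ times, one per coordinate in $e$. At step $k$, the variable $x_{i_k}$ is duplicated (so $|Q|^{2^{k-1}}$ becomes $|Q|^{2^k}$), the number of $\phi$-factors doubles (placing them at the vertices of the sub-cube $\{0,1\}^{\{i_1,\dots,i_k\}}$), and those copies of $g_{e'}^{(j)}$ that are not yet paired with a $\phi$-factor are dominated using the pointwise bounds $0\mik g_{e'}^{(j)}\mik \nu_{e'}$ or $0\mik g_{e'}^{(j)}\mik 1$, replacing them by $\nu_{e'}$ or $1$ respectively.

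After $r$ iterations the resulting estimate takes the form
\[
|Q|^{2^r} \mik \int \prod_{\omega\in\{0,1\}^e} \phi(\bx_e^{(\omega)}) \prod_{e'\in\calh\setminus\{e\}} \prod_{\omega\in\{0,1\}^{e'}} \bnu_{e'}^{n_{e',\omega}}(\bx_{e'}^{(\omega)})\, d\bmu^2
\]
for some choice of exponents $n_{e',\omega}\in\{0,1\}$, multiplied by auxiliary factors arising at intermediate Cauchy--Schwarz stages. Property (P2) bounds the displayed integral by $\eta$, while property (P1) bounds each auxiliary factor by $C+\eta\mik C+1$. Combining these and extracting the $2^r$-th root yields $|Q|\mik (C+1)\eta^{1/2^r}$, as required.

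The main obstacle is the combinatorial bookkeeping during the iteration: at each Cauchy--Schwarz step one must carefully split the integrand into the two factors that get squared, keeping track of which $g_{e'}^{(j)}$'s become paired across duplicated variables (and thus enter the cube structure of (P2)) and which remain unpaired (and are dominated pointwise by $\nu_{e'}$ or $1$). The precise choice of split at each step, together with the pattern of which coordinates of $e'$ intersect the already-duplicated indices in $e$, determines the exponents $n_{e',\omega}$ in the final cube-type integral, and must be done carefully so that the structure demanded by (P2) is actually reached after $r$ steps.
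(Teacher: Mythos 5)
Your proposal is correct and coincides with the paper's intended proof: the paper gives no argument for Lemma \ref{l6.6} but simply directs the reader to argue as in \cite[Lemma 6.3]{CFZ} or \cite[Proposition 5.1]{Tao2}, both of which carry out exactly the iterated (weighted) Cauchy--Schwarz duplication scheme you sketch, with the auxiliary factors controlled by (P1) and the final cube-type integral controlled by (P2). One detail worth flagging is that the final integral contains no leftover $g_{e'}$'s precisely because every $e'\in\calh\setminus\{e\}$ with $|e'|=|e|=r$ omits some $i_{j^*}\in e$, so each $g_{e'}^{(j)}$ lands in the ``auxiliary'' half of the Cauchy--Schwarz at step $j^*\mik r$ and is dominated there by $\nu_{e'}$ or $1$; with that observation, the bookkeeping you describe closes and the bound $(C+\eta)^{1-2^{-r}}\eta^{1/2^r}\mik (C+1)\eta^{1/2^r}$ follows.
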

We are ready to give the proof of Proposition \ref{p6.4}.
\begin{proof}[Proof of Proposition \emph{\ref{p6.4}}]
Let $\calg\subseteq\calh$ be nonempty and observe that
\[ \int\prod_{e\in\calg} \nu_e\,d\bmu=\int\prod_{e\in\calh}\prod_{\omega\in\{0,\dots,\ell-1\}^e}
\bnu_e ^{n_{e,\omega}}(\bx_e^{(\omega)})\,d\bmu^{\ell} \]
where $n_{e,\omega}\in\{0,1\}$ and $n_{e,\omega}=1$ if and only if $e\in\calg$ and $\omega=0^e$. Hence, by property (P1),
condition (C1) is satisfied.

Next, let $e\in\calh$ by arbitrary. By property (P2), we have
\[ \int\prod_{\omega\in\{0,1\}^e}(\bnu_e-\boldsymbol{\psi}_e)(\bx_e^{(\omega)})\,d\bmu_e^2\mik \eta \]
and so, by Lemma \ref{l6.5}, we see that condition (C2.a) is satisfied. Condition (C2.b) follows, of course, from Lemma \ref{l6.6}.

It remains to verify condition (C3). To this end, fix $e\in\calh$ and let $\calg$ be a nonempty subset of $\calh\setminus \{e\}$. Then,
\begin{eqnarray*}
\int\bnu_{e,\calg}^{\ell}\,d\bmu_e & = & \int\Big(\int\prod_{e'\in\calg}\bnu_{e'}(\bx_{e'\cap e},\bx_{e'\setminus e})\,
d\bmu_{[n] \setminus e}\Big)^{\ell}\,d\bmu_{e} \\
& = & \int\Big(\int\prod_{e'\in\calg}\prod_{j=0}^{\ell-1}\bnu_{e'}(\bx_{e' \cap e},\bx_{e'\setminus e}^{(j^{e'\setminus e})})\,
d\bmu_{[n]\setminus e}^{\ell}\Big)\,d\bmu_e \\
& = & \int\prod_{e'\in\calg}\prod_{\omega\in\{0,\dots,\ell-1\}^{e'}}\bnu_{e'}^{n_{e'\!,\omega}}(\bx_{e'}^{(\omega)})\,d\bmu^{\ell}
\end{eqnarray*}
where $n_{e'\!,\omega}\in\{0,1\}$ and $n_{e'\!,\omega}=1$ if and only if $\omega\in\{(0^{e'\cap e},j^{e'\setminus e}):0\mik j\mik \ell-1\}$.
By property (P1), we conclude that condition (C3) is satisfied, and the proof of Proposition \ref{p6.4} is completed.
\end{proof}


\section{Relative counting lemma for pseudorandom families}

\numberwithin{equation}{section}

The following theorem is the main result in this section.
\begin{thm} \label{t7.1}
Let $n,r\in\nn$ with $n\meg r\meg 2$, and let $C\meg 1$ and $1<p\mik\infty$. Also let $\zeta\meg 1$ and $0<\gamma\mik 1$. Then there exist
two strictly positive constants $\eta=\eta(n,r,C,p,\zeta,\gamma)$ and $\alpha=\alpha(n,r,C,p,\zeta,\gamma)$ with the following property.
Let $\mathscr{H}=(n,\langle (X_i,\Sigma_i,\mu_i)\colon i\in [n]\rangle,\calh)$ be an $r$-uniform hypergraph system, and let
$\langle \nu_e:e\in\calh\rangle$ be a $(C,\eta,p)$-pseudorandom family. Moreover, for every $e \in \calh$ let $g_e,h_e\in L_1(\bbx,\calb_e,\bmu)$
such that $0\mik g_e\mik\nu_e$, $0\mik h_e \mik\zeta$ and $\|g_e-h_e\|_{\cals_{\partial e}}\mik\alpha$. Then we have
\begin{equation} \label{e7.1}
\big| \int\prod_{e\in\calh} g_e\, d\bmu - \int \prod_{e\in\calh} h_e\, d\bmu\big| \mik \gamma.
\end{equation}
\end{thm}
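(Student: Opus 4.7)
The plan is a telescoping identity combined with the regularity lemma of Theorem~\ref{t2.1} and the pseudorandomness hypotheses of Definition~\ref{d6.1}. Enumerating $\calh=\{e_1,\dots,e_m\}$, the left-hand side of \eqref{e7.1} equals the telescoping sum
\begin{equation*}
\sum_{k=1}^{m} T_k,\qquad T_k:=\int (g_{e_k}-h_{e_k})\prod_{i<k} g_{e_i}\prod_{i>k} h_{e_i}\,d\bmu,
\end{equation*}
so it suffices to bound each $|T_k|\mik \gamma/m$ by an appropriate choice of parameters.

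Fix $k$ and set $e=e_k$. Observe that by Lemma~\ref{l6.2} each $\nu_{e'}$ is $(C+1,\eta,p)$-regular, and since $g_{e_i}\mik\nu_{e_i}$ the same holds for each $g_{e_i}$; the $h_{e_j}$'s are bounded by $\zeta$. I would then apply Theorem~\ref{t2.1} simultaneously to the family $\{g_{e_i}\}_{i\neq k}$ with parameters $F,\sigma$ to be tuned later, obtaining for each $i\neq k$ a decomposition $g_{e_i}=g_{e_i}^{\mathrm{str}}+g_{e_i}^{\mathrm{err}}+g_{e_i}^{\mathrm{unf}}$ with $g_{e_i}^{\mathrm{str}}$ constant on a partition $\calp_{e_i}\subseteq\cals_{\partial e_i}$ of size at most some $M$, $\|g_{e_i}^{\mathrm{err}}\|_{L_{p^{\dagger}}}\mik\sigma$, and $\|g_{e_i}^{\mathrm{unf}}\|_{\cals_{\partial e_i}}\mik 1/F(M)$. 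Expanding the product along these three pieces splits $T_k$ into a principal term (all structured) plus finitely many error terms in which at least one $g_{e_i}^{\mathrm{err}}$ or $g_{e_i}^{\mathrm{unf}}$ appears.

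For the principal term, since each $g_{e_i}^{\mathrm{str}}$ is $\cala_{\calp_{e_i}}$-measurable and the $h_{e_j}$'s are bounded, after integrating out $\bx_{[n]\setminus e}$ the integrand becomes a combination of at most $M^{m-1}$ terms, each of the form $c_A\int_A (g_e-h_e)\,d\bmu$ for some $A\in\cals_{\partial e}$ (the projection of a structured cell) and a coefficient $|c_A|$ bounded via $\|g_{e_i}^{\mathrm{str}}\|_{L_p}\mik C+1$ and $\zeta^{m-k}$. The hypothesis $\|g_e-h_e\|_{\cals_{\partial e}}\mik\alpha$ then gives $|T_k^{\mathrm{main}}|\mik (\text{const})\,M^{m-1}\alpha$, which is $\mik \gamma/(2m)$ if $\alpha$ is small enough given $M$. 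Each remaining error term contains at least one small factor ($g_{e_i}^{\mathrm{err}}$ in $L_{p^{\dagger}}$, or $g_{e_i}^{\mathrm{unf}}$ in cut norm) alongside possibly unbounded $g_{e_j}\mik\nu_{e_j}$; for these I would use an iterated Cauchy--Schwarz in the style of Lemma~\ref{l6.5}, doubling the variables in $[n]\setminus e$ one at a time and absorbing the remaining factors into the majorants $\nu_{e'}$. Condition~(C2.b) then allows me to replace each $\nu_{e'}$ by its $L_p$-proxy $\psi_{e'}$ up to a vanishing error, and condition~(C3) supplies the H\"older-type moment bounds needed so that the final estimate is bounded by an appropriate power of $\sigma$ or $1/F(M)$.

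The main obstacle is choreographing the cascade of parameter choices: $\alpha$ must be fixed first in terms of $M$ and $\gamma$; then $F$ and $\sigma$ must be chosen small enough (relative to $\alpha^{1/2^r}$) so that the iterated Cauchy--Schwarz bounds on the error terms beat $\gamma/m$; the regularity lemma then forces $M$ via $F$ and $\sigma$; and finally $\eta$ must be taken small enough that the regularity lemma applies to each $\nu_e$ and that the pseudorandomness errors in (C1)--(C3) of Definition~\ref{d6.1} become negligible. This interlocking parameter juggling, together with the need to invoke Theorem~\ref{t2.1} uniformly across the whole family of majorants, is the technical heart of the argument and is presumably what the ``new decomposition method'' of Subsection~7.1 is designed to package.
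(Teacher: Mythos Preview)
Your telescoping reduction is fine, but the treatment of each $T_k$ has a structural gap that the regularity lemma does not repair. Take the simplest case $n=3$, $r=2$, $e_k=\{1,2\}$, and suppose the two remaining factors are already bounded (say both are $h$'s). Then
\[
T_k=\int (g_{\{1,2\}}-h_{\{1,2\}})(x_1,x_2)\,U(x_1,x_2)\,d\mu_1 d\mu_2,\qquad
U(x_1,x_2)=\int h_{\{2,3\}}(x_2,x_3)\,h_{\{1,3\}}(x_1,x_3)\,d\mu_3.
\]
The function $U$ is bounded but is \emph{not} of the form $u_1(x_1)u_2(x_2)$, and the cut norm $\|\cdot\|_{\cals_{\partial e_k}}$ only controls integrals against such products (Fact~\ref{f7.5}). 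Regularising the $g_{e_i}$'s does not help: the cells of $\calp_{e_i}$ lie in $\cals_{\partial e_i}$, and after integrating out the remaining variables their contribution to $U$ is still not an indicator of a set in $\cals_{\partial e_k}$. So the assertion that the principal term splits into expressions $c_A\int_A(g_{e_k}-h_{e_k})\,d\bmu$ with $A\in\cals_{\partial e_k}$ is incorrect, and the same obstruction blocks the proposed treatment of the $g_{e_i}^{\mathrm{unf}}$ error terms. The difficulty is not the parameter juggling you flag at the end; it is that the cut norm is genuinely too weak to control $\int(g_{e_k}-h_{e_k})\,U\,d\bmu$ for a general bounded $U$.

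The paper's proof is organised quite differently and does not invoke Theorem~\ref{t2.1} at all. After reducing to $\zeta=1$ it proceeds by induction on $m=|\{e\in\calh:g_e\neq h_e\}|$. Fixing $e_0$ with $g_{e_0}\neq h_{e_0}$, one defines $G,H\colon\bbx_{e_0}\to\rr$ as the marginals of $\prod_{e\neq e_0}g_e$ and $\prod_{e\neq e_0}h_e$. A Cauchy--Schwarz step together with the local linear forms condition~(C2.b) and H\"older's inequality bounds $|\Delta|$ by $2(C+1)\|G-H\|_{L_{2q}}$ plus $\|g_{e_0}-h_{e_0}\|_{\cals_{\partial e_0}}$ plus a negligible error (Claim~\ref{c7.3}); this is where (C2.b) does the work your iterated Cauchy--Schwarz was meant to do. The heart of the argument is then the bound on $\int(G-H)^{2q}\,d\bmu_{e_0}$ (Claim~\ref{c7.4}), obtained from the pointwise estimate
\[
(G-H)^{2q}\mik (H-G)\,H^{2q-1}\mathbf{1}_{[G<H]}+(G-H)\,G^{2q-1}\mathbf{1}_{[H\mik G\mik\beta]}+G^{2q}\mathbf{1}_{[G>\beta]}.
\]
The first two integrals are of the form $\int\prod_{e\neq e_0}g_e\cdot h'_{e_0}\,d\bmu-\int\prod_{e\neq e_0}h_e\cdot h'_{e_0}\,d\bmu$ with $h'_{e_0}$ \emph{bounded}, hence controlled by the induction hypothesis (one fewer edge with $g_e\neq h_e$); the third is handled by Markov's inequality and condition~(C3). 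This pointwise decomposition is exactly the new method alluded to in Subsection~7.1, and it is what allows the weak cut-norm hypothesis to suffice.
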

The hypotheses of Theorem \ref{t7.1} might appear rather strong: on the one hand the function $g_e$ is dominated by $\nu_e$
(and so, by Lemma \ref{l6.2}, it is $L_p$ regular), but on the other hand it is approximated in the cut norm by a nonnegative
function $h_e$ with $\|h_e\|_{L_{\infty}}\mik \zeta$. It turns out, however, that for \emph{every} $0\mik f_e\mik\nu_e$ we can
indeed satisfy these requirements by slightly truncating $f_e$. More precisely, we have the following proposition.
\begin{prop} \label{p7.2}
Let $n,r,C,p$ and $\mathscr{H}$ be as in Theorem \emph{\ref{t7.1}}, and let $M$ be a positive integer, $0<\alpha\mik 1$ and $e\in\calh$.
Also let $\calp_{\!e}$ be a partition of\, $\bbx$ with $\calp_{\!e}\subseteq \cals_{\partial e}$ and $\bmu(P)\meg 1/M$ for every
$P\in\calp_{\!e}$, and let $\calq_{e}$ be a finite refinement of\, $\calp_{\!e}$ with $\calq_{e}\subseteq \cals_{\partial e}$. Finally,
let $f_e\in L_1(\bbx,\calb_e,\bmu)$ be nonnegative and write $f_e=f^e_{\mathrm{str}}+f^e_{\mathrm{err}}+f^e_{\mathrm{unf}}$ where
$f^e_{\mathrm{str}}$, $f^e_{\mathrm{err}}$ and $f^e_{\mathrm{unf}}$ are as in \eqref{e2.2}.  Assume that the estimates in \eqref{e2.3}
are satisfied for $\sigma=\alpha/2$ and a growth function $F\colon\nn \to \rr$ with $F(m)\meg 2\alpha^{-1}m$ for every $m \in \nn$.
Then the following hold.
\begin{enumerate}
\item[(a)] For every $A\in\cala_{\calp_e}$ we have $\|f_e\cdot\mathbf{1}_A-f^e_{\mathrm{str}}\cdot\mathbf{1}_A\|_{\cals_{\partial e}}\mik\alpha$.
\item[(b)] Assume that $1<p<\infty$. Let $\zeta\meg 1$ and set $A=[f_{\mathrm{str}}^e \mik \zeta]$. Then we have $A\in\cala_{\calp_e}$ and
$\bmu(\bbx\setminus A) \mik (C/\zeta)^p$. Moreover,
\begin{equation} \label{e7.2}
\int_{\bbx\setminus A} f_e\,d\bmu \mik C^p\zeta^{1-p}+\alpha \ \text{ and } \
\int_{\bbx\setminus A} f^e_{\mathrm{str}}\,d\bmu\mik C^p\zeta^{1-p}.
\end{equation}
\end{enumerate}
\end{prop}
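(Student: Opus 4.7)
The plan is to prove each part by directly unpacking the decomposition in \eqref{e2.2} and exploiting the three norm bounds of \eqref{e2.3}. For part (a), I would start by writing $f_e - f^e_{\mathrm{str}} = f^e_{\mathrm{err}} + f^e_{\mathrm{unf}}$ and fix an arbitrary test set $B \in \cals_{\partial e}$; the goal reduces to bounding $|\int_{A\cap B} f^e_{\mathrm{err}}\, d\bmu|$ and $|\int_{A\cap B} f^e_{\mathrm{unf}}\, d\bmu|$ each by $\alpha/2$. The first is handled crudely by $|\int_{A\cap B} f^e_{\mathrm{err}}\, d\bmu| \mik \|f^e_{\mathrm{err}}\|_{L_1} \mik \|f^e_{\mathrm{err}}\|_{L_{p^{\dagger}}} \mik \alpha/2$, using monotonicity of $L_p$ norms and the hypothesis that \eqref{e2.3} holds with $\sigma=\alpha/2$.

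The second piece is the only place where the structure of $A$ and the growth function $F$ enter. Since $A\in\cala_{\calp_{\!e}}$, we may write $A=\bigcup_{P\in\calp^*} P$ for some $\calp^*\subseteq \calp_{\!e}$, and since $\bmu(P)\meg 1/M$ for every $P\in\calp_{\!e}$ we have $|\calp_{\!e}|\mik M$. The crucial (combinatorially trivial but essential) observation is that $\cals_{\partial e}$, although not a $\sigma$-algebra, is closed under finite intersections; in particular every $P\cap B$ lies in $\cals_{\partial e}$, so the cut-norm estimate gives $|\int_{P\cap B} f^e_{\mathrm{unf}}\, d\bmu|\mik 1/F(M)$. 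Summing over at most $M$ cells and invoking $F(M)\meg 2\alpha^{-1}M$ yields $M/F(M)\mik \alpha/2$, completing part (a).

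For part (b), the membership $A\in\cala_{\calp_{\!e}}$ is automatic because $f^e_{\mathrm{str}}=\ave(f_e\,|\,\cala_{\calp_{\!e}})$ is $\cala_{\calp_{\!e}}$-measurable. The measure bound $\bmu(\bbx\setminus A)\mik (C/\zeta)^p$ is just Markov's inequality applied to $(f^e_{\mathrm{str}})^p$ together with $\|f^e_{\mathrm{str}}\|_{L_p}\mik C$. For the integral of $f^e_{\mathrm{str}}$, on $\bbx\setminus A = [f^e_{\mathrm{str}} > \zeta]$ one has $f^e_{\mathrm{str}} \mik (f^e_{\mathrm{str}})^p/\zeta^{p-1}$, so integrating produces $C^p\zeta^{1-p}$.

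Finally, the integral of $f_e$ over $\bbx\setminus A$ follows by a single application of part (a) to the complementary set $\bbx\setminus A\in\cala_{\calp_{\!e}}$ with the test set $B=\bbx$ (which lies in $\cals_{\partial e}$ since every $\calb_{e'}$ contains $\bbx$). This yields $|\int_{\bbx\setminus A}(f_e - f^e_{\mathrm{str}})\,d\bmu|\mik \alpha$, and combining with the bound on $\int_{\bbx\setminus A} f^e_{\mathrm{str}}\,d\bmu$ and the nonnegativity of $f_e$ gives $\int_{\bbx\setminus A} f_e\,d\bmu \mik C^p\zeta^{1-p}+\alpha$. No serious obstacle is anticipated; the only delicate point is the bookkeeping in part (a) that $\cals_{\partial e}$ is merely intersection-closed, which is precisely what legalises the cell-by-cell application of the cut norm.
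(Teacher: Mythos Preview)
Your proposal is correct and follows essentially the same route as the paper's proof: the same decomposition of $A$ into at most $M$ cells of $\calp_{\!e}$, the same use of the intersection-closure of $\cals_{\partial e}$ to apply the cut-norm bound cell by cell, and the same reduction of the $f_e$-integral in part (b) to part (a) applied to $\bbx\setminus A$. The only cosmetic difference is that the paper bounds $\int_{\bbx\setminus A} f^e_{\mathrm{str}}\,d\bmu$ via H\"older's inequality combined with the Markov bound on $\bmu(\bbx\setminus A)$, whereas you use the pointwise inequality $f^e_{\mathrm{str}}\mik (f^e_{\mathrm{str}})^p/\zeta^{p-1}$ on $[f^e_{\mathrm{str}}>\zeta]$; both yield exactly $C^p\zeta^{1-p}$.
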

The rest of this section is devoted to the proof of Theorem \ref{t7.1}. As we have mentioned in the introduction, it is based on a
decomposition method which is discussed in Subsection 7.1. We also note that Proposition \ref{p7.2} is not used in the argument,
though it is needed for the proof of Theorem \ref{t2.2}. As such, we defer its proof to Subsection 8.1.
\medskip

\noindent 7.1. \textbf{Proof of Theorem \ref{t7.1}.} First we need to do some preparatory work. Let $n,r\in\nn$ with $n\meg r\meg 2$,
and let $\mathscr{H}=(n,\langle (X_i,\Sigma_i,\mu_i)\colon i\in [n]\rangle,\calh)$ be an $r$-uniform hypergraph system. Also let
$C\meg 1$ and $1<p\mik\infty$, and denote by $q$ the conjugate exponent of $p$. \textit{These data will be fixed throughout the proof.}

Next, observe that it suffices to prove Theorem \ref{t7.1} only for the case ``$\zeta=1$". Indeed, if the numbers
$\eta(n,r,C,p,1,\gamma)$ and $\alpha(n,r,C,p,1,\gamma)$ have been determined, then it is easy to see that for every $\zeta\meg 1$
Theorem \ref{t7.1} holds true for the parameters $\eta(n,r,C,p,1,\gamma \zeta^{-n^r})$ and
$\zeta \cdot \alpha(n,r,C,p,1,\gamma\zeta^{-n^r})$. Thus, in what follows we will assume that $\zeta=1$.
To avoid trivialities, we will also assume that $|\calh|\meg 2$.

We proceed to introduce some numerical invariants. For every $0<\gamma\mik 1$ we set
\begin{equation} \label{e7.3}
\beta(\gamma)=\big(10(C+1)^2\gamma^{-1}\big)^{2q/x(C,p)} \ \text{ and } \ \theta(\gamma)=\big(20(C+1)\beta(\gamma)\big)^{-2q}\gamma^{2q}
\end{equation}
where $x(C,p)=(4pq)^{-1}$ if $1<p<\infty$, $x(C,\infty)=1/2$ if $C>1$, and $x(1,\infty)=1$. Moreover, for every $m\in\{0,\dots,n^r\}$
and every $0<\gamma\mik 1$ we define $\alpha_m(\gamma)$ and $\eta_m(\gamma)$ in $(0,1]$ recursively by the rule
\begin{equation} \label{e7.4}
\alpha_0(\gamma)=\gamma/5 \ \text{ and } \ \alpha_{m+1}(\gamma)=\alpha_m\big(\theta(\gamma)\big)
\end{equation}
and
\begin{equation} \label{e7.5}
\eta_0(\gamma)=\big(30(C+1)\big)^{-4q}\gamma^{4q}  \ \text{ and } \ \eta_{m+1}(\gamma)=\eta_m\big(\theta(\gamma)\big).
\end{equation}
Notice that $\alpha_{m+1}(\gamma)\mik \alpha_m(\gamma)$ and $\eta_{m+1}(\gamma)\mik \eta_m(\gamma)$ for every $0<\gamma\mik 1$.

After this preliminary discussion we are ready to enter into the main part of the proof which proceeds by induction.
Specifically, let $\langle \nu_e:e\in\calh\rangle$ be a family of nonnegative random variables such that
$\nu_e\in L_1(\bbx,\calb_e,\bmu)$ for every $e\in\calh$. By induction on $m\in\{0,\dots,|\calh|\}$ we will show
that for every $0<\gamma\mik 1$ if the family $\langle \nu_e:e\in\calh\rangle$ is $(C,\eta_m(\gamma),p)$-pseudorandom
where $\eta_m(\gamma)$ is as in \eqref{e7.5}, then the estimate \eqref{e7.1} is satisfied for any collection
$\langle g_e,h_e\in L_1(\bbx,\calb_e,\bmu):e\in\calh\rangle$ with the following properties: (P1) for every $e\in\calh$
we have that either $0\mik g_e \mik \nu_e$ or $g_e=h_e$, (P2) for every $e\in\calh$ we have $0\mik h_e \mik 1$ and
 $\|g_e-h_e\|_{\cals_{\partial e}} \mik \alpha_m(\gamma)$ where $\alpha_m(\gamma)$ is as in \eqref{e7.4},
and (P3) $|\{e\in\calh: g_e\neq h_e\}|\mik m$.

The initial case ``$m=0$" is straightforward, and so let $m\in\{1,\dots,|\calh|\}$ and assume that the induction
has been carried out up to $m-1$. Fix $0<\gamma\mik 1$ and let $\langle g_e,h_e\in L_1(\bbx,\calb_e,\bmu):e\in\calh\rangle$
be a collection satisfying  properties (P1)--(P3). Set
\[ \Delta\coloneqq \int\prod_{e\in\calh} g_e\, d\bmu - \int \prod_{e\in\calh} h_e\, d\bmu \]
and recall that we need to show that $|\Delta|\mik\gamma$. To this end, we may assume that $|\{e\in\calh: g_e\neq h_e\}|=m$
(otherwise, the desired estimate follows immediately from the inductive assumptions). Thus, we may select $e_0\in\calh$
with $g_{e_0}\neq h_{e_0}$; note that, by property (P1), we have $0\mik g_{e_0}\mik \nu_{e_0}$. We set
$\calg=\{e\in\calh\setminus \{e_0\}: g_e\neq h_e\}$ and we define $G,H\colon \bbx_{e_0}\to\rr$ by the rule
\[ G(\bx_{e_0})=\int\!\!\!\prod_{e\in\calh\setminus\{e_0\}}\!\!\!(g_{e})_{\bx_{e_0}}\,d\bmu_{[n]\setminus e_0} \ \text{ and } \
H(\bx_{e_0})=\int\!\!\!\prod_{e\in\calh\setminus\{e_0\}}\!\!\!(h_{e})_{\bx_{e_0}} \, d\bmu_{[n]\setminus e_0}.\]
Observe that $0\mik H\mik 1$. Moreover, if $\calg$ is nonempty, then we have $0\mik G\mik\boldsymbol{\nu}_{e_0,\calg}$
where $\boldsymbol{\nu}_{e_0,\calg}$ is as in Definition \ref{d6.1}. On the other hand, notice that $G=H$ if $\calg=\emptyset$.
In the following claim we obtain a first estimate for $|\Delta|$.
\begin{claim} \label{c7.3}
We have
\begin{equation} \label{e7.6}
|\Delta|\mik 2(C+1)\big(\|G-H\|_{L_{2q}}+\eta_m(\gamma)^{1/2}\big) +\|g_{e_0}-h_{e_0}\|_{\cals_{\partial e_0}}.
\end{equation}
\end{claim}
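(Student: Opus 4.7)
The first step is to apply Fubini: since each $g_e$ and $h_e$ is a function on $\bbx$ depending only on $\bx_e$, integrating out the coordinates $\bx_{[n]\setminus e_0}$ gives $\int\prod_{e\in\calh}g_e\,d\bmu=\int\boldsymbol{g}_{e_0}G\,d\bmu_{e_0}$ and likewise for $h$, so that
\begin{equation*}
\Delta=\int\boldsymbol{g}_{e_0}(G-H)\,d\bmu_{e_0}+\int(\boldsymbol{g}_{e_0}-\boldsymbol{h}_{e_0})H\,d\bmu_{e_0}.
\end{equation*}
The plan is to bound the first summand by the $L_{2q}$-type term (with its factor $2(C+1)$) and the second summand by the cut norm $\|g_{e_0}-h_{e_0}\|_{\cals_{\partial e_0}}$.

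For the second summand I would unfold $H=\int\prod_{e\neq e_0}h_e\,d\bmu_{[n]\setminus e_0}$ and estimate the resulting integral pointwise in $\bx_{[n]\setminus e_0}$. For fixed $\bx_{[n]\setminus e_0}$, the product $\prod_{e\neq e_0}h_e(\bx_e)$ is a function of $\bx_{e_0}$ alone; since $|e\cap e_0|\leq r-1$ for every $e\neq e_0$, I may assign each such $e$ to a face $f(e)\in\partial e_0$ with $e\cap e_0\subseteq f(e)$ and regroup the product as $\prod_{f\in\partial e_0}\Phi_f(\bx_f)$ with $0\leq\Phi_f\leq 1$. A random-threshold representation---writing $\Phi_f(\bx_f)=\ave\mathbf{1}_{\{U_f\leq\Phi_f(\bx_f)\}}$ for independent uniforms $U_f\in[0,1]$---then expresses the product as an average of indicators of sets in $\cals_{\partial e_0}$, which after integrating in $\bx_{[n]\setminus e_0}$ yields the claimed bound $|\int(\boldsymbol{g}_{e_0}-\boldsymbol{h}_{e_0})H\,d\bmu_{e_0}|\leq\|g_{e_0}-h_{e_0}\|_{\cals_{\partial e_0}}$.

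For the first summand I would use $0\leq g_{e_0}\leq\nu_{e_0}$ together with the Cauchy--Schwarz inequality to obtain
\begin{equation*}
\Big|\int\boldsymbol{g}_{e_0}(G-H)\,d\bmu_{e_0}\Big|\leq\Big(\int\boldsymbol{\nu}_{e_0}\,d\bmu_{e_0}\Big)^{1/2}\Big(\int\boldsymbol{\nu}_{e_0}(G-H)^2\,d\bmu_{e_0}\Big)^{1/2}.
\end{equation*}
The decomposition $\nu_{e_0}=\psi_{e_0}+(\nu_{e_0}-\psi_{e_0})$, together with $\|\psi_{e_0}\|_{L_p}\leq C$ and condition (C2.a), bounds the first factor by $(C+\eta_m(\gamma))^{1/2}$. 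In the second factor the $\psi_{e_0}$-part is controlled by H\"older as $\int\boldsymbol{\psi}_{e_0}(G-H)^2\,d\bmu_{e_0}\leq C\|G-H\|_{L_{2q}}^2$, while the $(\nu_{e_0}-\psi_{e_0})$-part is handled by expanding $(G-H)^2=G^2-2GH+H^2$ and applying the local linear forms condition (C2.b) three times, letting $(g^{(0)}_{e'},g^{(1)}_{e'})$ range over $(g_{e'},g_{e'})$, $(g_{e'},h_{e'})$ and $(h_{e'},h_{e'})$ for $e'\neq e_0$; the hypotheses of (C2.b) are met because $0\leq g_{e'}\leq\nu_{e'}$ and $0\leq h_{e'}\leq 1$, and the three applications combine to give a contribution of at most $4\eta_m(\gamma)$. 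Assembling the pieces via $(a+b)^{1/2}\leq a^{1/2}+b^{1/2}$ and $\sqrt{(C+1)C}\leq C+1$ produces the estimate $(C+1)\|G-H\|_{L_{2q}}+2(C+1)\eta_m(\gamma)^{1/2}$, which is (a fortiori) bounded by $2(C+1)(\|G-H\|_{L_{2q}}+\eta_m(\gamma)^{1/2})$.

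The main obstacle, and the step requiring the most care, is the cut-norm estimate for the second summand: carrying out the face-by-face regrouping and combining the random-threshold argument with Fubini in $\bx_{[n]\setminus e_0}$ is what legitimately converts a pointwise $\cals_{\partial e_0}$-bound into the integrated estimate. Once this is in hand, the treatment of the first summand is a clean Cauchy--Schwarz followed by a triple application of the local linear forms condition, and the stated bound on $|\Delta|$ follows by the triangle inequality.
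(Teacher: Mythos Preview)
Your proposal is correct and follows essentially the same route as the paper: the same splitting $\Delta=I_1+I_2$, the same Fubini-plus-layer-cake argument for $I_2$ (your face-regrouping and random-threshold representation is exactly the content of the paper's Fact~\ref{f7.5}), and the same Cauchy--Schwarz/H\"older/(C2.b) treatment of $I_1$, including the expansion of $(G-H)^2$ and the resulting $4\eta_m(\gamma)$ bound. The only cosmetic difference is that you spell out the assignment $e\mapsto f(e)\in\partial e_0$ explicitly, whereas the paper absorbs this into the phrase ``by Fubini's theorem and Fact~\ref{f7.5}''.
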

The proof of Claim \ref{c7.3} is given in Subsection 7.2. It is important to note that it does not
use the inductive assumptions and relies, instead, on the local linear forms condition (condition (C2.b)
in Definition \ref{d6.1}) and H\"{o}lder's inequality. Closely related estimates appear in \cite{CFZ,Tao2}.

The next claim is the second step of the proof.
\begin{claim} \label{c7.4}
If $\beta(\gamma)$ and $\theta(\gamma)$ are as in \emph{\eqref{e7.3}}, then we have
\begin{equation} \label{e7.7}
\int (G-H)^{2q}\, d\bmu_{e_0}\mik 2\beta(\gamma)^{2q}\theta(\gamma)+(C+1)^2\beta(\gamma)^{-x(C,p)}+ 8\eta_m(\gamma)^{1/2}.
\end{equation}
\end{claim}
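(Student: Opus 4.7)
The plan is to split the integral $\int (G-H)^{2q}\,d\bmu_{e_0}$ into the contributions from the low-value region $\{G\mik\beta(\gamma)\}$ and the high-value region $\{G>\beta(\gamma)\}$, handling each by a different device; the degenerate case $\calg=\emptyset$ is immediate since then $G=H$. The specific form of $\beta(\gamma)$ in \eqref{e7.3} ensures $\beta(\gamma)\meg 1$ and hence $H\mik 1\mik\beta$ pointwise. Thus on $\{G\mik\beta\}$ one has $|G-H|\mik\beta$, which yields $(G-H)^{2q}\mik\beta^{2q-1}|G-H|$, reducing the low-value contribution to a bound on $\int_{\{G\mik\beta\}}|G-H|\,d\bmu_{e_0}$. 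On $\{G>\beta\}$ one has $G>H$, hence $(G-H)^{2q}\mik G^{2q}\mik\bnu_{e_0,\calg}^{2q}$, putting us in a position to invoke Lemma \ref{l6.3} applied to $\bfca=\{G>\beta\}$ after estimating $\bmu_{e_0}(\bfca)$.

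For the low-value contribution I will split $\{G\mik\beta\}$ into the $\bbs_{e_0}$-measurable sets $U=\{G\mik\beta,\,G\meg H\}$ and $V=\{G\mik\beta,\,G<H\}$, and bound $|\int_U (G-H)\,d\bmu_{e_0}|$ and $|\int_V (G-H)\,d\bmu_{e_0}|$ separately. By Fubini each equals $\int\mathbf{1}_{\pi_{e_0}^{-1}(U)}\prod_{e\ne e_0}g_e\,d\bmu-\int\mathbf{1}_{\pi_{e_0}^{-1}(U)}\prod_{e\ne e_0}h_e\,d\bmu$, which I rewrite as $\int\prod_{e\in\calh}g^*_e\,d\bmu-\int\prod_{e\in\calh}h^*_e\,d\bmu$ by adjoining $g^*_{e_0}=h^*_{e_0}=\mathbf{1}_{\pi_{e_0}^{-1}(U)}$ and keeping $g_e,h_e$ for $e\ne e_0$. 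This new collection satisfies (P1)--(P3) at level $m-1$ with parameter $\theta(\gamma)$: it has one fewer differing index ($e_0$ is no longer there), the cut-norm requirement at $e_0$ is vacuous, and the other parameters line up exactly thanks to $\alpha_m(\gamma)=\alpha_{m-1}(\theta(\gamma))$ and $\eta_m(\gamma)=\eta_{m-1}(\theta(\gamma))$. The inductive hypothesis thus bounds each piece by $\theta(\gamma)$, giving $\int_{\{G\mik\beta\}}|G-H|\,d\bmu_{e_0}\mik 2\theta(\gamma)$ and hence the low-value contribution is at most $\beta^{2q-1}\cdot 2\theta(\gamma)\mik 2\beta^{2q}\theta(\gamma)$.

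For the high-value contribution I will first apply the same inductive reduction, now with $g^*_{e_0}=h^*_{e_0}=1$, to obtain $|\int G\,d\bmu_{e_0}-\int H\,d\bmu_{e_0}|\mik\theta(\gamma)$; combined with $\int H\,d\bmu_{e_0}\mik 1$ this yields $\int G\,d\bmu_{e_0}\mik 1+\theta(\gamma)\mik 2$, and Markov's inequality produces $\bmu_{e_0}(\{G>\beta\})\mik 2/\beta$. In the case $C>1$ or $1<p<\infty$ (where $x(C,p)$ coincides with the exponent $e(C,p)$ of Lemma \ref{l6.3}(a)), this lemma applied to $\bfca=\{G>\beta\}$ gives
\[
\int_{\{G>\beta\}}G^{2q}\,d\bmu_{e_0}\mik (C+1)\bmu_{e_0}(\{G>\beta\})^{x(C,p)}\mik 2(C+1)\beta^{-x(C,p)}\mik (C+1)^2\beta^{-x(C,p)},
\]
using $C\meg 1$. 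In the residual case $C=1$, $p=\infty$ (so $2q=2$ and $x(1,\infty)=1$), Lemma \ref{l6.3}(b) gives $\int_{\{G>\beta\}}G^2\,d\bmu_{e_0}\mik 2\bmu_{e_0}(\{G>\beta\})+8\eta_m(\gamma)^{1/2}\mik 4\beta^{-1}+8\eta_m(\gamma)^{1/2}=(C+1)^2\beta^{-x(C,p)}+8\eta_m(\gamma)^{1/2}$. Summing the two contributions yields the stated estimate.

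The principal technical subtlety lies not in any individual inequality but in the bookkeeping: verifying in each application that the auxiliary collections $\langle g^*_e,h^*_e\rangle$ genuinely satisfy (P1)--(P3) at level $m-1$ with parameter $\theta(\gamma)$, which is precisely what the recursive definitions of $\alpha_m(\gamma)$ and $\eta_m(\gamma)$ in \eqref{e7.4}--\eqref{e7.5} were engineered to guarantee.
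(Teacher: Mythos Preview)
Your proof is correct and follows essentially the same strategy as the paper's: split according to the size of $G$, handle the ``small $G$'' part by one or two applications of the inductive hypothesis (replacing the $e_0$-factor by a $[0,1]$-valued function), and handle the ``large $G$'' part via Lemma~\ref{l6.3} together with a Markov bound on $\bmu_{e_0}([G>\beta])$.

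There are two minor variations worth noting. First, on the low region the paper uses the sharper pointwise bounds $(H-G)^{2q}\mik (H-G)H^{2q-1}$ on $[G<H]$ and $(G-H)^{2q}\mik (G-H)G^{2q-1}$ on $[H\mik G\mik\beta]$, invoking the inductive hypothesis with the weighted multipliers $H^{2q-1}\mathbf{1}_{[G<H]}$ and $(G/\beta)^{2q-1}\mathbf{1}_{[H\mik G\mik\beta]}$; you instead use the cruder bound $(G-H)^{2q}\mik\beta^{2q-1}|G-H|$ and plain indicators $\mathbf{1}_U,\mathbf{1}_V$, which is slightly simpler but costs an extra factor of~$\beta$ (absorbed anyway into $2\beta^{2q}\theta(\gamma)$). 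Second, to bound $\bmu_{e_0}([G>\beta])$ the paper appeals directly to condition~(C3) via $\|\bnu_{e_0,\calg}\|_{L_\ell}\mik C+1$, whereas you obtain $\int G\,d\bmu_{e_0}\mik 2$ by a further call to the inductive hypothesis with $g^*_{e_0}=h^*_{e_0}=1$. Both routes yield the same final estimate.
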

Granting Claims \ref{c7.3} and \ref{c7.4}, the proof of the inductive step (and, consequently, of Theorem \ref{t7.1}) is completed
as follows. First observe that, by \eqref{e7.5}, we have $\eta_m(\gamma)\mik \big(30(C+1)\big)^{-4q}\gamma^{4q}$; in particular,
$8\eta_m(\gamma)^{1/2}\mik \big(10(C+1)\big)^{-2q}\gamma^{2q}$. On the other hand, by Claim \ref{c7.4} and the choice of $\beta(\gamma)$
and $\theta(\gamma)$ in \eqref{e7.3}, it is easy to see that $\|G-H\|_{L_{2q}}\mik 3\big(10(C+1)\big)^{-1}\gamma$. Therefore, by Claim \ref{c7.3}
and property (P2),
\begin{eqnarray*}
|\Delta| & \mik  & 2(C+1)\big(\|G-H\|_{L_{2q}} + \eta_m(\gamma)^{1/2}\big) +\|g_{e_0} - h_{e_0}\|_{\cals_{\partial e_0}}\\
& \mik & 4\gamma/5 + \alpha_{m}(\gamma) \mik 4\gamma/5 + \alpha_{0}(\gamma) \mik 4\gamma/5 + \gamma/5=\gamma.
\end{eqnarray*}

We close this subsection by commenting on the proof of Claim \ref{c7.4} (the details are given in Subsection 7.3).
Estimates of this form are usually obtained for stronger norms than the cut norm, and as such, they depend on stronger pseudorandomness
conditions. The first general method available in this context was developed recently in \cite{CFZ}. It is known as ``densification" and
consists of taking successive marginals in order to arrive at an expression which involves only bounded functions (see also \cite{Sh}).
Another approach (which ultimately relies on H\"{o}lder's inequality) was introduced by Tao and Ziegler \cite{TZ2,TZ3}.

We propose a new method to deal with these types of problems. It appears to be quite versatile---see, e.g., \cite{DK1}---and is based
on a simple decomposition scheme. The method is best seen in action: we first observe the pointwise bound
\[ (G-H)^{2q}\mik (G-H)^{2q}\,\mathbf{1}_{[G\meg H]} + (H-G)\, H^{2q-1}\mathbf{1}_{[G<H]}. \]
Since $0\mik H^{2q-1}\mathbf{1}_{[G<H]}\mik 1$ the expectation of the second term of the above decomposition can be estimated
using our inductive hypotheses. For the first term we select a cut-off parameter $\beta \meg 1$ and we decompose further as
\[ (G-H)^{2q}\,\mathbf{1}_{[G\meg H]} \mik G^{2q}\,\mathbf{1}_{[G\meg H]}\mathbf{1}_{[G>\beta]}+
(G-H)\, G^{2q-1}\mathbf{1}_{[G\meg H]}\mathbf{1}_{[G\mik \beta]}. \]
If $\beta$ is large enough, then we can effectively bound the expectation of the first term of the new decomposition
using Lemma \ref{l6.3} and Markov's inequality. On the other hand, we have
$0\mik G^{2q-1}\mathbf{1}_{[G\meg H]}\mathbf{1}_{[G\mik \beta]}\mik\beta^{2q-1}$ and so the second term can also
be handled by our inductive assumptions. By optimizing the parameter $\beta$, we obtain the estimate in \eqref{e7.7}
thus completing the proof of Claim \ref{c7.4}.
\medskip

\noindent 7.2. \textbf{Proof of Claim \ref{c7.3}.} Let $\mathbf{g}_{e_0}$ be as in \eqref{e6.1} for $g_{e_0}$. Set
\[ I_1=\int \mathbf{g}_{e_0}(G-H)\,d\bmu_{e_0} \ \text{ and } \
I_2=\int (g_{e_0} -h_{e_0})\!\!\prod_{e\in\calh\setminus\{e_0\}}\!\! h_{e}\, d\bmu \]
and notice that $|\Delta|\mik |I_1|+|I_2|$. Next, observe that
\begin{equation} \label{e7.8}
|I_2|\mik \|g_{e_0}-h_{e_0}\|_{\cals_{\partial e_0}}.
\end{equation}
This follows by Fubini's theorem and the following well-known fact (see, e.g., \cite{Go1}).
We recall the proof for the convenience of the reader.
\begin{fact} \label{f7.5}
Let $e\in\calh$ with $|e|\meg 2$ and $g_e\in L_1(\bbx,\calb_e,\bmu)$. For every $f\in\partial e$ let $u_f\in L_\infty(\bbx,\calb_f,\bmu)$
with $0\mik u_f\mik 1$. Then we have $|\int g_e\prod_{f\in \partial e}u_f\,d\bmu|\mik\|g_e\|_{\cals_{\partial e}}$.
\end{fact}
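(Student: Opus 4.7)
The plan is to reduce the statement to the definition of $\|g_e\|_{\cals_{\partial e}}$ by approximating the bounded functions $u_f$ by indicator functions of level sets, which is the standard layer-cake trick. Since each $u_f\in L_\infty(\bbx,\calb_f,\bmu)$ takes values in $[0,1]$, I would first write the pointwise identity
\begin{equation*}
u_f(\bx) = \int_0^1 \mathbf{1}_{[u_f > t_f]}(\bx)\, dt_f,
\end{equation*}
valid for every $\bx\in\bbx$. Multiplying these over $f\in\partial e$ (which is a finite product) and applying Fubini--Tonelli to the non-negative integrand, one obtains
\begin{equation*}
\prod_{f\in\partial e} u_f(\bx) = \int_{[0,1]^{\partial e}} \prod_{f\in\partial e}\mathbf{1}_{[u_f > t_f]}(\bx)\, dt = \int_{[0,1]^{\partial e}} \mathbf{1}_{A(\mathbf{t})}(\bx)\, dt,
\end{equation*}
where $A(\mathbf{t})\coloneqq \bigcap_{f\in\partial e} [u_f > t_f]$.

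The key observation is that for each parameter $\mathbf{t}=(t_f)_{f\in\partial e}\in [0,1]^{\partial e}$ the set $A(\mathbf{t})$ lies in $\cals_{\partial e}$, because $[u_f > t_f]\in\calb_f$ for every $f\in\partial e$ (by $\calb_f$-measurability of $u_f$) and $\cals_{\partial e}$ is defined precisely as the family of intersections $\bigcap_{f\in\partial e} A_f$ with $A_f\in\calb_f$. Consequently, from the definition \eqref{e2.1} of the cut norm,
\begin{equation*}
\big|\int g_e\, \mathbf{1}_{A(\mathbf{t})}\, d\bmu\big| = \big|\int_{A(\mathbf{t})} g_e\, d\bmu\big| \leq \|g_e\|_{\cals_{\partial e}}
\end{equation*}
for every $\mathbf{t}\in [0,1]^{\partial e}$.

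Finally, I would apply Fubini once more (noting $g_e\in L_1$, so integrability is not an issue when bounding absolute values) to swap the integrals:
\begin{equation*}
\int g_e \prod_{f\in\partial e} u_f\, d\bmu = \int_{[0,1]^{\partial e}} \Big( \int g_e\, \mathbf{1}_{A(\mathbf{t})}\, d\bmu\Big)\, dt.
\end{equation*}
Taking absolute values, pulling them inside via the triangle inequality, and using the uniform bound from the previous step yields $\big|\int g_e \prod_{f\in\partial e} u_f\, d\bmu\big|\leq \|g_e\|_{\cals_{\partial e}}\cdot 1$, as desired. There is essentially no obstacle here; the only minor care needed is to confirm that $A(\mathbf{t})$ really belongs to $\cals_{\partial e}$ as defined in \eqref{e1.3} (which it does by construction), and that Fubini applies to the signed integral against $g_e$, for which the dominating integrand $|g_e|\cdot \prod_f u_f\leq |g_e|\in L_1$ provides the required bound.
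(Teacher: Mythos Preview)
Your proof is correct and is essentially identical to the paper's own argument: both use the layer-cake representation $u_f=\int_0^1 \mathbf{1}_{[u_f>t_f]}\,dt_f$, observe that $\bigcap_{f\in\partial e}[u_f>t_f]\in\cals_{\partial e}$, and then apply Fubini to bound the resulting integral by $\|g_e\|_{\cals_{\partial e}}$. The paper's proof is slightly more terse but follows exactly the same route.
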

\begin{proof} Set $k=|e|$ and let $\{f_1,\dots,f_k\}$ be an enumeration of $\partial e$. We define
$Z\colon [0,1]^k\to \mathbb{R}$ by the rule $Z(t_1,\dots,t_k)=\int g_e\prod_{i=1}^k \boldsymbol{1}_{[u_{f_i}>t_i]}\,d\bmu$.
Notice that $\bigcap_{i=1}^k [u_{f_i}>t_i]\in\cals_{\partial e}$ for every $(t_1,\dots,t_k)\in [0,1]^k$ and so
$\|Z\|_{L_\infty}\mik \|g_e\|_{\cals_{\partial e}}$. On the other hand, denoting by $\boldsymbol{\lambda}$ the
Lebesgue measure on $[0,1]^k$, by Fubini's theorem we have $\int g_e\prod_{f\in \partial e}u_f\,d\bmu=
\int Z\,d\boldsymbol{\lambda}$ and the result follows.
\end{proof}
We proceed to estimate $|I_1|$. First, by the Cauchy--Schwarz inequality and the fact that $0\mik g_{e_0}\mik \nu_{e_0}$, we obtain
\[ |I_1|^2\mik \int \mathbf{g}_{e_0}\, d\bmu_{e_0} \cdot \int \mathbf{g}_{e_0}(G-H)^2\,d\bmu_{e_0} \mik
\int\bnu_{e_0}\,d\bmu_{e_0} \cdot \int \bnu_{e_0} (G-H)^2\, d\bmu_{e_0}.\]
Let $\psi_{e_0}\in L_p(\bbx,\calb_{e_0},\bmu)$ with $\|\psi_{e_0}\|_{L_p}\mik C$ be as in Definition \ref{d6.1} and notice
that by condition (C2.a) we have $|\int (\nu_{e_0}-\psi_{e_0})\, d\bmu| \mik\eta_m(\gamma)$. This is easily seen to imply that
$\int\nu_{e_0}\,d\bmu\mik C+1$ and so, by the previous estimate, we have
\[ |I_1|^2 \mik (C+1)\cdot \Big( \big| \int \boldsymbol{\psi}_{e_0}(G-H)^2\,d\bmu_{e_0} \big| +
\big| \int (\bnu_{e_0}-\boldsymbol{\psi}_{e_0}) (G-H)^2\,d\bmu_{e_0} \big| \Big) \]
where $\boldsymbol{\psi}_{e_0}$ is as in \eqref{e6.1} for $\psi_{e_0}$. Next, writing $(G-H)^2=G^2-2GH+H^2$
and applying \eqref{e6.3}, we see that $|\int (\bnu_{e_0}-\boldsymbol{\psi}_{e_0}) (G-H)^2\,d\bmu_{e_0}|\mik 4\eta_m(\gamma)$.
On the other hand, by H\"{o}lder's inequality, $|\int\boldsymbol{\psi}_{e_0}(G-H)^2\,d\bmu_{e_0}|\mik C\|G-H\|_{L_{2q}}^2$.
Therefore,
\begin{equation} \label{e7.9}
|I_1|\mik 2(C+1)\big(\|G-H\|_{L_{2q}} +\eta_m(\gamma)^{1/2}\big).
\end{equation}
Combining \eqref{e7.8} and \eqref{e7.9} we conclude that the estimate in \eqref{e7.6} is satisfied, as desired.
\medskip

\noindent 7.3. \textbf{Proof of Claim \ref{c7.4}.} Recall that $\calg$ stands for the set $\{e\in\calh\setminus\{e_0\}:g_e\neq h_e\}$.
We may assume, of course, that $\calg$ is nonempty and, consequently, that $G\neq H$. Set $\mathbf{A}=[G<H]$,
$\mathbf{B}=[G\meg H]\cap [G\mik \beta(\gamma)]$ and $\mathbf{C}=[G\meg H]\cap [G>\beta(\gamma)]$, and notice that
$\mathbf{A},\mathbf{B},\mathbf{C}\in\bbs_{e_0}$. Next, define
\[ I_1=\int (H-G)H^{2q-1}\mathbf{1}_{\mathbf{A}}\,d\bmu_{e_0}, \ I_2=\int (G-H)\,G^{2q-1}\mathbf{1}_{\mathbf{B}}\,d\bmu_{e_0}, \
I_3=\int_{\mathbf{C}} G^{2q}\,d\bmu_{e_0} \]
and observe that $I_1,I_2,I_3\meg 0$ and $\int (G-H)^{2q}\, d\bmu_{e_0}\mik I_1+I_2+I_3$. Thus, it suffices to estimate $I_1$, $I_2$ and $I_3$.

First we argue for $I_1$. Let $h'_{e_0}=(H^{2q-1}\boldsymbol{1}_{\mathbf{A}})\circ \pi_{e_0}\in L_1(\bbx,\calb_{e_0},\bmu)$ and notice that
$0\mik h_{e_0}'\mik 1$. By the definition of $G$ and $H$, we see that
\[ I_1=\big|\int\prod_{e\in\calh\setminus\{e_0\}}g_e\cdot h'_{e_0}\,d\bmu -\int\prod_{e\in\calh\setminus\{e_0\}}h_e\cdot h'_{e_0}\,d\bmu\big|. \]
Moreover, by \eqref{e7.4} and property (P2), we have $\|g_e-h_e\|_{\cals_{\partial e}}\mik \alpha_{m-1}\big(\theta(\gamma)\big)$ for
every $e\in\calh\setminus\{e_0\}$. Since $\eta_m(\gamma)=\eta_{m-1}\big(\theta(\gamma)\big)$, by our inductive assumptions, we obtain
\begin{equation} \label{e7.10}
I_1\mik \theta(\gamma).
\end{equation}
The estimation of $I_2$ is similar. Indeed, observe that
\[ I_2 =\beta(\gamma)^{2q-1} \int (G-H)\big(G/\beta(\gamma)\big)^{2q-1}\mathbf{1}_{\mathbf{B}}\, d\bmu_{e_0} \]
and $0\mik \big(G/\beta(\gamma)\big)^{2q-1}\mathbf{1}_{\mathbf{B}}\mik 1$. Therefore,
\begin{equation} \label{e7.11}
I_2\mik \beta(\gamma)^{2q-1}\theta(\gamma).
\end{equation}
We proceed to estimate $I_3$. Let $\bnu_{e_0,\calg}$ and $\ell$ be as in Definition \ref{d6.1}, and recall that $0\mik G\mik \bnu_{e_0,\calg}$.
By Markov's inequality and the monotonicity of the $L_p$ norms,
\[ \bmu_{e_0}(\mathbf{C})\mik \bmu_{e_0}\big( [\bnu_{e_0,\calg}\meg\beta(\gamma)]\big) \mik
\frac{\int \bnu_{e_0,\calg}\, d\bmu_{e_0}}{\beta(\gamma)} \mik
\frac{\|\bnu_{e_0,\calg}\|_{L_\ell}}{\beta(\gamma)}\stackrel{\eqref{e6.5}}{\mik} \frac{C+1}{\beta(\gamma)}.\]
Thus, by Lemma \ref{l6.3} and the choice of $x(C,p)$, we have
\begin{eqnarray} \label{e7.12}
I_3 & \mik & \int_{\mathbf{C}} \bnu^{2q}_{e_0,\calg}\,d\bmu_{e_0} \mik (C+1)\, \bmu_{e_0}\!(\mathbf{C})^{x(C,p)} + 8\eta_m(\gamma)^{1/2} \\
& \mik & (C+1)^2\beta(\gamma)^{-x(C,p)}+ 8\eta_m(\gamma)^{1/2}. \nonumber
\end{eqnarray}
Combining \eqref{e7.10}--\eqref{e7.12} we conclude that the estimate in \eqref{e7.7} is satisfied. The proof of Claim \ref{c7.4} is completed.


\section{Relative removal lemma for pseudorandom families}

\numberwithin{equation}{section}

This section is devoted to the proof of Theorem \ref{t2.2} and its consequences. As we have mentioned, in Subsection 8.1
we give the proof of Proposition \ref{p7.2}. The proof of Theorem \ref{t2.2} is then completed in Subsection 8.2.
Finally, in Subsection 8.3 we present arithmetic versions of Theorem \ref{t2.2}.
\medskip

\noindent 8.1. \textbf{Proof of Proposition \ref{p7.2}.} For part (a), fix $A\in\cala_{\calp_{\!e}}$ and let
$\calp'\subseteq\calp_{\!e}$ such that $A=\bigcup \calp'$. Notice that $|\calp'|\mik |\calp_{\!e}|\mik M$ and
\[ f_e\cdot\mathbf{1}_A - f_{\mathrm{str}}^e\cdot\mathbf{1}_A= f_{\mathrm{err}}^e\cdot\mathbf{1}_A +
\sum_{P\in\calp'} f_{\mathrm{unf}}^e\cdot\mathbf{1}_P. \]
Therefore, for any $B\in\cals_{\partial_e}$ we have
\begin{eqnarray*}
\big|\int_B (f_e\cdot\mathbf{1}_A-f_{\mathrm{str}}^e\cdot\mathbf{1}_A)\,d\bmu\big| & \mik &
\big|\int_{B\cap A} f_{\mathrm{err}}^e\,d\bmu\big|  + \sum_{P\in\calp'}\big|\int_{B\cap P} f_{\mathrm{unf}}^e\,d\bmu\big| \\
& \mik & \|f^e_{\mathrm{err}}\|_{L_{p^{\dagger}}}+ |\calp'|\cdot\|f^e_{\mathrm{unf}}\|_{\cals_{\partial e}}\mik\sigma+\frac{M}{F(M)}\mik\alpha
\end{eqnarray*}
which implies, of course, that $\|f_e\cdot\mathbf{1}_A-f_{\mathrm{str}}^e\cdot\mathbf{1}_A\|_{\cals_{\partial e}}\mik \alpha$.

For part (b), let $\zeta\meg 1$ be arbitrary and set $A=[f_{\mathrm{str}}^e \mik \zeta]$. First observe that $A\in\cala_{\calp_{\!e}}$
since $f^e_{\mathrm{str}}=\ave(f_e\,|\,\cala_{\calp_{\!e}})$. Next, by Markov's inequality, we have
\[ \bmu(\bbx\setminus A) \mik \frac{\int (f_{\mathrm{str}}^e)^p\, d\bmu}{\zeta^p} \mik C^p\zeta^{-p} \]
and so, by H\"{o}lder's inequality,
\[ \int_{\bbx\setminus A} f^e_{\mathrm{str}}\,d\bmu \mik \|f^e_{\mathrm{str}}\|_{L_p}\cdot \bmu(\bbx\setminus A)^{1/q}\mik C^p\zeta^{1-p}.\]
Finally, by part (a) and the fact that $\bbx\setminus A\in\cala_{\calp_{\!e}}$, we conclude that
\begin{eqnarray*}
\int_{\bbx\setminus A} f_e\,d\bmu & \mik & \int_{\bbx\setminus A} f^e_{\mathrm{str}}\,d\bmu +
\big|\int_{\bbx\setminus A} (f_e-f_{\mathrm{str}}^e)\,d\bmu\big| \\
& \mik & C^p\zeta^{1-p}+ \|f_e\cdot \mathbf{1}_{\bbx\setminus A}-f_{\mathrm{str}}^e\cdot\mathbf{1}_{\bbx\setminus A}\|_{\cals_{\partial e}}
\mik C^p\zeta^{1-p} +\alpha
\end{eqnarray*}
and the proof of Proposition \ref{p7.2} is completed.
\medskip

\noindent 8.2. \textbf{Proof of Theorem \ref{t2.2}.} Let $n,r,C,p$ and $\ee$ be as in the statement of the theorem and let $q$ denote
the conjugate exponent of $p$. We begin by introducing some numerical invariants. First, we set
\[ \zeta=\zeta(C,p,\ee)=(C+1)^q (\varepsilon/6)^{1-q}. \]
Also let $\Delta(n,r,\frac{\ee}{6\zeta})$ and $K(n,r,\frac{\ee}{6\zeta})$ be as in Theorem \ref{t3.4} and note that we may assume that
$\Delta(n,r,\frac{\ee}{6\zeta})\mik \frac{\ee}{6\zeta}$. We define
\begin{equation} \label{e8.1}
\delta=\delta(n,r,C,p,\ee)=\frac{\Delta(n,r,\frac{\ee}{6\zeta})^{n^r}\!\!\!}{2} \ \text{ and } \
k=k(n,r,C,p,\ee)=K\Big(n,r,\frac{\ee}{6\zeta}\Big).
\end{equation}
Next, let $\alpha(n,r,C,p,\zeta,\delta)$ and $\eta(n,r,C,p,\zeta,\delta)$ be as in Theorem \ref{t7.1} and set
\[ \alpha=\min\{ k^{-2^r}\!(\ee/3),\alpha(n,r,C,p,\zeta,\delta)\} \ \text{ and } \ \mathrm{Reg}=\mathrm{Reg}(n,r,C+1,p,F,\alpha/2) \]
where $F\colon\nn\to \rr$ is the growth function defined by the rule $F(m)=2\alpha^{-1}(m+1)$ and
$\mathrm{Reg}(n,r,C+1,p,F,\alpha/2)$ is as in Theorem \ref{t2.1}. Finally, we define
\begin{equation} \label{e8.2}
\eta=\eta(n,r,C,p,\ee)=\min\{1/\mathrm{Reg},\eta(n,r,C,p,\zeta,\delta)\}.
\end{equation}
We will show that the parameters $\eta$, $\delta$ and $k$ are as desired.

Indeed, let $\mathscr{H}=(n,\langle(X_i,\Sigma_i,\mu_i):i\in [n]\rangle,\calh)$ be an $\eta$-nonatomic, $r$-uniform
hypergraph system and let $\langle \nu_e:e\in\calh\rangle$ be a $(C,\eta,p)$-pseudorandom family. For every $e\in \calh$ let
$f_e\in L_1(\bbx,\calb_e,\bmu)$ with $0\mik f_e\mik \nu_e$ and assume that
\begin{equation} \label{e8.3}
\int \prod_{e\in\calh} f_e\, d\bmu \mik \delta.
\end{equation}
By Lemma \ref{l6.2}, for every $e\in\calh$ the random variable $\nu_e$ is $(C+1,\eta,p)$-regular and, consequently, so is $f_e$.
Therefore, by \eqref{e8.2}, we may apply Theorem \ref{t2.1} and we obtain: (a) a positive integer $M$ with $M\mik \mathrm{Reg}$,
(b) for every $e\in\calh$ a partition $\calp_{\!e}$ of $\bbx$ with $\calp_{\!e}\subseteq \cals_{\partial e}$ and $\bmu(A)\meg 1/M$
for every $A\in\calp_{\!e}$, and (c) for every $e\in\calh$ a finite refinement $\calq_e$ of\, $\calp_{\!e}$, such that for every
$e\in\calh$, writing $f_e=f^e_{\mathrm{str}}+ f^e_{\mathrm{err}}+f^e_{\mathrm{unf}}$ where $f^e_{\mathrm{str}}$, $f^e_{\mathrm{err}}$
and $f^e_{\mathrm{unf}}$ are as in \eqref{e2.2}, we have the estimates
\begin{equation} \label{e8.4}
\|f^e_{\mathrm{str}}\|_{L_p} \mik C+1, \ \ \|f^e_{\mathrm{err}}\|_{L_{p^{\dagger}}}\mik \alpha/2 \ \text{ and } \
\|f^e_{\mathrm{unf}}\|_{\cals_{\partial e}}\mik \frac{1}{F(M)}
\end{equation}
where $p^{\dagger}=\min\{2,p\}$. For every $e\in\calh$\, let
\begin{equation} \label{e8.5}
A_e=[f_{\mathrm{str}}^e\mik \zeta], \ \ g_e= f_e\cdot\mathbf{1}_{A_e} \ \text{ and } \ h_e=f_{\mathrm{str}}^e\cdot\mathbf{1}_{A_e}
\end{equation}
and notice that $0\mik g_e\mik \nu_e$ and $0\mik h_e\mik\zeta$. Moreover, by Proposition \ref{p7.2}, we see that
$\|g_e-h_e\|_{\cals_{\partial e}}\mik \alpha$.
\begin{claim} \label{c8.1}
We have $\int \prod_{e\in\calh} h_e\, d\bmu \mik \Delta(n,r,\frac{\ee}{6\zeta})^{n^r}$.
\end{claim}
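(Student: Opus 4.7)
The plan is to compare $\int\prod_e h_e\,d\bmu$ with $\int\prod_e g_e\,d\bmu$ via the relative counting lemma (Theorem \ref{t7.1}), and then to compare $\int\prod_e g_e\,d\bmu$ with $\int\prod_e f_e\,d\bmu$ using a trivial pointwise bound.

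First, since $0\mik\mathbf{1}_{A_e}\mik 1$ we have $0\mik g_e=f_e\cdot\mathbf{1}_{A_e}\mik f_e$ pointwise for every $e\in\calh$, so
\[ \int\prod_{e\in\calh}g_e\,d\bmu \mik \int\prod_{e\in\calh}f_e\,d\bmu \stackrel{\eqref{e8.3}}{\mik}\delta. \]
Next, I would like to apply Theorem \ref{t7.1} to the families $\langle g_e:e\in\calh\rangle$ and $\langle h_e:e\in\calh\rangle$ with parameters $\zeta$ as defined and $\gamma=\delta$. The hypotheses of Theorem \ref{t7.1} are exactly what has been arranged: the family $\langle\nu_e:e\in\calh\rangle$ is $(C,\eta,p)$-pseudorandom with $\eta\mik\eta(n,r,C,p,\zeta,\delta)$ by \eqref{e8.2}; we have $0\mik g_e\mik\nu_e$ (since $g_e\mik f_e\mik\nu_e$); we have $0\mik h_e\mik\zeta$ by the definition of $A_e$; and, by Proposition \ref{p7.2}(a) applied with the growth function $F$ and $\sigma=\alpha/2$ used in the application of Theorem \ref{t2.1}, we have $\|g_e-h_e\|_{\cals_{\partial e}}\mik\alpha\mik\alpha(n,r,C,p,\zeta,\delta)$ for every $e\in\calh$.

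Consequently Theorem \ref{t7.1} yields
\[ \big|\int\prod_{e\in\calh}g_e\,d\bmu - \int\prod_{e\in\calh}h_e\,d\bmu\big|\mik \delta, \]
and combining with the previous estimate gives
\[ \int\prod_{e\in\calh}h_e\,d\bmu \mik 2\delta \stackrel{\eqref{e8.1}}{=} \Delta\Big(n,r,\tfrac{\ee}{6\zeta}\Big)^{\!n^r} \]
as claimed. The only subtle point is checking that $g_e$ is a legitimate input to Theorem \ref{t7.1}, i.e.\ that the truncation by $\mathbf{1}_{A_e}$ preserves the bound $g_e\mik\nu_e$ and that Proposition \ref{p7.2}(a) applies with the approximation parameter $\alpha$; both follow directly from our choice of $\alpha$, $F$ and $\sigma$ in the setup preceding the claim, so there is no genuine obstacle.
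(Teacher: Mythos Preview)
Your proof is correct and follows essentially the same route as the paper's: apply Theorem~\ref{t7.1} (with $\gamma=\delta$) to get $\big|\int\prod_e g_e-\int\prod_e h_e\big|\mik\delta$, combine this with the pointwise bound $g_e\mik f_e$ and the hypothesis \eqref{e8.3}, and conclude $\int\prod_e h_e\mik 2\delta=\Delta(n,r,\frac{\ee}{6\zeta})^{n^r}$. The extra verification you include (that the hypotheses of Theorem~\ref{t7.1} hold) is already established in the text immediately preceding the claim, so your proof is slightly more self-contained but otherwise identical.
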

\begin{proof}
First observe that, by the choice of $\alpha$ and Theorem \ref{t7.1},
\begin{equation} \label{e8.6}
\big|\int \prod_{e\in\calh} g_e\, d\bmu -\int \prod_{e\in\calh} h_e\, d\bmu \big|\mik \delta.
\end{equation}
On the other hand, we have $0\mik g_e\mik f_e$ for every $e\in\calh$. Hence, by \eqref{e8.3} and \eqref{e8.6},
\[ \int\prod_{e\in\calh}h_e\,d\bmu \mik \int\prod_{e\in\calh}f_e\,d\bmu +
\big|\int\prod_{e\in\calh}h_e\, d\bmu - \int \prod_{e\in\calh} g_e\, d\bmu \big| \mik 2\delta. \]
Finally, by \eqref{e8.1}, we have $2\delta\mik \Delta(n,r,\frac{\ee}{6\zeta})^{n^r}$ and the proof is completed.
\end{proof}
Now for every $e\in\calh$ set $E_e=[h_e\meg \Delta(n,r,\frac{\ee}{6\zeta})]$. Since $|\calh|\mik {n\choose r}\mik n^r-1$
and $\Delta(n,r,\frac{\ee}{6\zeta})\mik 1$, by Claim \ref{c8.1} and Markov's inequality, we have
\[ \bmu\Big(\bigcap_{e\in\calh}E_e\Big) \mik \bmu\Big(\Big\{ \bx\in\bbx: \prod_{e\in\calh} h_e(\bx)
\meg \Delta\Big(n,r,\frac{\ee}{6\zeta}\Big)^{|\calh|}\Big\}\Big) \mik \Delta\Big(n,r,\frac{\ee}{6\zeta}\Big). \]
Thus, by Theorem \ref{t3.4}, for every $e\in \calh$ there exists $F_e\in\calb_e$ with
\begin{equation} \label{e8.7}
\bmu(E_e\setminus F_e) \mik \frac{\ee}{6\zeta} \ \text{ and } \ \bigcap_{e\in \calh} F_e=\emptyset.
\end{equation}
Moreover, by \eqref{e8.1}, there exists a collection $\langle\calp_{\! e'}: e'\subseteq e\text{ for some } e\in\calh \rangle$
of partitions of $\bbx$ such that: (i) $\calp_{\!e'}\subseteq \calb_{e'}$ and $|\calp_{\! e'}|\mik k$ for every $e'\subseteq e\in\calh$,
and (ii) for every $e\in\calh$ the set $F_e$ belongs to the algebra generated by the family $\bigcup_{e'\varsubsetneq e} \calp_{\! e'}$.
Therefore, the proof of the theorem will be completed once we show that
\begin{equation} \label{e8.8}
\int_{\bbx\setminus F_e} f_e \, d\bmu\mik \ee
\end{equation}
for every $e\in\calh$. To this end, fix $e\in\calh$ and notice that
\begin{equation} \label{e8.9}
\int_{\bbx\setminus F_e} f_e\, d\bmu \mik \int_{\bbx\setminus F_e} h_e\, d\bmu +
\big|\int_{\bbx\setminus F_e} (g_e-h_e)\, d\bmu\big|+\big|\int_{\bbx\setminus F_e} (f_e-g_e)\,d\bmu\big|.
\end{equation}
Next observe that, by the definition of $E_e$ and the fact that $0\mik h_e\mik \zeta$, we have
\begin{eqnarray} \label{e8.10}
\int_{\bbx\setminus F_e} h_e\, d\bmu & \mik & \int_{\bbx\setminus E_e} h_e\, d\bmu+\int_{E_e\setminus F_e} h_e\, d\bmu \\
& \mik & \Delta\Big(n,r,\frac{\ee}{6\zeta}\Big)+ \zeta\, \bmu(E_e\setminus F_e) \stackrel{\eqref{e8.7}}{\mik} \ee/3. \nonumber
\end{eqnarray}
To estimate the second term in the right-hand side of \eqref{e8.9}, let $\cala$ denote the algebra on $\bbx$ generated by the family
$\bigcup_{e'\varsubsetneq e} \calp_{\!e'}$ and note that every atom of $\cala$ is of the form $\bigcap_{e'\varsubsetneq e} A_{e'}$
where $A_{e'}\in \calp_{\! e'}$ for every $e'\varsubsetneq e$. It follows that the number of atoms of $\cala$ is
less than $k^{2^r}$ and, moreover, every atom of $\cala$ belongs to $\cals_{\partial e}$. In particular, there exists
a family $\calf\subseteq \cals_{\partial e}$ consisting of pairwise disjoint sets with $|\calf|\mik k^{2^r}$ and such that
$\bbx\setminus F_e=\bigcup \calf$. Therefore, by the fact that $\|g_e-h_e\|_{\cals_{\partial e}}\mik \alpha$
and the choice of $\alpha$, we have
\begin{equation} \label{e8.11}
\big| \int_{\bbx\setminus F_e} (g_e-h_e)\, d\bmu\big| \mik \sum_{A\in\calf} \big|\int_{A} (g_e-h_e)\,d\bmu\big|
\mik |\calf|\,\alpha \mik k^{2^{r}}\alpha \mik \varepsilon/3.
\end{equation}
Finally, to estimate the last term in the right-hand side of \eqref{e8.9}, notice that if $p=\infty$, then this term is equal to zero.
(Indeed, in this case we have $\zeta=C+1$ and $A_e=\bbx$.) On the other hand, if $1<p<\infty$, then, by Proposition \ref{p7.2} and
the choice of $\zeta$ and $\alpha$, we obtain that
\begin{eqnarray} \label{e8.12}
\big| \int_{\bbx\setminus F_e} (f_e-g_e)\,d\bmu \big| & = & \int_{\bbx\setminus F_e}f_e\cdot\mathbf{1}_{\bbx\setminus A_e}\,d\bmu
\mik \int_{\bbx\setminus A_e}\!\! f_e \,d\bmu \\
& \mik & (C+1)^p\zeta^{1-p}+\alpha \mik \ee/3. \nonumber
\end{eqnarray}

Combining \eqref{e8.9}--\eqref{e8.12} we conclude that \eqref{e8.8} is satisfied, and so the entire proof of Theorem \ref{t2.2} is completed.
\medskip

\noindent 8.3. \textbf{Consequences.} As we have noted, Theorem \ref{t2.2} yields a Szemer\'{e}di-type result for sparse pseudorandom
subsets of finite additive groups. (Recall that an \emph{additive group} is just an abelian group written additively.) The argument for
deducing this result is well-known---see, e.g., \cite{Go1,RSTT,So,Tao2}---and originates from the work of Ruzsa and Szemer\'{e}di \cite{RS}.

First we introduce some notation. If $A_1,\dots,A_d$ are nonempty finite sets and $f\colon A_1\times\cdots\times A_d\to\rr$
is a function, then by $\ave[f(x_1,\dots,x_d)\, |\, x_1\in A_1,\dots,x_d\in A_d]$ we shall denote the average of $f$, that is,
\[ \ave[f(x_1,\dots,x_d)\, |\, x_1\in A_1,\dots,x_d\in A_d]\coloneqq \frac{1}{|\prod_{i=1}^d A_i|}
\sum_{(x_1,\dots,x_d)\in \prod_{i=1}^d A_i}\!\!\!\!\!\!\!\!\!\!\!\!\! f(x_1,\dots,x_d). \]
The following relative multidimensional Szemer\'{e}di theorem is the main result in this subsection. It follows from
Theorem \ref{t2.2} arguing precisely as in the proof of \cite[Theorem 2.18]{Tao2}.
\begin{thm} \label{t8.2}
For every integer $k\meg 3$, every $C\meg 1$, every $1<p\mik\infty$ and every $0<\delta\mik 1$ there exist a positive integer
$N=N(k,C,p,\delta)$ and a strictly positive constant $c=c(k,C,p,\delta)$ with the following property. Let $Z,Z'$ be finite
additive groups and let $\langle \phi_i : i\in [k]\rangle$ be a collection of group homomorphisms from $Z$ into $Z'$ such that
the set $\{\phi_i(d)-\phi_j(d): i,j\in [k] \text{ and } d\in Z\}$ generates $Z'$. Consider the $(k-1)\text{-uniform}$ hypergraph
system $\mathscr{H}=(k,\langle(X_i,\mu_i):i\in [k]\rangle,\calh)$ where: $\emph{(a)} \ \calh={k\choose k-1}$,
and \emph{(b)} $(X_i,\mu_i)$ is the discrete probability space with $X_i=Z$ and $\mu_i$ the uniform probability measure
on $Z$ for every $i\in [k]$. Also let  $\nu\colon Z'\to \rr$ be a nonnegative function and for every $j\in [k]$ define
$\nu_{[k]\setminus\{j\}}\colon\bbx\to \rr$ by the rule
\begin{equation} \label{e8.13}
\nu_{[k]\setminus\{j\}}\big((x_i)_{i\in [k]}\big)=\nu\Big(\sum_{i\in [k]} \big(\phi_i(x_i)-\phi_j(x_i)\big) \Big).
\end{equation}
$($Here, we have $\bbx=X_1\times\cdots\times X_k)$. Assume that the family $\langle \nu_{[k]\setminus\{j\}}: j\in [k]\rangle$ is
$(C,N^{-1},p)$-pseudorandom and that $|Z|\meg N$. Then for every $f\colon Z'\to \rr$ with $0\mik f\mik \nu$ and\,
$\ave[f(x)\, |\, x\in Z']\meg\delta$ we have
\begin{equation} \label{e8.14}
\ave\Big[ \prod_{j\in [k]} f\big(a+ \phi_j(d)\big)\,\Big|\ a\in Z',d\in Z\Big]\meg c.
\end{equation}
\end{thm}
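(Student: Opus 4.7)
The plan is to deduce the theorem from the relative removal lemma (Theorem~\ref{t2.2}) via a Ruzsa--Szemer\'{e}di type encoding. For each $j\in [k]$ I would set
\[ f_{e_j}(\bx) := f\Big(\sum_{i\in [k]}\big(\phi_i(x_i)-\phi_j(x_i)\big)\Big); \]
the $i=j$ summand vanishes, so $f_{e_j}$ is $\calb_{e_j}$-measurable and, pointwise, $0\mik f_{e_j}\mik \nu_{e_j}$. Writing $a(\bx):=\sum_i\phi_i(x_i)$ and $d(\bx):=-\sum_i x_i$, one checks that $a(\bx)+\phi_j(d(\bx))=\sum_i(\phi_i-\phi_j)(x_i)$. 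The generating hypothesis on $\{\phi_i(d)-\phi_j(d):i,j\in [k],\,d\in Z\}$ implies that the group homomorphism $\bx\mapsto (a(\bx),d(\bx))$ is surjective from $Z^k$ onto $Z'\times Z$ with equal-size fibres, so the pushforward of $\bmu$ is the product uniform measure. Consequently
\[ \int_{\bbx}\prod_{j\in [k]}f_{e_j}\,d\bmu=\ave_{a\in Z',\,d\in Z}\prod_{j\in [k]}f(a+\phi_j(d)), \]
and the task reduces to lower-bounding the left-hand side by a positive constant $c(k,C,p,\delta)$.

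I would then argue by contradiction using Theorem~\ref{t2.2}. Set $\ee:=\delta/(2k)$, and let $c:=\delta(k,k-1,C,p,\ee)$ and $N:=\lceil\eta(k,k-1,C,p,\ee)^{-1}\rceil$ be the values produced by Theorem~\ref{t2.2}. Since every $X_i=Z$ has $|Z|\meg N$, the hypergraph system $\mathscr{H}$ is $\eta(k,k-1,C,p,\ee)$-nonatomic. Assuming $\int\prod_j f_{e_j}\,d\bmu<c$, Theorem~\ref{t2.2} delivers sets $F_{e_j}\in\calb_{e_j}$ with $\int_{\bbx\setminus F_{e_j}}f_{e_j}\,d\bmu\mik \ee$ for every $j$ and $\bigcap_{j\in [k]}F_{e_j}=\emptyset$. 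The desired contradiction will come from a diagonal argument on the subgroup $T:=\{\bx\in Z^k:\sum_i x_i=0\}$, which has $\bmu(T)=1/|Z|$.

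The key point, and what I expect to be the principal technical obstacle, is a scale mismatch between $\bmu$ on $\bbx=Z^k$ and $\bmu_{e_j}$ on $\bbx_{e_j}=Z^{k-1}$: the naive covering estimate only yields $\sum_j\int_{\bbx\setminus F_{e_j}}f_{e_j}\,d\bmu\meg \delta/|Z|$, which is too weak when $|Z|$ is large. This is resolved by two observations. First, on $T$ we have $d(\bx)=0$, so $f_{e_j}(\bx)=f(a(\bx))$ is independent of $j$; moreover the generating hypothesis forces $a(\bx)$ to be uniform on $Z'$ as $\bx$ ranges uniformly in $T$, giving $\int_T f(a(\bx))\,d\bmu=\delta/|Z|$. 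Second, for each $j$ the projection $\pi_{e_j}|_T\colon T\to \bbx_{e_j}$ is a bijection---any $(x_i)_{i\ne j}$ has the unique trivial completion $x_j=-\sum_{i\ne j}x_i$---and pushes the normalised restriction of $\bmu$ to $T$ onto $\bmu_{e_j}$. Because both $\mathbf{1}_{\bbx\setminus F_{e_j}}$ and $f_{e_j}$ factor through $\pi_{e_j}$, this yields the fibre identity
\[ \int_{T\cap(\bbx\setminus F_{e_j})}f_{e_j}\,d\bmu=\frac{1}{|Z|}\int_{\bbx\setminus F_{e_j}}f_{e_j}\,d\bmu, \]
which amplifies the trivial weight by the missing factor of $|Z|$.

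Combining everything, the condition $\bigcap_j F_{e_j}=\emptyset$ gives $\sum_j\mathbf{1}_{\bbx\setminus F_{e_j}}(\bx)\,f_{e_j}(\bx)\meg f(a(\bx))$ for every $\bx\in T$; integrating over $T$ and invoking the fibre identity produces
\[ \frac{1}{|Z|}\sum_{j\in [k]}\int_{\bbx\setminus F_{e_j}}f_{e_j}\,d\bmu\meg \frac{\delta}{|Z|}, \]
whence $\sum_{j\in [k]}\int_{\bbx\setminus F_{e_j}}f_{e_j}\,d\bmu\meg \delta$, contradicting the bound $\mik k\ee=\delta/2$ supplied by Theorem~\ref{t2.2}.
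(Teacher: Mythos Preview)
Your argument is correct and is precisely the Ruzsa--Szemer\'{e}di/Tao deduction the paper points to (the paper does not give its own proof, only the reference to \cite[Theorem 2.18]{Tao2}). One cosmetic slip: the hypothesis is $\ave[f]\meg\delta$, so $\int_T f(a(\bx))\,d\bmu\meg\delta/|Z|$ rather than equality---but you use the inequality correctly in the final step, so nothing is affected.
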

We close this subsection by discussing in more detail the pseudorandomness hypotheses appearing in Theorem \ref{t8.2} for the important
special case of the cyclic group $\mathbb{Z}_n\coloneqq \mathbb{Z}/n\mathbb{Z}$. Specifically,
let $k,C$ and $p$ be as in Theorem \ref{t8.2} and let $0<\eta\mik 1$. Also let $n$ be a positive integer. We say that a function
$\nu\colon\mathbb{Z}_n\to \rr$ is \emph{$(k,C,\eta,p)$-pseudorandom} if it is nonnegative and satisfies the following conditions.
\begin{enumerate}
\item[(C1)] If $\ell$ is as in \eqref{e6.4}, then we have
\begin{multline*}
1-\eta\mik\ave\Big[ \prod_{j=1}^k \prod_{\omega \in\{0,\dots,\ell-1\}^{[k]\setminus \{j\}}}
\nu\big(\sum_{i=1}^k(i-j)\,x_i^{(\omega_i)}\big)^{n_{j,\omega}} \\
\Big| \, x_1^{(0)},\dots, x_1^{(\ell-1)}, \dots, x_k^{(0)},\dots, x_k^{(\ell-1)}\in \mathbb{Z}_n\Big] \mik C+\eta
\end{multline*}
for any choice of\, $n_{j,\omega}\in\{0,1\}$.
\item[(C2)] For every $j\in [k]$ there exists $\boldsymbol{\psi}_j\colon\mathbb{Z}_n^{[k]\setminus\{j\}}\to \rr$
with $\ave\big[|\bpsi_j|^p\big]\mik C^p$ and
\begin{multline*}
\!\!\!\!\Big|\ave\Big[ \prod_{\omega\in\{0,1\}^{[k]\setminus \{j\}}} \Big(\nu\big(\sum_{i=1}^k (i-j)\,x_i^{(\omega_i)}\big)-
\boldsymbol{\psi}_j\big((x_i^{(\omega_i)})_{i\in [k]\setminus\{j\}}\big)\Big) \, \times \\
\ \ \ \ \times\!\! \prod_{j'\in [k]\setminus\{j\}}\prod_{\omega\in\{0,1\}^{[k]\setminus \{j'\}}}\!\!\! \nu\big(\sum_{i=1}^k (i-j')\,x_i^{(\omega_i)}
\big)^{n_{j'\!,\omega}} \,\Big|\, x_1^{(0)}, x_1^{(1)}, \dots, x_k^{(0)}, x_k^{(1)} \in \mathbb{Z}_n\Big]\Big| \mik \eta
\end{multline*}
for any choice of\, $n_{j'\!,\omega}\in\{0,1\}$.
\end{enumerate}
By Proposition \ref{p6.4} and Theorem \ref{t8.2}, we have the following theorem.
\begin{thm} \label{t8.3}
Let $k\in\nn$ with $k\meg 3$, $C\meg 1$ and $1<p\mik\infty$, and let $0<\delta\mik 1$. Also let $N=N(k,C,p,\delta)$ and
$c=c(k,C,p,\delta)$ be as in Theorem \emph{\ref{t8.2}} and set
\[ \eta=\big((C+1)N\big)^{-2^{k-1}}. \]
If\, $n\in\nn$ with $n\meg N$ and $\nu\colon\mathbb{Z}_n\to \rr$ is $(k,C,\eta,p)$-pseudorandom, then for every $f\colon\mathbb{Z}_n\to \rr$
with $0\mik f\mik \nu$ and\, $\ave[f(x)\,|\,x\in\mathbb{Z}_n]\meg \delta$ we have
\begin{equation} \label{e8.15}
\ave\Big[\prod_{j\in [k]}f\big(a+ j d\big)\,\Big|\ a, d\in\mathbb{Z}_n\Big]\meg c.
\end{equation}
\end{thm}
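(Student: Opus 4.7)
The plan is to deduce Theorem \ref{t8.3} as a direct specialization of Theorem \ref{t8.2}, with the pseudorandomness hypothesis of Theorem \ref{t8.2} verified through Proposition \ref{p6.4}.

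First I would set up the hypergraph system $\mathscr{H}$ required by Theorem \ref{t8.2} by taking $Z=Z'=\mathbb{Z}_n$, giving each $X_i=\mathbb{Z}_n$ the uniform probability measure, letting $\calh={[k]\choose k-1}$, and choosing the homomorphisms $\phi_j\colon\mathbb{Z}_n\to\mathbb{Z}_n$ to be $\phi_j(x)=jx$ for $j\in[k]$. The generating condition of Theorem \ref{t8.2} is trivially met, since taking $i=2$, $j=1$ yields $(i-j)d=d$, which already ranges over all of $\mathbb{Z}_n$. Under these choices, formula \eqref{e8.13} unwinds to
\[ \nu_{[k]\setminus\{j\}}\big((x_i)_{i\in[k]}\big)=\nu\Big(\sum_{i=1}^k(i-j)x_i\Big), \]
which is precisely the function appearing in conditions (C1)--(C2) preceding Theorem \ref{t8.3}.

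Next, I would observe that (C1) and (C2) of Theorem \ref{t8.3} are the verbatim translations of properties (P1) and (P2) of Proposition \ref{p6.4} for the hypergraph system above, with the auxiliary $\bpsi_j$ playing the role of the majorants $\psi_e$ of Proposition \ref{p6.4} for $e=[k]\setminus\{j\}$. The only point to check is that the coordinate $\omega_j$ in the summand $\sum_{i=1}^k(i-j)x_i^{(\omega_i)}$ may be left arbitrary (because the $i=j$ term vanishes), so one is free to reindex $\omega\in\{0,\dots,\ell-1\}^{[k]\setminus\{j\}}$ into $\omega\in\{0,\dots,\ell-1\}^{[k]}$ as needed. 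Proposition \ref{p6.4} then yields that $\langle\nu_{[k]\setminus\{j\}}:j\in[k]\rangle$ is $(C,\eta',p)$-pseudorandom with $\eta'=(C+1)\eta^{1/2^{k-1}}$, where here $r=k-1$. The choice $\eta=\big((C+1)N\big)^{-2^{k-1}}$ is calibrated precisely so that $\eta'=1/N$, matching the pseudorandomness hypothesis of Theorem \ref{t8.2}.

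Finally, the hypothesis $n\meg N$ gives $|Z|\meg N$, so Theorem \ref{t8.2} applies to any $f\colon\mathbb{Z}_n\to\rr$ with $0\mik f\mik\nu$ and $\ave[f(x)\mid x\in\mathbb{Z}_n]\meg\delta$, producing
\[ \ave\Big[\prod_{j\in[k]}f\big(a+\phi_j(d)\big)\,\Big|\,a,d\in\mathbb{Z}_n\Big]\meg c; \]
substituting $\phi_j(d)=jd$ this is exactly \eqref{e8.15}. I do not anticipate any genuine obstacle: the whole argument is a careful matching of indices between the cyclic-group pseudorandomness conditions (C1)--(C2) and the hypergraph-system properties (P1)--(P2), followed by an immediate appeal to Theorems \ref{t8.2} and Proposition \ref{p6.4}.
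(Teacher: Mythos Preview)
Your proposal is correct and matches the paper's approach exactly: the paper simply states that Theorem \ref{t8.3} follows ``by Proposition \ref{p6.4} and Theorem \ref{t8.2}'' without further detail, and you have filled in precisely the intended verification (including the calibration $\eta'=(C+1)\eta^{1/2^{k-1}}=1/N$).
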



\end{document}